\documentclass[11pt, reqno, twoside, letterpaper]{amsart}

\usepackage[
 letterpaper, twoside,
 inner=1.35in, outer=1.40in,
 top=1.25in,  bottom=1.25in,
 headsep=16pt, footskip=30pt,
]{geometry}

\usepackage{amsmath}
\usepackage{mathtools}

\usepackage{libertinus}
\usepackage{microtype}

\makeatletter
\g@addto@macro\normalsize{%
 \setlength\abovedisplayskip{13pt plus 3pt minus 4pt}%
 \setlength\belowdisplayskip{13pt plus 3pt minus 4pt}%
 \setlength\abovedisplayshortskip{0pt plus 3pt}%
 \setlength\belowdisplayshortskip{9pt plus 3.5pt minus 3pt}%
}
\makeatother

\usepackage{xcolor}
\usepackage{enumitem}
\usepackage{etoolbox}
\usepackage{csquotes}    

\usepackage{cite}

\usepackage{pdflscape}
\usepackage{array}
\usepackage{booktabs}
\usepackage{ragged2e}
\usepackage{tabularx}
\usepackage{graphicx}
\usepackage{eso-pic}

\newcolumntype{L}[1]{>{\RaggedRight\arraybackslash}p{#1}}
\newcolumntype{C}[1]{>{\Centering\arraybackslash}p{#1}}
\newcolumntype{Z}{>{\RaggedRight\arraybackslash}X}

\usepackage{fancyhdr}
\fancypagestyle{landscapetable}{%
  \fancyhf{}
  \fancyfoot[C]{\thepage}
  
}

\newcommand{\landscapepagenumber}{%
  \AddToShipoutPictureFG*{%
    \AtPageLowerLeft{%
      \put(\LenToUnit{\dimexpr\paperwidth - 12mm\relax},%
           \LenToUnit{0.5\paperheight}){%
        \rotatebox{90}{\makebox[0pt]{\normalfont\normalsize\thepage}}%
      }%
    }%
  }%
}

\numberwithin{equation}{section}

\theoremstyle{plain}
\newtheorem{X}{X}[section]
\newtheorem{theorem}[X]{Theorem}
\newtheorem{lemma}[X]{Lemma}
\newtheorem{corollary}[X]{Corollary}
\newtheorem{proposition}[X]{Proposition}
\newtheorem*{theorem*}{Theorem}

\theoremstyle{definition}

\theoremstyle{remark}
\newtheorem{remark}[X]{Remark}
\newtheorem*{remark*}{Remark}

\allowdisplaybreaks[1]

\renewcommand{\le}{\leqslant}
\renewcommand{\ge}{\geqslant}

\newcommand{\proofstep}[1]{%
 \par\medskip
 \noindent\emph{#1.}%
}

\makeatletter
\apptocmd{\thebibliography}{%
 \raggedright
 \@rightskip=\z@ \@plus 3em
 \rightskip=\@rightskip
 \parfillskip=\z@ \@plus 1fil
 \frenchspacing
}{}{\PackageWarning{preamble}{Could not patch thebibliography}}
\makeatother

\makeatletter
\patchcmd{\@tocline}
 {\hfil}
 {\leaders\hbox{$\m@th\mkern 4.5mu\hbox{.}\mkern 4.5mu$}\hfill}
 {}{\PackageWarning{preamble}{Could not patch \string\@tocline}}
\makeatother

\usepackage{hyperref}
\hypersetup{
 unicode=true,
 pdftitle={A tyro's approach to the tilted sieve: beyond the Erdős–Rankin bound},
 pdfauthor={Tristan Freiberg},
 pdfsubject={Number theory},
 pdfstartview={FitH},
 pdfmenubar=false,
 pdffitwindow=false,
 pdfnewwindow=true,
 bookmarksnumbered=true,
 linktoc=all,
 colorlinks=true,
 linkcolor={black},
 citecolor={black},
 filecolor={black},
 urlcolor={black},
}
\newcommand{\DOI}[1]{\href{https://doi.org/#1}{doi:#1}}

\title[A tyro's approach to the tilted sieve]{%
A tyro's approach to the tilted sieve:\\[0.4ex]
Beyond the Erd\H{o}s--Rankin bound}
\author[T. Freiberg]{Tristan Freiberg}
\address{Montr\'eal, Canada}
\thanks{The use of AI in developing the proof and preparing the exposition is described in Appendix~\ref{app:ai-provenance}}
\subjclass[2020]{Primary 11N35, 11N05; Secondary 11N36}
\keywords{Erd\H{o}s--Rankin sieve, tilted sieve, large prime gaps, residue-class coverings, probabilistic method}
\date{\today}

\begin{document}

\begin{abstract}
We give an elementary exposition of the tilted sieve introduced by GPT-5.6~Sol \cite{GPT2026}, showing that one residue class modulo each prime $p \le x$ can cover an interval of length
\begin{equation*}
\gg \frac{x\log x}{(\log_{2} x)\log_{3} x}.
\end{equation*}
This improves the classical Erd\H{o}s--Rankin bound by a factor of $(\log_{2} x)/(\log_{3} x)^{2}$. Although weaker than the strongest known bounds, it shows what the tilt and its associated covering of composite survivors achieve without Maynard sieve weights or a hypergraph covering theorem. The proof uses the prime number theorem, Mertens' reciprocal-prime formula, and elementary probability. The appendices provide a historical survey and self-contained proofs of the classical Erd\H{o}s--Rankin bound.
\end{abstract}

\maketitle

\tableofcontents
\clearpage

\section{Introduction}
\label{sec:intro}

Let $Y(x)$ be the largest integer $y$ for which one can choose one residue class $a_{p} \bmod p$ for each prime $p \le x$ whose union contains $\{1, \ldots, y\}$. The Chinese remainder theorem converts such a covering into an interval of consecutive composite integers, so lower bounds for $Y(x)$ give lower bounds for large gaps between primes. Rankin \cite{RAN1938} proved that
\begin{equation}
\label{eq:intro-rankin-covering-bound}
Y(x) \ge cx\frac{(\log x)\log_{3} x}{(\log_{2} x)^{2}}
\end{equation}
for all sufficiently large $x$, with any fixed $0 < c < 1/3$. Here $\log_{j} = \log(\log_{j - 1})$ denotes the $j\,$th iterated logarithm.

For more than seventy years, improvements concerned the constant $c$. Work of Sch\"onhage \cite{SCH1963}, Rankin \cite{RAN1963}, Maier and Pomerance \cite{MP1990}, and Pintz \cite{PIN1997} eventually allowed any constant $c < 2e^{\gamma}$ ($= 3.56214\ldots$). Maynard \cite{MAY2016} and Ford, Green, Konyagin and Tao \cite{FGKT2016} independently showed that $c$ could be taken arbitrarily large. Their subsequent joint work \cite{FGKMT2018} improved Rankin's bound by a factor of $\log_{2} x$. More recently, GPT-5.6~Sol \cite{GPT2026} obtained a further factor of $(\log_{2} x)/(\log_{3} x)^{2}$ by introducing a tilted preliminary sieve.

The full argument in \cite{GPT2026} combines this tilt with Maynard sieve weights and a hypergraph covering theorem. Our purpose is to show how much can be gained from the tilt and its associated covering of composite survivors alone. We give a self-contained treatment of this part of the construction, using only the prime number theorem and Mertens' reciprocal-prime formula as external analytic inputs.

\begin{theorem}
\label{thm:tilted-covering-bound}
For all sufficiently large $x$, 
\begin{equation*}
Y(x) \gg \frac{x\log x}{(\log_{2} x)\log_{3} x}.
\end{equation*}
The implicit constant is absolute.
\end{theorem}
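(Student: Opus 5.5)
The plan is a three-stage covering of $[1,y]$ with $y = c\,x\log x/((\log_{2} x)\log_{3} x)$, where $c>0$ is a small absolute constant. The shape is Erd\H{o}s--Rankin: a deterministic stage, then a random stage, then a greedy clean-up by the large primes. The tilt enters only in the random stage. Fix parameters
\begin{equation*}
w = \exp\bigl(K(\log_{2} x)\log_{3} x\bigr),
\qquad
z = x^{\delta},
\end{equation*}
with $K$ a large and $\delta$ a small absolute constant.

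\emph{Stage~1 (deterministic).} Take $a_{p} = 0$ for all $p \le w$ and for all $z < p \le x/2$. An integer $n \le y$ escapes these classes exactly when every prime factor of $n$ lies in $(w,z] \cup (x/2, y]$. Since $y/(x/2) < w$ and the small cofactor would need a prime factor at most $w$, the survivors are of two kinds:
\begin{itemize}[leftmargin=2em]
\item the primes in $(x/2, y]$, of which there are about $y/\log x$ by the prime number theorem;
\item the integers $n \le y$ all of whose prime factors lie in $(w,z]$ (the composite survivors).
\end{itemize}

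\emph{Stage~2 (tilted random stage).} For each prime $p \in (w,z]$, choose $a_{p}$ at random. With uniform classes, Mertens gives an expected number of surviving large primes of about
\begin{equation*}
\frac{y}{\log x}\prod_{w<p\le z}\Bigl(1-\frac1p\Bigr) \asymp \frac{y\log w}{\log x\,\log z}
= \frac{cK}{\delta}\cdot\frac{x}{\log x},
\end{equation*}
which is below $\tfrac14\,x/\log x$ once $c$ is small in terms of $K$ and $\delta$. The difficulty is the composite survivors. Their count, estimated by Rankin's trick with exponent $\sigma = 1 - \eta/\log z$, is $y\exp(-(1+o(1))u\log u)$ with $u$ bounded. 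This is far too many for the last stage unless most of them are also hit in Stage~2.

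So I will not use uniform classes. Instead, $a_{p}$ is chosen with a tilted distribution: the class $a$ is weighted by $e^{\lambda N_{p}(a)}$, where $N_{p}(a)$ is the number of composite survivors from Stage~1 lying in $a \bmod p$. The tilt parameter $\lambda$ is chosen so that the tilt is mild. Then the large-prime survivors still behave roughly as under the uniform choice, and in particular the expected number of survivors among the primes in $(x/2,y]$ is still dominated by a harmless constant times the uniform estimate. At the same time, each composite survivor $n = p_{1}\cdots p_{k}$ gets $k \ge 2$ chances, each with probability boosted above $1/p_{i}$.

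The key estimate will be that the expected number of composite survivors after Stage~2 is at most $\tfrac14\,x/\log x$. I would prove this by expanding the survival probability of each $n$ as a product over its prime factors. I would then bound the resulting sum over $(w,z]$-composed $n$ using Mertens, grouping $n$ by $k$ and using $\sum_{w<p\le z} 1/p = \log(\log z/\log w) \approx \log_{2} x$. The saving of $\log_{3} x$ comes from the choice of $\log w$, which makes the $k$-fold sums decay like $(C/k)^{k}$.

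\emph{Stage~3 (greedy clean-up).} By Markov's inequality, some choice in Stage~2 leaves at most $\tfrac12\,x/\log x$ uncovered integers in $[1,y]$. The primes in $(x/2, x]$ have not been used, and there are at least about $x/(2\log x)$ of them. Assign each one a class hitting a distinct remaining survivor, with a slight adjustment of constants if needed. This covers $[1,y]$ and gives $Y(x) \ge y$.

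The main obstacle is the composite-survivor estimate in Stage~2. Two things must hold at once: the tilt must genuinely increase the hitting probability of composite survivors, and it must not reduce the hitting probability of the primes in $(x/2,y]$ by more than a constant factor. I would first try $\lambda$ of size about $1/(\text{mean of } N_{p})$, and control the variance of $N_{p}(a)$ over the classes $a$ by a second-moment computation. If that fails, I would fall back on an iterative tilt applied in dyadic blocks of primes.
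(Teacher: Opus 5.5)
There is a genuine gap, and it is exactly at the step you flag: Stage~2 cannot bring the composite survivors down to $O(x/\log x)$.

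With $z=x^{\delta}$ the quantity $u\approx 1/\delta$ is bounded, so Rankin's trick gives no saving. The integers $n\le y$ composed of primes in $(w,z]$ therefore number $\asymp y/\log w$; the only saving is the roughness factor. Under uniform classes each of them survives with probability $\asymp \log w/\log z$, just like the primes. That leaves $\asymp y/\log z \asymp x/((\log_{2} x)\log_{3} x)$ composites, too many by a factor of about $\log x/((\log_{2} x)\log_{3} x)$.

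Your mild tilt does not change this. If every class modulo $p$ keeps probability $O(1/p)$, a composite $n=p_{1}\cdots p_{k}$ gains extra hitting probability only $O(\sum_{i}1/p_{i})=O(k/w)$ through its zero classes. This is negligible because every $p_{i}>w$. At primes $p\nmid n$, its class is hit at essentially the same rate as the class of a prime in $(x/2,y]$.

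More generally, and at least heuristically, keeping the prime survivors within a constant factor of the uniform count forces $\sum_{w<p\le z}\mu_{p}(0)/p=O(1)$. Only a proportion $O_{\delta}(1/p)$ of the composites is divisible by $p$, so the zero classes then remove just a bounded extra fraction of them. Hence $\gg y/\log z$ composites survive whatever Stage-2 distribution you use. The asserted $(C/k)^{k}$ decay has no mechanism behind it. As long as every survivor must receive its own reserve prime, you are in the setting where the known constructions stop at Rankin's scale.

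The missing idea is the one the paper is built on: accept many more composite survivors than reserve primes, and cover them many at a time. The paper tilts strongly toward zero at every $p\in(w,x/2]$, with $w=(\log x)^{100}$ and no second clamping range. It takes $\beta_{p}/B_{p}=p^{-\tau}$, where $\tau=t/\log x$ and $t=2\log_{3} x+2\log_{4} x$. This gives three facts:
\begin{itemize}
\item a squarefree $w$-rough composite $n\in(T,H]$ survives with probability $An^{-\tau}\approx Ae^{-t}$;
\item primes above $x/2$ survive with probability $A\asymp t\log_{2} x/\log x$;
\item joint survival obeys $\Pi(\mathcal{D})\approx A^{\#\mathcal{D}}\operatorname{rad}\bigl(\prod_{n\in\mathcal{D}}n\bigr)^{-\tau}$.
\end{itemize}
Even so, about $xe^{-t}$ composites survive, far more than $x/\log x$. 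Raising $t$ to about $\log_{2} x$ to remove them would make the $AH/\log H$ prime survivors exceed the supply of reserve primes, so the tilt alone does not suffice either.

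The paper instead covers the composites with the primes $q\in(x/2,3x/4]$. Each $q$ selects a whole class $E_{q,a}$ of at most $U\asymp H/x$ candidates, with probability proportional to $\mathbf{1}_{\{E_{q,a}\subseteq\mathcal{N}\}}/\Pi(E_{q,a})$. The joint survival law, the common-factor averages and the two second-moment bounds then show that almost every surviving candidate receives selection mass at least $M/(8Q)\asymp(\log_{2} x)\log_{3} x$. Such a candidate escapes with probability at most $\exp(-M/(8Q))$, which is enough to offset the $\asymp xe^{-t}$ candidates. Only the primes, the integers up to $T$ and the nonsquarefree exceptions are mopped up individually, using the primes in $(3x/4,x]$. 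Without such a group-covering stage, your outline cannot reach the stated bound.
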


Write $p_{n}$ for the $n$th prime and
\begin{equation*}
G(X) := \max_{p_{n + 1} \, \le \, X}(p_{n + 1} - p_{n}).
\end{equation*}
The Chinese remainder theorem and the prime number theorem give the following consequence.

\begin{corollary}
\label{cor:tilted-prime-gap-bound}
For all sufficiently large $X$,
\begin{equation*}
G(X) \gg \frac{(\log X)\log_{2} X}{(\log_{3} X)\log_{4} X}.
\end{equation*}
The implicit constant is absolute.
\end{corollary}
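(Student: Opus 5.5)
The statement to prove is Corollary~\ref{cor:tilted-prime-gap-bound}, and I will deduce it from Theorem~\ref{thm:tilted-covering-bound} by the standard Chinese remainder argument. Let $X$ be large and choose $x$ as large as possible subject to
\begin{equation*}
P(x) := \prod_{p \le x} p \le X/(2x\log x).
\end{equation*}
By the prime number theorem, $\log P(x) = \theta(x) = (1+o(1))x$. Hence $x \sim \log X$, and therefore $\log_{2} x \sim \log_{2} X$, $\log_{3} x \sim \log_{3} X$, and $\log_{4} x \sim \log_{4} X$. The quantity $\log(2x\log x)$ is $O(\log_{2} X)$, so it only perturbs $x$ by a factor $1+o(1)$.

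Set $y = Y(x)$ and take residue classes $a_{p} \bmod p$, $p \le x$, whose union contains $\{1,\dots,y\}$. By the Chinese remainder theorem there is an integer $m$ with $P(x) < m \le 2P(x)$ and $m \equiv -a_{p} \pmod p$ for every $p \le x$. For $1 \le j \le y$ there is a prime $p \le x$ with $j \equiv a_{p} \pmod p$, and then $p \mid m + j$. Since $m + j > P(x) \ge p$, each $m+j$ with $1 \le j \le y$ is composite.

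Next I bound where this gap sits. We have $y = Y(x) \le P(x)$ trivially, or more usefully $y \ll x\log x$, which follows from the theorem's order of magnitude together with the trivial bound, at least for the construction we use. Either way $m + y + 1 \le 3P(x) \le X$ for large $X$. Let $p_{n}$ be the largest prime $\le m$ and $p_{n+1}$ the next prime. By Bertrand's postulate $p_{n+1} \le 2(m+y+1)$, so it suffices to have ensured $P(x) \le X/6$, which is easily arranged in the choice of $x$. Then $p_{n+1} \le X$ and $p_{n+1} - p_{n} \ge y + 1$.

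Consequently $G(X) \ge Y(x)$. Applying Theorem~\ref{thm:tilted-covering-bound} and substituting $x \asymp \log X$ gives
\begin{equation*}
G(X) \gg \frac{x\log x}{(\log_{2} x)\log_{3} x} \gg \frac{(\log X)\log_{2} X}{(\log_{3} X)\log_{4} X},
\end{equation*}
with an absolute implied constant.

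The only mildly delicate point is keeping $p_{n+1}\le X$. To avoid needing any upper bound on $Y(x)$, I use the simplest choice: $P(x) \le X/6$, and replace $y$ by $\min(Y(x), P(x))$. This is harmless, since $P(x)$ vastly exceeds the target bound.
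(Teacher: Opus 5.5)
Your proposal is correct and is essentially the paper's proof. The paper takes $x := (\log X)/2$ instead of your maximal $x$ with $P(x) \le X/(2x\log x)$, chooses $P(x) \le m < 2P(x)$ by the Chinese remainder theorem, works with the constructed covering of length $H = o(P(x))$, and uses Bertrand's postulate to get $p_{+} \le 2(m + H) < 6P(x) < X$.

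Two small slips need fixing. First, $x \sim \log X$ gives $\log_{j} x \sim \log_{j + 1} X$, not $\log_{j} x \sim \log_{j} X$; your final display does use the correct shift. Second, Theorem~\ref{thm:tilted-covering-bound} gives no upper bound on $Y(x)$, so the aside $y \ll x\log x$ should rest instead on $Y(x) < P(x)$, on the constructed length $H \ll x\log x$, or on your $\min(Y(x), P(x))$ device.
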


Theorem~\ref{thm:tilted-covering-bound} improves the scale of \eqref{eq:intro-rankin-covering-bound} by the unbounded factor $(\log_{2} x)/(\log_{3} x)^{2}$. For comparison, the three covering bounds are
\begin{equation*}
\begin{aligned}
Y(x) &\gg \frac{x\log x}{(\log_{2} x)\log_{3} x} && \text{(this paper)}, \\[1ex]
Y(x) &\gg \frac{x(\log x)\log_{3} x}{\log_{2} x} && \text{\cite{FGKMT2018}}, \\[1ex]
Y(x) &\gg \frac{x\log x}{\log_{3} x} && \text{\cite{GPT2026}}.
\end{aligned}
\end{equation*}
Our bound is weaker than the latter two by factors of $(\log_{3} x)^{2}$ and $\log_{2} x$, respectively. Its interest lies in the simpler argument and in what it reveals about the contribution of the tilt.

The Erd\H{o}s--Rankin construction uses some of the primes to cover most of the interval, leaving sufficiently few integers that each can be assigned a distinct unused prime. In a probabilistic formulation, it is enough that the expected number of survivors be no greater than the number of primes held in reserve. A way to go further is to make a reserve prime cover several survivors at once. Maier and Pomerance \cite{MP1990}, and later Pintz \cite{PIN1997}, achieved this by covering suitable pairs. The work of Ford, Green, Konyagin, Maynard and Tao \cite{FGKMT2018} uses sieve weights to find residue classes containing many prime survivors, and a hypergraph covering theorem to coordinate their selection. This allows the number covered per reserve prime to tend to infinity.

The tilted sieve of \cite{GPT2026} makes composite survivors available for a different kind of group covering. The preliminary choices are biased toward the zero residue class, with a bias that varies with the prime. For the principal composite candidates, the resulting survival probabilities have a simple power law, and the probabilities that several candidates survive together can also be controlled. The preliminary sieve can therefore leave more survivors than there are reserve primes: the useful information is their joint survival law, as well as their number.

We use this information to choose residue classes containing whole groups of composite survivors. Following \cite{GPT2026}, we consider candidate groups whose members all survive the preliminary sieve, weighting each by the reciprocal of its probability of surviving in full. Estimates for the joint survival probabilities then show that these choices cover almost all the composite survivors. At the interval length considered here, an entire residue class of composite candidates is a sufficiently small group, so the subdivision into blocks used in \cite{GPT2026} is unnecessary. The prime survivors and the remaining exceptions are covered individually. Thus the improvement over Rankin's bound already appears before one uses any method for covering primes in groups. 

In the classical construction, the zero residue choices force composite survivors to be smooth, and a smooth-number estimate bounds their number. Here the small-prime zero choices make survivors rough, while the tilt controls their joint survival probabilities. These probabilities allow us to cover composite survivors in groups even when there are too many to cover individually.

Finally, the strongest prime-gap bound need not come from the strongest residue-class covering. OpenAI \cite{OAI2026}, in a proof attributed to GPT-6~Astra, obtains
\begin{equation*}
G(X) \gg \frac{(\log X)(\log_{2} X)^{2}\log_{4} X}{(\log_{3} X)^{2}}.
\end{equation*}
The argument first imposes congruences modulo the primes less than or equal to $x$, then finds a short translate by a multiple of their product in which all remaining integers are composite. Primes larger than $x$ provide the additional divisors. This gives a stronger bound for $G(X)$, but does not give a corresponding improvement for $Y(x)$, whose covering primes are restricted to $p \le x$.

\subsection*{Acknowledgments}
\label{subsec:acknowledgments}

The author thanks Jori Merikoski for locating and making available a copy of Westzynthius's 1931 dissertation \cite{WES1931}.

\section{Notation}
\label{sec:notation}

The $n$th prime is denoted by $p_{n}$, and $\pi(x)$ is the number of primes less than or equal to $x$. Sums and products indexed by $p$, $q$, $r$ or $\ell$ run over primes unless otherwise indicated. For positive integers $a$ and $b$, $(a, b)$ denotes their greatest common divisor. We write $\#\mathcal{D}$ for the cardinality of a finite set $\mathcal{D}$ of integers, and $\mathcal{D}\bmod p$ for the set of residue classes modulo $p$ represented by its members. Empty sums and products have values $0$ and $1$, respectively.

For a positive integer $n$, the number of distinct prime divisors is denoted by $\omega(n)$, and their product by $\operatorname{rad}(n)$; thus $\omega(1) = 0$ and $\operatorname{rad}(1) = 1$. A positive integer is $w$-rough if all its prime factors exceed $w$, and $z$-smooth if all its prime factors are less than or equal to $z$. The integer $1$ satisfies both conditions.

We write $\log_{j} = \log(\log_{j - 1})$ for the $j$th iterated logarithm. The Euler--Mascheroni constant is denoted by $\gamma = 0.577215\ldots$.

The covering function $Y(x)$ is the largest integer $y$ for which one can choose a residue class $a_{p} \bmod p$ for every prime $p \le x$ such that
\begin{equation*}
\{1, \ldots, y\} \subseteq \bigcup_{p \, \le \, x}(a_{p} + p\mathbb{Z}).
\end{equation*}
The largest gap between consecutive primes less than or equal to $X$ is
\begin{equation*}
G(X) := \max_{p_{n + 1} \, \le \, X}(p_{n + 1} - p_{n}).
\end{equation*}

We write $\mathbb{P}(F)$ for the probability of an event $F$, $\mathbf{1}_{F}$ for its indicator, and $\mathbb{E}Z$ and $\operatorname{Var}(Z)$ for the expectation and variance of a random variable $Z$. Conditional probabilities and expectations are written $\mathbb{P}(F \mid E)$ and $\mathbb{E}(Z \mid E)$. The underlying probability space is specified in each argument. A probability distribution $\mu_{p}$ on $\mathbb{Z}/p\mathbb{Z}$ is identified with its mass function, and $\mu_{p}(n)$ means its value at the residue class of the integer $n$ modulo $p$.

In the construction, $\mathcal{N}$ denotes the set of preliminary survivors, while $\mathcal{C}$ is the deterministic set of composite candidates. For $n \in \mathcal{C}$ and $\mathcal{D} \subseteq \mathcal{C}$, we write
\begin{equation*}
q_{n} := \mathbb{P}(n \in \mathcal{N}), \qquad \Pi(\mathcal{D}) := \mathbb{P}(\mathcal{D} \subseteq \mathcal{N}).
\end{equation*}
In particular, $\Pi(\varnothing) = 1$. Conditioning on $\mathcal{N}$ means conditioning on the entire survivor set.

The reserve primes are divided into $\mathcal{R}$, used to cover composite survivors in groups, and $\mathcal{R}'$, used to mop up the remaining integers individually. The set still uncovered after the group covering is denoted by $\mathcal{M}$. The covering framework and terminology are introduced in Section~\ref{sec:covering-framework-and-strategy}; the parameters and the candidate set are specified in Section~\ref{sec:intermediate-tilted-sieve}.

For $q \in \mathcal{R}$ and an integer $a$, the corresponding group is
\begin{equation*}
E_{q,a} := \{n \in \mathcal{C} : n \equiv a \bmod q\}.
\end{equation*}
This depends only on the class of $a$ modulo $q$. In particular, $E_{q,n}$ is the group containing a candidate $n$.

For functions $f$ and $g$, with $g$ positive, we write $f = O(g)$, or equivalently $f \ll g$, if $|f| \le Cg$ throughout the range under consideration for some constant $C > 0$. The notation $g \gg f$ has the same meaning. For positive functions, $f \asymp g$ means that $f \ll g$ and $g \ll f$. Implied constants may depend on parameters explicitly held fixed, but not on variables tending to infinity; further dependence and uniformity are specified where needed.

We write $f = o(g)$ if $f/g \to 0$, and $f \sim g$ if $f/g \to 1$. Unless otherwise indicated, limits in the covering argument are taken as $x \to \infty$.

\section{Covering framework and strategy}
\label{sec:covering-framework-and-strategy}

The classical Erd\H{o}s--Rankin construction and the construction considered here begin by choosing residue classes that cover most of an interval. They differ in what they require of the integers left uncovered. In the classical construction, sufficiently few must remain for each to be assigned a separate unused prime. Here we allow more to remain, provided that many can subsequently be covered together. A probabilistic framework makes both their number and their joint survival probabilities available for study.

\subsection{The covering problem}
\label{subsec:covering-problem-and-terminology}

Let $H$ be a positive integer, and let
\begin{equation*}
2 < w < v < x < H.
\end{equation*}
Our aim is to choose one residue class $a_{p} \bmod p$ for every prime $p \le x$ such that
\begin{equation*}
\{1, \ldots, H\} \subseteq \bigcup_{p \, \le \, x}(a_{p} + p\mathbb{Z}).
\end{equation*}
Such a choice is a \emph{covering} of $\{1, \ldots, H\}$. An integer $n$ is \emph{covered} by the chosen class modulo $p$ if $n \equiv a_{p} \bmod p$, and is \emph{uncovered} after a stage of the construction if none of the classes chosen so far covers it.

We call the primes less than or equal to $v$ the \emph{preliminary primes}. Their chosen classes are the \emph{preliminary classes}, and removing the integers they cover from $\{1, \ldots, H\}$ is the \emph{preliminary sieve}. Its \emph{survivors} form the set
\begin{equation*}
\mathcal{N} := \{1, \ldots, H\} \setminus \bigcup_{p \, \le \, v}(a_{p} + p\mathbb{Z}).
\end{equation*}
A survivor means a member of this set, even if it is covered at a later stage. The primes in $(v, x]$ are the \emph{reserve primes}; their classes are chosen after the preliminary sieve.

A preliminary prime for which we prescribe $a_{p} = 0$ deterministically is called a \emph{clamping prime}. Every survivor is coprime to every clamping prime. We always clamp the primes $p \le w$, so every survivor is $w$-rough, meaning that all its prime factors exceed $w$.

In the classical construction there is a further parameter $z$, with $w < z < v$, and the primes in $(z, v]$ are also clamping primes. If $H < wv$, these two ranges force every surviving integer greater than $1$ either to have all its prime factors in $(w, z]$, or to be a prime greater than $v$. Indeed, a surviving composite with a prime factor greater than $v$ would have a complementary factor greater than $w$, and hence would exceed $H$. Thus the composite survivors are $z$-smooth, meaning that all their prime factors are less than or equal to $z$. The tilted construction uses no second clamping range.

\subsection{Random residue classes}
\label{subsec:random-residue-classes}

For each preliminary prime $p \le v$, let $\mu_{p}$ be a probability distribution on $\mathbb{Z}/p\mathbb{Z}$. We identify $\mu_{p}$ with its probability mass function and write $\mu_{p}(n)$ for its value at the residue class of an integer $n$ modulo $p$. Choose the classes $a_{p}$ independently, with
\begin{equation*}
\mathbb{P}(a_{p} \equiv n \bmod p) = \mu_{p}(n) \qquad (n \in \mathbb{Z}).
\end{equation*}
The sample space is the finite product $\prod_{p \, \le \, v}\mathbb{Z}/p\mathbb{Z}$, equipped with the product of these distributions; an outcome specifies one class for every preliminary prime. Clamping $p$ means taking $\mu_{p}(0) = 1$. We make this choice for every $p \le w$.

The survivor set $\mathcal{N}$ depends on the chosen classes, and its cardinality is a random variable. For each integer $1 \le n \le H$, independence gives
\begin{equation*}
\mathbb{P}(n \in \mathcal{N}) = \prod_{p \, \le \, v}\bigl(1 - \mu_{p}(n)\bigr).
\end{equation*}
By linearity of expectation,
\begin{equation*}
\mathbb{E}\,\#\mathcal{N} = \sum_{n \, = \, 1}^{H}\prod_{p \, \le \, v}\bigl(1 - \mu_{p}(n)\bigr).
\end{equation*}
The survival events for different integers need not be independent. Their joint probabilities are given by the following identity.

\begin{lemma}[Joint survival]
\label{lem:probabilistic-joint-survival}
For every $\mathcal{D} \subseteq \{1, \ldots, H\}$,
\begin{equation*}
\mathbb{P}(\mathcal{D} \subseteq \mathcal{N}) = \prod_{p \, \le \, v}\left(1 - \sum_{a \, \in \, \mathcal{D}\bmod p}\mu_{p}(a)\right),
\end{equation*}
where $\mathcal{D}\bmod p$ is the set of residue classes modulo $p$ represented by members of $\mathcal{D}$, each class counted once.
\end{lemma}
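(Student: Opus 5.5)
The identity follows by translating the event $\mathcal{D} \subseteq \mathcal{N}$ into a conjunction, over preliminary primes, of conditions on the single coordinate $a_{p}$, and then invoking the independence built into the product measure. By definition, $n \in \mathcal{N}$ exactly when $1 \le n \le H$ and $n \not\equiv a_{p} \bmod p$ for every $p \le v$. Since $\mathcal{D} \subseteq \{1, \ldots, H\}$, the event $\mathcal{D} \subseteq \mathcal{N}$ is therefore the intersection over $p \le v$ of the events
\begin{equation*}
F_{p} := \{\, a_{p} \not\equiv d \bmod p \text{ for every } d \in \mathcal{D} \,\}.
\end{equation*}
Equivalently, $F_{p}$ is the event that the class $a_{p} \bmod p$ does not belong to the set $\mathcal{D}\bmod p$.

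Each event $F_{p}$ depends only on the coordinate $a_{p}$. The classes $a_{p}$ are chosen independently, since the sample space carries the product measure, so the events $F_{p}$ are independent and
\begin{equation*}
\mathbb{P}(\mathcal{D} \subseteq \mathcal{N}) = \prod_{p \, \le \, v}\mathbb{P}(F_{p}).
\end{equation*}

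For each $p$, the complement of $F_{p}$ is the event $a_{p} \in \mathcal{D}\bmod p$. This is a disjoint union of the events $a_{p} \equiv a \bmod p$, one for each distinct class $a \in \mathcal{D}\bmod p$. Hence $\mathbb{P}(F_{p}) = 1 - \sum_{a \in \mathcal{D}\bmod p}\mu_{p}(a)$, which is the claimed factor.

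The only point needing care is that the classes must be counted once each. Several members of $\mathcal{D}$ may lie in the same class modulo $p$, and summing $\mu_{p}(d)$ over $d \in \mathcal{D}$ would overcount the probability of that class. This is why the sum runs over $\mathcal{D}\bmod p$ and not over $\mathcal{D}$.

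Two degenerate cases are consistent with the formula. If $\mathcal{D} = \varnothing$, every factor equals $1$, giving $\Pi(\varnothing) = 1$. For a clamping prime $p$, the factor is $0$ if some $d \in \mathcal{D}$ is divisible by $p$, and $1$ otherwise, as it should be.

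I expect no real obstacle. The argument is a direct computation once the survival event has been factored into the independent events $F_{p}$.
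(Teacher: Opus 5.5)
Your proof is correct and follows essentially the same route as the paper. The paper also observes that every member of $\mathcal{D}$ survives the choice at $p$ exactly when $a_{p}$ lies outside $\mathcal{D}\bmod p$, takes the probability of that event as the local factor, and multiplies the factors using independence; your remarks on counting each class once and on the degenerate cases are accurate but not needed.
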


\begin{proof}
Every member of $\mathcal{D}$ survives the choice at $p$ precisely when the chosen class lies outside $\mathcal{D}\bmod p$. The factor indexed by $p$ is the probability of this event. Independence of the choices at different primes gives the product.
\end{proof}

Uniform selection and greedy selection are related by a simple averaging argument. Given a finite set $\mathcal{S}$ of integers still to be covered and an unused prime $p$, a uniformly chosen class modulo $p$ covers $\#\mathcal{S}/p$ members on average. A \emph{greedy choice} selects a class covering as many members of $\mathcal{S}$ as possible, with ties resolved arbitrarily, and therefore leaves at most $(1 - 1/p)\#\mathcal{S}$ members uncovered. Successive greedy choices give the same product bound for the number remaining as independent uniform choices give in expectation. The greedy choices depend on the integers left by earlier choices; they are not themselves independent selections from fixed distributions. This explains how the usual greedy step can be replaced by averaging after the clamping classes have been prescribed.

The distribution used in this paper instead biases the preliminary choices toward zero. We call this bias a \emph{tilt}, following \cite{GPT2026}. Its particular dependence on $p$ will give a power law for the survival probabilities of composite candidates. We introduce that distribution in the next section; the identities above apply to it as they do to uniform or deterministic choices.

\subsection{Mopping up the survivors}
\label{subsec:survivors-to-complete-covering}

If the number of survivors is no greater than the number of reserve primes, a covering can be completed by assigning a distinct reserve prime $p$ to each survivor $n$ and choosing $a_{p} \equiv n \bmod p$. We call this operation \emph{mopping up}. Any unused primes may be given arbitrary classes. Thus, for the classical construction, it suffices to arrange that
\begin{equation*}
\mathbb{E}\,\#\mathcal{N} \le \pi(x) - \pi(v).
\end{equation*}
Some outcome then leaves no more survivors than this expectation, and they can all be mopped up.

Our construction uses some reserve primes to cover several composite survivors at once before mopping up. We now partition the reserve primes into two fixed disjoint sets $\mathcal{R}$ and $\mathcal{R}'$. The primes in $\mathcal{R}$ are used for the group covering; those in $\mathcal{R}'$ are the \emph{final reserve primes}, held for mopping up. Their ranges will be specified with the other parameters.

After the preliminary classes have been chosen, we choose a class modulo each $p \in \mathcal{R}$ according to a distribution depending on those preliminary choices. Conditional on the preliminary choices, the choices at different primes in $\mathcal{R}$ are independent. The groups to be covered consist of composite candidates lying in the same residue class modulo a reserve prime. The joint-survival identity above will allow us to estimate the probability that all the members of a group survive, and these probabilities will guide the selection of the classes.

Let
\begin{equation*}
\mathcal{M} := \mathcal{N} \setminus \bigcup_{p \, \in \, \mathcal{R}}(a_{p} + p\mathbb{Z})
\end{equation*}
be the set of integers still uncovered after the group covering. We shall choose the distributions and parameters so that
\begin{equation*}
\mathbb{E}\,\#\mathcal{M} < \#\mathcal{R}',
\end{equation*}
where the expectation includes both stages of random choices. Some outcome therefore leaves fewer than $\#\mathcal{R}'$ integers uncovered. Fix such an outcome and mop up these integers with the final reserve primes.

The estimates that follow will account separately for the composite survivors covered in groups, the prime survivors, and the exceptional integers left for mopping up. The preliminary sieve need not leave few survivors in total. It must leave survivors whose joint probabilities permit an effective group covering, after which sufficiently few integers remain for the final reserve primes.

\section{The tilted preliminary sieve}
\label{sec:intermediate-tilted-sieve}

We now specify the preliminary distributions. Following \cite[Section~3]{GPT2026}, we give equal probability to all nonzero classes modulo each prime $w < p \le v$, while allowing the probability of zero to vary. This makes the survival probability of a $w$-rough integer a product of factors associated with its prime divisors in $(w, v]$. We choose these factors so that, for a squarefree integer whose prime factors all lie in this range, the survival probability depends only on its size.

\subsection{The power tilt}
\label{subsec:power-tilt}

Keep the clamping choices $\mu_{p}(0) = 1$ for $p \le w$. For each prime satisfying $w < p \le v$, choose a number $0 \le \beta_{p} \le 1$ and put
\begin{equation}
\label{eq:preliminary-beta-distributions}
\mu_{p}(0) := 1 - \beta_{p}, \qquad \mu_{p}(a) := \frac{\beta_{p}}{p - 1} \quad (a \not\equiv 0 \bmod p).
\end{equation}
Thus $\beta_{p}$ is the probability of choosing a nonzero class. The choice $\beta_{p} = 0$ clamps $p$, while $\beta_{p} = (p - 1)/p$ gives the uniform distribution on all classes modulo $p$. Values between these two choices bias the distribution toward zero.

A multiple of $p$ survives this choice with probability $\beta_{p}$. A nonmultiple survives with probability
\begin{equation*}
B_{p} := 1 - \frac{\beta_{p}}{p - 1}.
\end{equation*}
Since $p > w > 2$, we have $B_{p} > 0$. Write
\begin{equation*}
\lambda_{p} := \frac{\beta_{p}}{B_{p}},
\end{equation*}
so that $\lambda_{p}$ is the ratio of the survival probability for a multiple of $p$ to that for a nonmultiple. Finally, put
\begin{equation*}
A := \prod_{w \, < \, p \, \le \, v}B_{p}.
\end{equation*}
These definitions do not yet impose a particular dependence on $p$.

\begin{lemma}[Multiplicative survival law]
\label{lem:probabilistic-multiplicative-survival}
Under the independent choices \eqref{eq:preliminary-beta-distributions}, every $w$-rough integer $1 \le n \le H$ satisfies
\begin{equation}
\label{eq:probabilistic-multiplicative-law}
\mathbb{P}(n \in \mathcal{N}) = A\prod_{\substack{w \, < \, p \, \le \, v \\ p \, \mid \, n}}\lambda_{p}.
\end{equation}
Integers in $\{1, \ldots, H\}$ that are not $w$-rough have survival probability zero.
\end{lemma}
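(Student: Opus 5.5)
We apply the single-integer case of Lemma~\ref{lem:probabilistic-joint-survival}, or equivalently the product formula $\mathbb{P}(n \in \mathcal{N}) = \prod_{p \le v}(1 - \mu_{p}(n))$ from Section~\ref{subsec:random-residue-classes}. We then evaluate the factor indexed by each preliminary prime $p \le v$ separately, splitting into the clamping range $p \le w$ and the tilted range $w < p \le v$.

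\emph{Clamping range.} For $p \le w$ we have $\mu_{p}(0) = 1$, and $\mu_{p}(a) = 0$ for every nonzero class $a$. The factor indexed by $p$ is therefore $0$ if $p \mid n$ and $1$ otherwise. If $n$ is not $w$-rough, some $p \le w$ divides $n$, and the whole product vanishes. This proves the second assertion. If $n$ is $w$-rough, every factor with $p \le w$ equals $1$.

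\emph{Tilted range.} For $w < p \le v$, the definitions \eqref{eq:preliminary-beta-distributions} give $1 - \mu_{p}(n) = 1 - (1 - \beta_{p}) = \beta_{p}$ when $p \mid n$. When $p \nmid n$, they give $1 - \mu_{p}(n) = 1 - \beta_{p}/(p-1) = B_{p}$. In both cases the factor can be written uniformly as $B_{p}\lambda_{p}^{\mathbf{1}_{p \mid n}}$, since $B_{p} > 0$ and $\lambda_{p} = \beta_{p}/B_{p}$. Taking the product over $w < p \le v$ yields $A\prod_{w < p \le v,\ p \mid n}\lambda_{p}$. This is \eqref{eq:probabilistic-multiplicative-law}.

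There is no genuine obstacle. The only point needing care is the division by $B_{p}$, which is legitimate because $B_{p} > 0$, as already noted (using $p > 2$ and $\beta_{p} \le 1$). Note also that the prime divisors of $n$ exceeding $v$ contribute nothing, since they index no factor in the product.
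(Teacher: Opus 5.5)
Your proposal is correct and follows the paper's proof: the clamping primes contribute a factor $0$ or $1$ according to whether $p \mid n$, each prime $w < p \le v$ contributes $B_{p}$ or $\beta_{p} = B_{p}\lambda_{p}$ according to whether $p \nmid n$ or $p \mid n$, and the product gives the law. Your remarks on $B_{p} > 0$ and on prime divisors above $v$ are accurate small additions to what the paper states.
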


\begin{proof}
The clamping primes cover every integer with a prime factor less than or equal to $w$, and cover no $w$-rough integer. For $w < p \le v$, the local survival probability is $B_{p}$ if $p \nmid n$ and $\beta_{p} = B_{p}\lambda_{p}$ if $p \mid n$. Multiplying these factors proves \eqref{eq:probabilistic-multiplicative-law}.
\end{proof}

The factor contributed by a prime divisor appears once, regardless of its multiplicity. For squarefree integers, we can make this product depend only on the size of the integer by choosing the same power at every prime. Let $\tau > 0$ be a parameter, to be chosen later, and take $\lambda_{p} = p^{-\tau}$. Then, for every squarefree integer $n$ whose prime factors all lie in $(w, v]$,
\begin{equation*}
\prod_{p \, \mid \, n}\lambda_{p} = \prod_{p \, \mid \, n}p^{-\tau} = n^{-\tau}.
\end{equation*}
Solving $\beta_{p}/B_{p} = p^{-\tau}$ gives the \emph{power tilt}
\begin{equation}
\label{eq:intermediate-tilted-law}
\beta_{p} := \frac{(p - 1)p^{-\tau}}{p - 1 + p^{-\tau}} \qquad (w < p \le v),
\end{equation}
with
\begin{equation}
\label{eq:intermediate-local-parameters}
B_{p} = \frac{p - 1}{p - 1 + p^{-\tau}}, \qquad \lambda_{p} = p^{-\tau}.
\end{equation}
The choice \eqref{eq:intermediate-tilted-law} gives the uniform distribution at $\tau = 0$. For $\tau > 0$, the probability of zero is greater than $1/p$, and every nonzero class has probability less than $1/p$. The word \emph{tilt} refers to this bias, and \emph{power} to the prescribed ratio $p^{-\tau}$.

Increasing $\tau$ decreases the factor $n^{-\tau}$, but also increases $A$. In particular, it increases the survival probability of primes greater than $v$, which have no prime divisor in the product in \eqref{eq:probabilistic-multiplicative-law}. The eventual choice of $\tau$ must balance the covering of composite survivors against the number of prime survivors left for mopping up.

\subsection{Parameters and composite candidates}
\label{subsec:intermediate-parameters}

Let $0 < c \le 1$ be fixed, to be chosen sufficiently small. For sufficiently large $x$, take
\begin{equation}
\label{eq:target-interval-length}
H := \left\lfloor\frac{cx\log x}{(\log_{2} x)\log_{3} x}\right\rfloor,
\end{equation}
and set
\begin{equation}
\label{eq:intermediate-length-parameters}
w := (\log x)^{100}, \qquad v := \frac{x}{2}.
\end{equation}
The exponent $100$ gives ample room for the estimates involving small prime factors. No optimization of this fixed exponent is needed. The choice $v = x/2$ leaves a fixed proportion of the available primes in reserve. We divide them into
\begin{equation*}
\mathcal{R} := \{p : x/2 < p \le 3x/4\}, \qquad \mathcal{R}' := \{p : 3x/4 < p \le x\}.
\end{equation*}
The prime number theorem gives
\begin{equation}
\label{eq:intermediate-prime-budgets}
\#\mathcal{R} \sim \#\mathcal{R}' \sim \frac{x}{4\log x}.
\end{equation}
We will also use the sum of the reciprocals of the group-covering primes. By partial summation and the prime number theorem,
\begin{equation}
\label{eq:intermediate-reserve-reciprocals}
M := \sum_{p \, \in \, \mathcal{R}}\frac{1}{p} \sim \int_{x/2}^{3x/4}\frac{du}{u\log u} \sim \frac{\log(3/2)}{\log x}.
\end{equation}

We will use the primes in $\mathcal{R}$ to cover groups of squarefree $w$-rough composite integers. Put
\begin{equation}
\label{eq:initial-interval-cutoff}
T := \left\lfloor\frac{x}{(\log x)^{2}}\right\rfloor,
\end{equation}
and define
\begin{equation*}
\mathcal{C} := \{n \in \mathbb{Z} : T < n \le H,\ n \text{ is squarefree, } w\text{-rough and composite}\}.
\end{equation*}
We call the members of $\mathcal{C}$ \emph{composite candidates}. For each prime $p \in \mathcal{R}$, a group consists of the members of $\mathcal{C}$ in one residue class modulo $p$; choosing that class covers every member of the group. The set $\mathcal{C}$ is fixed before the preliminary classes are chosen, so a candidate need not be a survivor.

The lower cutoff $T$ serves two purposes. First, there are only $T = o(x/\log x)$ integers in $\{1, \ldots, T\}$, a negligible number compared with the final reserve count in \eqref{eq:intermediate-prime-budgets}. Any that remain uncovered can therefore be left for mopping up. Second, the survival probability of each member $n$ of $\mathcal{C}$ is $An^{-\tau}$, as we shall prove below. With the tilt used below, $\tau\log(H/T) = o(1)$, so these probabilities vary by a factor tending to $1$ throughout $\mathcal{C}$. The parameter $T$ thus restricts the integers considered for group covering; unlike the classical parameter $z$, it does not separate ranges of primes receiving different residue choices.

Every survivor greater than $T$ is either a prime, a member of $\mathcal{C}$, or a nonsquarefree $w$-rough composite. The last type will also be left for mopping up; its number will be estimated when we count the remaining integers.

For all sufficiently large $x$, the parameters satisfy
\begin{equation}
\label{eq:intermediate-parameter-relations}
2 < w < T < v < x < H < wv < \frac{x^{2}}{4}.
\end{equation}
Indeed, these comparisons follow from \eqref{eq:target-interval-length}, \eqref{eq:intermediate-length-parameters} and \eqref{eq:initial-interval-cutoff}. In particular,
\begin{equation*}
\frac{H}{wv} \ll \frac{1}{(\log x)^{99}(\log_{2} x)\log_{3} x} \to 0.
\end{equation*}
The inequality $H < wv$ will constrain the prime factors of composite candidates; $H < x^{2}/4$ will control intersections between their residue classes modulo reserve primes.

For the sizes of these classes, put
\begin{equation*}
U := \left\lceil\frac{2H}{x}\right\rceil + 2.
\end{equation*}
A residue class modulo any $p \in \mathcal{R}$ contains at most $(H/p) + 1 < U$ integers in $\{1, \ldots, H\}$. Moreover,
\begin{equation}
\label{eq:intermediate-group-size-bound}
U \ll \frac{\log x}{(\log_{2} x)\log_{3} x}, \qquad 2U < w.
\end{equation}
The second inequality holds for sufficiently large $x$ and will ensure that these groups are small compared with every preliminary prime greater than $w$.

\subsection{Individual survival probabilities}
\label{subsec:intermediate-one-point-probabilities}

We now restrict the tilt to a range that will contain its final choice. Write
\begin{equation}
\label{eq:intermediate-tilt-range}
\tau := \frac{t}{\log x}, \qquad \log_{3} x \le t \le 3\log_{3} x.
\end{equation}
This parametrization is convenient because $n^{-\tau} = (1 + o(1))e^{-t}$ uniformly for $T < n \le H$, as the estimates below will show. Until $t$ is chosen, all asymptotic estimates are uniform in this range. Implied constants may depend on the fixed $c$, but not on $x$, $t$, or the primes and integers under consideration.

\begin{lemma}[Individual survival]
\label{lem:intermediate-one-point-law}
Every prime factor of a $w$-rough composite less than or equal to $H$ lies in $(w, v]$. Consequently, for every $n \in \mathcal{C}$,
\begin{equation}
\label{eq:intermediate-composite-probability}
q_{n} := \mathbb{P}(n \in \mathcal{N}) = An^{-\tau}.
\end{equation}
For a prime $\ell \le H$,
\begin{equation*}
\mathbb{P}(\ell \in \mathcal{N}) =
\begin{cases}
0, & \ell \le w, \\
A\ell^{-\tau}, & w < \ell \le v, \\
A, & v < \ell \le H.
\end{cases}
\end{equation*}
\end{lemma}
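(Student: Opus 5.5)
The first claim is a size argument using $H < wv$ from \eqref{eq:intermediate-parameter-relations}. Let $n \le H$ be a $w$-rough composite and let $p$ be a prime factor of $n$. Then $p > w$ because $n$ is $w$-rough. Since $n$ is composite, the complementary factor $n/p$ is at least $2$, and it is also $w$-rough, so $n/p > w$. If $p > v$, we would get $n = p \cdot (n/p) > vw > H$, which is impossible. Hence $p \in (w, v]$.

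For $n \in \mathcal{C}$, we apply Lemma~\ref{lem:probabilistic-multiplicative-survival}, which is valid because $n$ is $w$-rough and $n \le H$. Every prime divisor of $n$ lies in $(w, v]$, so the product in \eqref{eq:probabilistic-multiplicative-law} runs over all prime divisors of $n$. With $\lambda_{p} = p^{-\tau}$ from \eqref{eq:intermediate-local-parameters}, the product equals $\prod_{p \mid n} p^{-\tau} = \operatorname{rad}(n)^{-\tau}$. This equals $n^{-\tau}$ because $n$ is squarefree, which gives \eqref{eq:intermediate-composite-probability}.

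For a prime $\ell \le H$ we split into three cases.
\begin{itemize}
\item If $\ell \le w$, then $\ell$ is not $w$-rough. Indeed, the clamping choice $a_{\ell} = 0$ covers it, and Lemma~\ref{lem:probabilistic-multiplicative-survival} gives probability $0$.
\item If $w < \ell \le v$, then $\ell$ is $w$-rough and its only prime divisor lies in $(w, v]$, so the product is $\lambda_{\ell} = \ell^{-\tau}$.
\item If $v < \ell \le H$, then $\ell$ is $w$-rough and has no prime divisor in $(w, v]$. The product is empty and equals $1$, giving probability $A$.
\end{itemize}

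There is no real obstacle here. The only point needing care is the first claim: one must check that the complementary factor $n/p$ exceeds $w$, which uses roughness together with compositeness. That inequality is exactly where the condition $H < wv$ is used.
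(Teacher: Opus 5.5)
Your proof is correct and follows essentially the same route as the paper. You rule out a prime factor $p > v$ via $n = p\cdot(n/p) > vw > H$, then apply Lemma~\ref{lem:probabilistic-multiplicative-survival} with $\lambda_{p} = p^{-\tau}$. Squarefreeness then gives $An^{-\tau}$, and the same lemma settles the three cases for primes.
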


\begin{proof}
If a $w$-rough composite $n \le H$ had a prime factor $p > v$, its complementary factor $n/p$ would be greater than $w$. This would give $n > wv > H$, contrary to \eqref{eq:intermediate-parameter-relations}. All its prime factors therefore lie in $(w, v]$. If $n \in \mathcal{C}$, it is also squarefree, so Lemma~\ref{lem:probabilistic-multiplicative-survival} and \eqref{eq:intermediate-local-parameters} give $q_{n} = An^{-\tau}$. The same lemma gives the assertions about primes.
\end{proof}

Before estimating $A$, we record the logarithmic comparisons needed for uniformity. The definitions of $H$ and $T$ give
\begin{equation}
\label{eq:intermediate-logarithmic-lengths}
\log H = \log x + O(\log_{2} x), \qquad \log T = \log x + O(\log_{2} x).
\end{equation}
Since $\log w = 100\log_{2} x$, the range \eqref{eq:intermediate-tilt-range} implies
\begin{equation}
\label{eq:intermediate-small-lower-endpoint}
\tau\log w = \frac{100t\log_{2} x}{\log x} = o(1).
\end{equation}
Similarly,
\begin{equation}
\label{eq:intermediate-small-probability-variation}
0 < \tau\log(H/T) \ll \frac{(\log_{2} x)\log_{3} x}{\log x} = o(1).
\end{equation}
Finally, \eqref{eq:intermediate-group-size-bound} gives the useful group estimate
\begin{equation*}
Ut \ll \frac{\log x}{\log_{2} x}.
\end{equation*}
These estimates separate the two effects of the tilt: it has negligible variation across the candidate interval, while $t \to \infty$ makes the common factor $e^{-t}$ tend to zero.

\subsection{Sizes of the survival probabilities}
\label{subsec:intermediate-survival-probability-sizes}

It remains to estimate $A$, which is also the survival probability of every prime greater than $v$. Mertens' reciprocal-prime formula gives its leading term.

\begin{lemma}[The normalizing product]
\label{lem:intermediate-normalizing-product}
Uniformly in the range \eqref{eq:intermediate-tilt-range},
\begin{equation}
\label{eq:intermediate-product-estimate}
A = (e^{\gamma} + o(1))\tau\log w = (100e^{\gamma} + o(1))\frac{t\log_{2} x}{\log x}.
\end{equation}
\end{lemma}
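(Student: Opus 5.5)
We take logarithms of $A=\prod_{w<p\le v}B_p$ and compare with the Mertens product. By \eqref{eq:intermediate-local-parameters},
\begin{equation*}
-\log A = \sum_{w<p\le v}\log\Bigl(1+\frac{p^{-\tau}}{p-1}\Bigr) = \sum_{w<p\le v}\frac{p^{-\tau}}{p-1} + O\Bigl(\sum_{p>w}\frac{1}{p^{2}}\Bigr),
\end{equation*}
and the error term is $O(1/w)=o(1)$. Replacing $1/(p-1)$ by $1/p$ costs another $O(1/w)$. So everything reduces to the asymptotic
\begin{equation*}
S:=\sum_{w<p\le v}\frac{p^{-\tau}}{p} = \log\frac{1}{\tau\log w} - \gamma + o(1),
\end{equation*}
uniformly for $\log_{3}x\le t\le 3\log_{3}x$. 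Exponentiating then gives $A=(e^{\gamma}+o(1))\tau\log w$. Substituting $\tau=t/\log x$ and $\log w=100\log_{2}x$ yields the second form in \eqref{eq:intermediate-product-estimate}.

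To evaluate $S$, we write it as a Stieltjes integral against $\pi(u)$, or more conveniently against $\sum_{p\le u}1/p=\log_{2}u+b+E(u)$. Here Mertens' formula gives $E(u)\to0$, and with the prime number theorem we may take $E(u)\ll 1/\log u$. The main term becomes
\begin{equation*}
\int_{w}^{v}u^{-\tau}\frac{du}{u\log u}.
\end{equation*}
The substitution $s=\tau\log u$ turns it into
\begin{equation*}
\int_{\tau\log w}^{\tau\log v}e^{-s}\frac{ds}{s}.
\end{equation*}
The lower limit $\tau\log w$ tends to $0$ by \eqref{eq:intermediate-small-lower-endpoint}. The upper limit $\tau\log v=t+o(1)$ tends to infinity, since $t\ge\log_{3}x$. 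The standard expansion $\int_{\varepsilon}^{\infty}e^{-s}\,ds/s=\log(1/\varepsilon)-\gamma+O(\varepsilon)$ then gives the claimed main term. The truncated tail $\int_{\tau\log v}^{\infty}e^{-s}\,ds/s\le e^{-t}$ is $o(1)$, uniformly in $t$.

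The main obstacle is controlling the error from $E(u)$ after partial summation. Its contribution is
\begin{equation*}
\bigl[u^{-\tau}E(u)\bigr]_{w}^{v}+\tau\int_{w}^{v}E(u)u^{-\tau-1}\,du,
\end{equation*}
and the boundary terms are $o(1)$. For the integral we use $E(u)\ll1/\log u$, which bounds it by $\tau\int_w^v u^{-1-\tau}\,du/\log u$. This is comparable to $\int e^{-s}\,ds$ over the $s$-range, hence $O(1)$. That is not yet small enough, so we split the integral at $u_0$ with $\log u_0=\sqrt{\log w}$, say, or else use $E(u)=o(1)$ directly. The bound becomes $\sup_{u\ge w}|E(u)|\cdot\tau\int_w^\infty u^{-1-\tau}\,du\le\sup_{u\ge w}|E(u)|\,w^{-\tau}$, which is $o(1)$ because $w\to\infty$. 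This uses only Mertens' formula with an $o(1)$ error, and every step is uniform in $t$ because all the bounds depend only on $\tau\log w\to0$ and $\tau\log v\to\infty$.
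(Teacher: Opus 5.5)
Your proof is correct and follows the paper's route: you expand $-\log A$ to $\sum_{w<p\le v}p^{-\tau}/p+O(1/w)$, apply partial summation against Mertens' formula, substitute $s=\tau\log u$, and use $\int_{a}^{\infty}e^{-s}\,ds/s=-\log a-\gamma+O(a)$ with $a=\tau\log w\to 0$ and upper endpoint $\tau\log v\to\infty$, uniformly in $t$. Your final error bound $\sup_{u\ge w}|E(u)|\,w^{-\tau}$ is exactly the paper's observation that $u^{-\tau}$ has total variation at most $1$ on $[w,v]$, so the detour through $E(u)\ll 1/\log u$ and the split at $u_{0}$ can simply be deleted.
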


\begin{proof}
By \eqref{eq:intermediate-local-parameters},
\begin{equation*}
-\log A = \sum_{w \, < \, p \, \le \, v}\log\left(1 + \frac{p^{-\tau}}{p - 1}\right) = \sum_{w \, < \, p \, \le \, v}\frac{p^{-\tau}}{p} + O(1/w).
\end{equation*}
Here the error follows from $p^{-\tau} \le 1$ and $\sum_{p \, > \, w}p^{-2} \ll 1/w$.

Mertens' formula states that
\begin{equation*}
\sum_{p \, \le \, u}\frac{1}{p} = \log\log u + b + o(1),
\end{equation*}
where $b$ is an absolute constant. Partial summation against $u^{-\tau}$ therefore gives
\begin{equation}
\label{eq:intermediate-product-integral}
\sum_{w \, < \, p \, \le \, v}\frac{p^{-\tau}}{p} = \int_{w}^{v}\frac{du}{u^{1 + \tau}\log u} + o(1) = \int_{\tau\log w}^{\tau\log v}\frac{e^{-s}}{s}\,ds + o(1).
\end{equation}
The error is uniform in $t$: the error in Mertens' formula tends uniformly to zero for $u \ge w$, while $u^{-\tau}$ is bounded by $1$ and has total variation at most $1$ on $[w, v]$.

We use the elementary integral identity
\begin{equation*}
\gamma = \int_{0}^{1}\frac{1 - e^{-s}}{s}\,ds - \int_{1}^{\infty}\frac{e^{-s}}{s}\,ds.
\end{equation*}
Splitting the integral at $1$ shows that, as $a \downarrow 0$,
\begin{equation}
\label{eq:intermediate-exponential-integral}
\int_{a}^{\infty}\frac{e^{-s}}{s}\,ds = -\log a - \gamma + O(a).
\end{equation}
The lower endpoint in \eqref{eq:intermediate-product-integral} tends to zero by \eqref{eq:intermediate-small-lower-endpoint}. At the upper endpoint,
\begin{equation*}
\tau\log v = t + o(1) \to \infty,
\end{equation*}
so extending the integral to infinity introduces an error tending to zero uniformly in $t$. Equations~\eqref{eq:intermediate-product-integral} and \eqref{eq:intermediate-exponential-integral} now give
\begin{equation*}
-\log A = -\log(\tau\log w) - \gamma + o(1).
\end{equation*}
Exponentiating and substituting $\log w = 100\log_{2} x$ proves \eqref{eq:intermediate-product-estimate}.
\end{proof}

\begin{lemma}[A common scale for composite survival]
\label{lem:intermediate-probability-comparison}
Put
\begin{equation*}
Q := AT^{-\tau}.
\end{equation*}
Uniformly for $n \in \mathcal{C}$ and $t$ in \eqref{eq:intermediate-tilt-range},
\begin{equation}
\label{eq:intermediate-probability-comparison}
q_{n} \le Q, \qquad q_{n} = (1 + o(1))Q.
\end{equation}
Moreover,
\begin{equation}
\label{eq:intermediate-common-probability-size}
Q = (1 + o(1))Ae^{-t} = (100e^{\gamma} + o(1))\frac{te^{-t}\log_{2} x}{\log x}.
\end{equation}
\end{lemma}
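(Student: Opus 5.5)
The plan is to combine Lemma~\ref{lem:intermediate-one-point-law} with the logarithmic comparisons \eqref{eq:intermediate-logarithmic-lengths} and \eqref{eq:intermediate-small-probability-variation}, and then substitute Lemma~\ref{lem:intermediate-normalizing-product}.

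First, for $n \in \mathcal{C}$ we have $q_{n} = An^{-\tau}$ by \eqref{eq:intermediate-composite-probability}. Since $n > T$ and $\tau > 0$, it follows that $n^{-\tau} < T^{-\tau}$, which gives $q_{n} \le Q$. For the asymptotic, we write
\begin{equation*}
\frac{q_{n}}{Q} = \left(\frac{T}{n}\right)^{\tau} = \exp\bigl(-\tau\log(n/T)\bigr).
\end{equation*}
Because $T < n \le H$, the exponent satisfies $0 \le \tau\log(n/T) \le \tau\log(H/T)$. By \eqref{eq:intermediate-small-probability-variation}, this is $o(1)$ uniformly in $t$ from the range \eqref{eq:intermediate-tilt-range}. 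Hence $q_{n} = (1 + o(1))Q$ uniformly, which proves \eqref{eq:intermediate-probability-comparison}.

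Next, we compare $T^{-\tau}$ with $e^{-t}$. By \eqref{eq:intermediate-logarithmic-lengths},
\begin{equation*}
\tau\log T = \frac{t}{\log x}\bigl(\log x + O(\log_{2} x)\bigr) = t + O\!\left(\frac{t\log_{2} x}{\log x}\right).
\end{equation*}
Since $t \le 3\log_{3} x$, the error term is $O((\log_{2} x)(\log_{3} x)/\log x) = o(1)$. Therefore $T^{-\tau} = (1 + o(1))e^{-t}$, and so $Q = (1 + o(1))Ae^{-t}$. Substituting \eqref{eq:intermediate-product-estimate} for $A$ then gives the second form of \eqref{eq:intermediate-common-probability-size}.

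No step here is a real obstacle. The only point needing care is uniformity in $t$: every $o(1)$ term must be bounded using $t \le 3\log_{3} x$, and the $o(1)$ term in Lemma~\ref{lem:intermediate-normalizing-product} is already uniform in $t$.
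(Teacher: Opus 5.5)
Your proposal is correct and follows the paper's proof essentially step for step. Like the paper, you bound $q_{n}/Q = (T/n)^{\tau}$ between $\exp(-\tau\log(H/T))$ and $1$, and use \eqref{eq:intermediate-logarithmic-lengths} to get $\tau\log T = t + o(1)$ uniformly in $t$ before substituting Lemma~\ref{lem:intermediate-normalizing-product}.
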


\begin{proof}
By \eqref{eq:intermediate-composite-probability}, for every $n \in \mathcal{C}$,
\begin{equation*}
\exp\bigl(-\tau\log(H/T)\bigr) \le \frac{q_{n}}{Q} = \left(\frac{T}{n}\right)^{\tau} \le 1.
\end{equation*}
Equation~\eqref{eq:intermediate-small-probability-variation} proves \eqref{eq:intermediate-probability-comparison}. Also, \eqref{eq:intermediate-logarithmic-lengths} gives $\tau\log T = t + o(1)$ uniformly in $t$. Hence $T^{-\tau} = (1 + o(1))e^{-t}$, and \eqref{eq:intermediate-product-estimate} proves \eqref{eq:intermediate-common-probability-size}.
\end{proof}

The two relevant scales are now explicit: primes greater than $v$ survive with probability $A$, while composite candidates survive with probability asymptotic to $Ae^{-t}$. We will choose $t$ after estimating how effectively the reserve primes cover composite survivors in groups. That step requires joint survival probabilities, to which we now turn.

\section{Joint survival and groups of composite candidates}
\label{sec:intermediate-joint-probabilities}

A group will consist of all the composite candidates in one residue class modulo a prime in $\mathcal{R}$. If every member of a group survives the preliminary sieve, choosing that class covers them all at once. In the next section we will assign such a group a weight equal to the reciprocal of its probability of surviving in full. To control the sums of these weights, we need to know how the survival of one group affects the survival of another. We follow the method of \cite[Sections~4--5]{GPT2026}, using entire residue classes as groups.

There are two kinds of overlap to distinguish. Groups for the same reserve prime are disjoint, and groups for different reserve primes have at most one integer in common (two common integers would differ by a multiple of the product of those primes, which exceeds $H$). But even disjoint groups can contain integers with common prime factors, and these cause dependence between their survival events. The purpose of this section is to express that dependence through greatest common divisors and show that its contribution is small on average. We will not need to select a disjoint family of groups.

Throughout this section the parameters are those of Section~\ref{sec:intermediate-tilted-sieve}, with $t$ in the range \eqref{eq:intermediate-tilt-range}. Recall that
\begin{equation*}
U = \left\lceil\frac{2H}{x}\right\rceil + 2
\end{equation*}
bounds the number of integers in $(T, H]$ in one residue class modulo a reserve prime, and satisfies \eqref{eq:intermediate-group-size-bound}. All estimates below are uniform in $t$ and in the indicated candidates and reserve primes.

\subsection{Joint survival probabilities}
\label{subsec:intermediate-small-set-estimate}

For $\mathcal{D} \subseteq \mathcal{C}$, write
\begin{equation*}
\Pi(\mathcal{D}) := \mathbb{P}(\mathcal{D} \subseteq \mathcal{N}).
\end{equation*}
Thus $\Pi(\mathcal{D})$ is the probability that every member of $\mathcal{D}$ survives, and $\Pi(\varnothing) = 1$. We write $\operatorname{rad}(m)$ for the product of the distinct prime divisors of $m$, with $\operatorname{rad}(1) = 1$.

The radical appears because a prime dividing several members of $\mathcal{D}$ imposes only one requirement: the chosen class at that prime must avoid zero. Its factor $p^{-\tau}$ therefore occurs once in the joint probability, rather than once for each divisible member. The following lemma makes this observation precise, with an error accounting for the nonzero residue classes.

\begin{lemma}[Joint survival estimate]
\label{lem:intermediate-joint-law}
If $\mathcal{D} \subseteq \mathcal{C}$ has $k \le 2U$ members, then
\begin{equation}
\label{eq:intermediate-joint-radical-law}
\Pi(\mathcal{D}) = A^{k}\operatorname{rad}\left(\prod_{n \, \in \, \mathcal{D}}n\right)^{-\tau}\exp\left(O\left(\frac{k^{2}\log H}{w\log w}\right)\right).
\end{equation}
In particular, $\Pi(\mathcal{D}) > 0$.
\end{lemma}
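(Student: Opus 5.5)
We apply Lemma~\ref{lem:probabilistic-joint-survival} prime by prime and compare each local factor with the corresponding factor in the claimed main term. The clamped primes $p \le w$ contribute factor $1$, since every member of $\mathcal{D}$ is $w$-rough and so $\mathcal{D}\bmod p$ omits $0$. Every member of $\mathcal{C}$ is squarefree with all prime factors in $(w,v]$ by Lemma~\ref{lem:intermediate-one-point-law}, so no prime above $v$ matters. Fix $w < p \le v$. Let $r_{p} := \#(\mathcal{D}\bmod p \setminus\{0\})$, and let $\epsilon_{p} \in \{0,1\}$ indicate whether $p$ divides some member of $\mathcal{D}$. The local factor is then
\begin{equation*}
F_{p} = 1 - \epsilon_{p}(1-\beta_{p}) - \frac{r_{p}\beta_{p}}{p-1}.
\end{equation*}

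If $\epsilon_{p} = 1$, then $F_{p} = \beta_{p}\bigl(1 - r_{p}/(p-1)\bigr)$. Since $\beta_{p} = B_{p}p^{-\tau}$, this equals $B_{p}p^{-\tau}$ times $\bigl(1 - r_{p}/(p-1)\bigr)$. If $\epsilon_{p} = 0$, then $F_{p} = 1 - r_{p}\beta_{p}/(p-1)$. We compare it with $B_{p}^{k} = (1 - \beta_{p}/(p-1))^{k}$. When $\epsilon_{p} = 1$, the target contribution is $B_{p}^{k}p^{-\tau}$, so we must also account for a surplus factor $B_{p}^{-(k-1)}$.

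The key count is the number $e_{p}$ of members of $\mathcal{D}$ whose classes collide with another member's class modulo $p$, including class $0$. Each local ratio to the target is $\exp\bigl(O((k_{p}' + e_{p})/p)\bigr)$ with $k_{p}' \le k$. Here $k \le 2U < w < p$ keeps $r_{p}/(p-1) \le 1/2$, so logarithms are harmless and the ratio is positive. Indeed, $\log B_{p}^{-1} \ll 1/p$, and $r_{p} = k - (\text{number of members divisible by } p) - (\text{collisions among nonzero classes})$. Comparing $1 - r_{p}\beta_{p}/(p-1)$ with $(1-\beta_{p}/(p-1))^{k-\#\{n: p\mid n\}}$ costs $O(k^{2}/p^{2})$ from second-order terms plus $O(e_{p}/p)$ from collisions. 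The multiples of $p$ give the surplus $B_{p}^{-(\#\{n:p\mid n\}-\epsilon_{p})}$, which is $\exp(O(\#\{n:p\mid n\}/p))$ beyond the first. The factor $1 - r_{p}/(p-1)$ costs $\exp(O(k/p))$, but only at primes dividing some member.

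Summing, the second-order terms give $\sum_{p>w}k^{2}/p^{2} \ll k^{2}/w$. The terms supported on primes dividing members, or on collisions, are controlled by counting over pairs. A collision at $p$ between distinct $n, n' \in \mathcal{D}$ means $p \mid n - n'$ with $0 < |n-n'| < H$. A single such difference has at most $\log H/\log w$ prime factors exceeding $w$, each contributing $O(1/p) \le O(1/w)$. The same bound holds for the prime factors of each single member. Summing over at most $k^{2}$ pairs and $k$ members gives total error $O\bigl(k^{2}\log H/(w\log w)\bigr)$.

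Finally, $\prod_{p}\epsilon_{p}$-weighted $p^{-\tau}$ equals $\operatorname{rad}(\prod_{n\in\mathcal{D}} n)^{-\tau}$ and $\prod_{p} B_{p}^{k} = A^{k}$, giving \eqref{eq:intermediate-joint-radical-law}. Positivity follows because every $F_{p} > 0$, as $r_{p} \le k < p - 1$ and $\beta_{p} > 0$. The main obstacle is the bookkeeping in the $\epsilon_{p} = 1$ case: making sure the surplus factors of $B_{p}^{-1}$ and $1 - r_{p}/(p-1)$ are only paid at primes dividing some member. Those primes are bounded by $\log H/\log w$ per member, not summed over all $p \le v$.
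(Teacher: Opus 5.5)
Your proof is correct and follows the paper's argument. You apply the joint-survival identity and bound the local log-ratio by $O(k/p)$ at primes dividing $\prod_{n\in\mathcal{D}}n$ and by $O\bigl((k-r_p)/p + k^2/p^2\bigr)$ elsewhere; you then sum using the fact that $\prod_{n\in\mathcal{D}}n \le H^k$ and each difference $n-n'$ have few prime factors exceeding $w$. Two points could be tidied: the case $p \mid \prod_{n\in\mathcal{D}}n$ is simpler handled directly as $\log\bigl(1 - r_p/(p-1)\bigr) - (k-1)\log B_p = O(k/p)$, and the claim $r_p/(p-1) \le 1/2$ needs $2U/w = o(1)$ rather than merely $2U < w$.
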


\begin{proof}
The assertion is immediate for the empty set, so suppose $k \ge 1$ and put $N := \prod_{n \, \in \, \mathcal{D}}n$. For $w < p \le v$, let $s_{p}$ be the number of distinct nonzero classes modulo $p$ represented by members of $\mathcal{D}$. The general joint-survival identity in Lemma~\ref{lem:probabilistic-joint-survival} gives
\begin{equation*}
\Pi(\mathcal{D}) = \prod_{w \, < \, p \, \le \, v}f_{p}(\mathcal{D}),
\end{equation*}
where
\begin{equation}
\label{eq:intermediate-exact-joint-factors}
f_{p}(\mathcal{D}) =
\begin{cases}
1 - s_{p}\beta_{p}/(p - 1), & p \nmid N, \\
\beta_{p}\left(1 - s_{p}/(p - 1)\right), & p \mid N.
\end{cases}
\end{equation}
In the second case the choice must avoid zero as well as the $s_{p}$ occupied nonzero classes. The clamping primes contribute factors of $1$, since every candidate is $w$-rough. All these factors are positive: $0 < \beta_{p} < 1$ and $s_{p} \le p - 1$, with $s_{p} \le k - 1 < p - 1$ when $p \mid N$.

Recall that 
\begin{equation*}
A := \prod_{w \, < \, p \, \le \, v}B_{p}
\end{equation*}
and that every prime divisor of $N$ lies in $(w, v]$ by Lemma~\ref{lem:intermediate-one-point-law}. Thus the proposed main term $A^{k}\operatorname{rad}(N)^{-\tau}$ is the product of the factors $B_{p}^{k}$, with an additional factor $p^{-\tau}$ precisely when $p \mid N$. We estimate the logarithm of the ratio of each actual local factor to its proposed main factor; summing these logarithms will give
\begin{equation*}
\log\frac{\Pi(\mathcal{D})}{A^{k}\operatorname{rad}(N)^{-\tau}}.
\end{equation*}

If $p \nmid N$, expansion of $\log(1 - u)$ gives
\begin{equation*}
\log\frac{f_{p}(\mathcal{D})}{B_{p}^{k}} = O\left(\frac{k - s_{p}}{p} + \frac{k^{2}}{p^{2}}\right).
\end{equation*}
If $p \mid N$, use $\beta_{p} = B_{p}p^{-\tau}$ in \eqref{eq:intermediate-exact-joint-factors} to obtain
\begin{equation*}
\log\frac{f_{p}(\mathcal{D})}{B_{p}^{k}p^{-\tau}} = \log\left(1 - \frac{s_{p}}{p - 1}\right) - (k - 1)\log B_{p} = O(k/p).
\end{equation*}
Since $k/p \le 2U/w = o(1)$, all arguments of these logarithmic expansions lie in $[0, 1/2]$ for sufficiently large $x$, so the implied constants are absolute.

To sum the errors, observe that for every nonzero integer $d$,
\begin{equation}
\label{eq:intermediate-divisor-reciprocals}
\sum_{\substack{p \, \mid \, d \\ p \, > \, w}}\frac{1}{p} \le \frac{\log|d|}{w\log w}.
\end{equation}
Indeed, there are at most $(\log|d|)/\log w$ such prime divisors, and each contributes less than $1/w$. Applying \eqref{eq:intermediate-divisor-reciprocals} to $N \le H^{k}$ gives
\begin{equation*}
k\sum_{p \, \mid \, N}\frac{1}{p} \le \frac{k^{2}\log H}{w\log w}.
\end{equation*}
To bound the terms $(k - s_{p})/p$ arising when $p \nmid N$, observe that $k - s_{p}$ is at most the number of unordered pairs of distinct members of $\mathcal{D}$ that are congruent modulo $p$. Interchanging the sums and applying \eqref{eq:intermediate-divisor-reciprocals} to each positive difference $m - n < H$ therefore gives
\begin{equation*}
\sum_{\substack{w \, < \, p \, \le \, v \\ p \, \nmid \, N}}\frac{k - s_{p}}{p}
\le \sum_{\substack{n, m \, \in \, \mathcal{D} \\ n \, < \, m}}\ \sum_{\substack{p \, \mid \, m - n \\ p \, > \, w}}\frac{1}{p} 
\le \frac{k^{2}\log H}{w\log w}.
\end{equation*}
Finally, $k^{2}\sum_{p \, > \, w}p^{-2} \ll k^{2}/w$, which is bounded by the same quantity. Adding the local logarithmic errors therefore gives
\begin{equation*}
\log\frac{\Pi(\mathcal{D})}{A^{k}\operatorname{rad}(N)^{-\tau}}
= O\left(\frac{k^{2}\log H}{w\log w}\right).
\end{equation*}
Exponentiating proves \eqref{eq:intermediate-joint-radical-law}.
\end{proof}

\subsection{Residue classes as groups}
\label{subsec:intermediate-residue-groups}

In calculations involving both kinds of prime, we use $p$ for a preliminary prime and $q$ or $r$ for a reserve prime. For $q \in \mathcal{R}$ and an integer $a$, define
\begin{equation*}
E_{q,a} := \{n \in \mathcal{C} : n \equiv a \bmod q\}, \qquad N_{q,a} := \prod_{n \, \in \, E_{q,a}}n.
\end{equation*}
The set $E_{q,a}$ is the group associated with the class $a \bmod q$. It depends only on that class. In particular, $E_{q,n}$ denotes the group containing a candidate $n$. For a fixed $q$, the nonempty groups partition $\mathcal{C}$. We retain all $q$ residue labels, including those whose groups are empty, and take the corresponding empty products to be $1$.

These groups are defined before the preliminary sieve. Their members need not survive, and the groups for different primes need not be disjoint. The following properties follow from their spacing and the prime factors allowed in $\mathcal{C}$.

\begin{lemma}[Properties of the groups]
\label{lem:intermediate-group-geometry}
Each $E_{q,a}$ has at most $U$ members. Its members occupy distinct residue classes modulo every preliminary prime $w < p \le v$ and are pairwise coprime. Consequently $N_{q,a}$ is squarefree. For distinct reserve primes $q$ and $r$, any group modulo $q$ and any group modulo $r$ have at most one member in common. In particular,
\begin{equation}
\label{eq:group-intersection-at-candidate}
E_{q,n} \cap E_{r,n} = \{n\} \qquad (n \in \mathcal{C},\ q \ne r).
\end{equation}
Furthermore, uniformly in $q$ and $a$,
\begin{equation}
\label{eq:intermediate-inverse-group-probability}
\Pi(E_{q,a})^{-1} = x^{o(1)}.
\end{equation}
\end{lemma}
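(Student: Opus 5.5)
The plan is to prove the four assertions in the order stated, each resting on the spacing of a residue class modulo a reserve prime. Let $q \in \mathcal{R}$, so $q > x/2$. The members of $E_{q,a}$ lie in $(T, H]$ and are congruent modulo $q$, so there are at most $(H - T)/q + 1 \le 2H/x + 1 < U$ of them.

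For distinctness modulo preliminary primes, take two distinct members $n, m$. They satisfy $n - m = jq$ with $0 < |j| \le H/q < 2H/x < U$. If some preliminary prime $w < p \le v$ divided $n - m$, then since $p \ne q$ (as $p \le v = x/2 < q$), we would need $p \mid j$. This is impossible because $|j| < U < w < p$ by \eqref{eq:intermediate-group-size-bound}. Hence the members occupy distinct classes modulo each such $p$.

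Pairwise coprimality follows from this. A common prime factor $p$ of $n$ and $m$ would satisfy $p > w$, being a factor of a $w$-rough integer. It would also satisfy $p \le v$, by Lemma~\ref{lem:intermediate-one-point-law}. But then $p \mid n - m$, contradicting the previous step. Since each member is squarefree and the members are pairwise coprime, $N_{q,a}$ is squarefree.

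For the intersection property, suppose two distinct integers lie in a group modulo $q$ and also in a group modulo $r$, with $q \ne r$. Their difference is then divisible by $qr > x^{2}/4 > H$, which is impossible for two integers in $[1, H]$ by \eqref{eq:intermediate-parameter-relations}. Since $n$ lies in both $E_{q,n}$ and $E_{r,n}$, this gives \eqref{eq:group-intersection-at-candidate}.

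For \eqref{eq:intermediate-inverse-group-probability}, apply Lemma~\ref{lem:intermediate-joint-law} to $\mathcal{D} = E_{q,a}$, which has $k \le U \le 2U$ members. This gives
\begin{equation*}
\Pi(E_{q,a})^{-1} = A^{-k}\operatorname{rad}(N_{q,a})^{\tau}\exp\left(O\left(\frac{k^{2}\log H}{w\log w}\right)\right).
\end{equation*}
The three factors are bounded as follows.
\begin{itemize}
\item The exponential is $\exp(o(1))$, because $U^{2}\log H \ll (\log x)^{3}$ while $w = (\log x)^{100}$.
\item Since $N_{q,a} \le H^{k}$ and $\tau\log H = t + o(1)$, we have $\operatorname{rad}(N_{q,a})^{\tau} \le e^{k(t + o(1))}$. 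Using $Ut \ll (\log x)/\log_{2} x$, this is $x^{o(1)}$.
\item By Lemma~\ref{lem:intermediate-normalizing-product}, $A^{-1} \ll \log x$, so $A^{-k} \le \exp(O(U\log_{2} x))$. Using \eqref{eq:intermediate-group-size-bound}, $U\log_{2} x \ll (\log x)/\log_{3} x = o(\log x)$, so this factor is also $x^{o(1)}$.
\end{itemize}
All of these bounds are uniform in $q$, $a$ and $t$, and the empty group is trivial.

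The main point needing care is this last estimate. It relies on $U\log_{2} x = o(\log x)$, which is exactly where the choice of $H$ with the factor $(\log_{2} x)\log_{3} x$ in the denominator enters; the other steps are routine.
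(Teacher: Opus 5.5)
Your proposal is correct and follows the paper's proof essentially step for step. You use the spacing $n - m = jq$ with $0 < |j| < U < w$ for distinct residues and pairwise coprimality, and $qr > x^{2}/4 > H$ for the intersection property. For $\Pi(E_{q,a})^{-1} = x^{o(1)}$ you apply Lemma~\ref{lem:intermediate-joint-law} together with $\log A^{-1} = O(\log_{2} x)$, $\tau\log H = O(\log_{3} x)$ and \eqref{eq:intermediate-group-size-bound}, exactly as the paper does, except that you bound the three factors multiplicatively rather than summing their logarithms.
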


\begin{proof}
The size bound follows from $q > x/2$ and the definition of $U$. Distinct members of $E_{q,a}$ have difference $hq$, where
\begin{equation*}
0 < |h| < H/q < U < w.
\end{equation*}
No preliminary prime $p > w$ divides this difference: it divides neither $h$ nor $q$. The members therefore have distinct residues modulo $p$. All prime factors of all candidates are preliminary primes greater than $w$, so two members of the same group cannot have a common prime factor. Each member is squarefree, and hence so is their product.

If two distinct integers belonged to groups modulo both $q$ and $r$, their nonzero difference would be divisible by $qr > x^{2}/4 > H$. Since both integers lie in $(T, H]$, this is impossible. This proves the intersection assertion and \eqref{eq:group-intersection-at-candidate}.

For the probability bound, Lemma~\ref{lem:intermediate-joint-law} gives
\begin{equation*}
\log\Pi(E_{q,a})^{-1} \le U\log A^{-1} + \tau U\log H + O\left(\frac{U^{2}\log H}{w\log w}\right).
\end{equation*}
By \eqref{eq:intermediate-product-estimate}, $\log A^{-1} = O(\log_{2} x)$. Also $\tau\log H = O(\log_{3} x)$. Using \eqref{eq:intermediate-group-size-bound}, the preceding bound is therefore
\begin{equation*}
O\left(\frac{\log x}{\log_{3} x}\right) = o(\log x).
\end{equation*}
Exponentiating proves \eqref{eq:intermediate-inverse-group-probability}.
\end{proof}

Thus the reciprocal probabilities used as weights later grow more slowly than every fixed positive power of $x$. The size bound $U$ is essential here: we can use the entire group without its survival probability becoming too small for this estimate.

\subsection{Correlations and common prime factors}
\label{subsec:intermediate-correlation-ratios}

For two groups $E$ and $F$, the ratio 
\begin{equation*}
\frac{\Pi(E \cup F)}{\Pi(E)\Pi(F)}
\end{equation*}
compares their joint survival probability with the product of their separate survival probabilities. A ratio of $1$ means that the two survival events are independent. We need two versions of this comparison.

First, fix a reserve prime $q$ and consider distinct labels $a$ and $b$ modulo $q$. The groups $E_{q,a}$ and $E_{q,b}$ are disjoint as sets of integers, but their products can have common prime factors. Put
\begin{equation*}
G_{q}(a, b) := (N_{q,a}, N_{q,b}).
\end{equation*}

Second, fix a candidate $n$ and consider its groups modulo distinct reserve primes $q$ and $r$. Both contain $n$, so their survival events cannot be treated as independent without first accounting for the survival of $n$. We will condition on $n \in \mathcal{N}$. By \eqref{eq:group-intersection-at-candidate}, removing $n$ leaves disjoint sets. Define
\begin{equation*}
G_{n}(q, r) := \left(\prod_{m \, \in \, E_{q,n} \setminus \{n\}}m,\ \prod_{m \, \in \, E_{r,n} \setminus \{n\}}m\right).
\end{equation*}
Both kinds of greatest common divisor are squarefree. To state the error in the correlation estimates, put
\begin{equation*}
\Delta := \frac{U^{2}\log H}{w\log w}.
\end{equation*}
Since $U \ll \log x$, $\log H \asymp \log x$ and $w = (\log x)^{100}$, we have
\begin{equation}
\label{eq:joint-correlation-error-bound}
\Delta \ll (\log x)^{-90}.
\end{equation}

\begin{lemma}[Correlation ratios]
\label{lem:intermediate-correlation-ratios}
For $q \in \mathcal{R}$ and distinct labels $a$ and $b$ modulo $q$,
\begin{equation}
\label{eq:intermediate-unconditioned-ratio}
\frac{\Pi(E_{q,a} \cup E_{q,b})}{\Pi(E_{q,a})\Pi(E_{q,b})} = G_{q}(a, b)^{\tau}\exp(O(\Delta)).
\end{equation}
For $n \in \mathcal{C}$ and distinct $q, r \in \mathcal{R}$,
\begin{equation}
\label{eq:intermediate-conditioned-ratio}
\frac{q_{n}\Pi(E_{q,n} \cup E_{r,n})}{\Pi(E_{q,n})\Pi(E_{r,n})} = G_{n}(q, r)^{\tau}\exp(O(\Delta)).
\end{equation}
\end{lemma}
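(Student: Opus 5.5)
Both estimates come from applying the joint survival estimate, Lemma~\ref{lem:intermediate-joint-law}, to each of the three or four sets involved. The main terms then combine through the identity $\operatorname{rad}(XY) = \operatorname{rad}(X)\operatorname{rad}(Y)/(\operatorname{rad}(X),\operatorname{rad}(Y))$, valid for any positive integers $X$ and $Y$. All the sets we use have at most $2U$ members, so the lemma applies. Each application contributes an error $\exp(O(k^{2}\log H/(w\log w)))$ with $k \le 2U$, which is $\exp(O(\Delta))$. With a bounded number of applications the errors combine to $\exp(O(\Delta))$.

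For \eqref{eq:intermediate-unconditioned-ratio}, write $E = E_{q,a}$ and $F = E_{q,b}$. These are disjoint, with $\#E = k_{1} \le U$ and $\#F = k_{2} \le U$. By Lemma~\ref{lem:intermediate-group-geometry}, $N_{q,a}$ and $N_{q,b}$ are squarefree, so their radicals equal themselves. Lemma~\ref{lem:intermediate-joint-law} gives
\[
\Pi(E \cup F) = A^{k_{1}+k_{2}}\operatorname{rad}(N_{q,a}N_{q,b})^{-\tau}e^{O(\Delta)},
\]
together with $\Pi(E) = A^{k_{1}}N_{q,a}^{-\tau}e^{O(\Delta)}$ and the analogous formula for $F$. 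The powers of $A$ cancel in the ratio. By the radical identity, $\operatorname{rad}(N_{q,a}N_{q,b}) = N_{q,a}N_{q,b}/G_{q}(a,b)$, so the ratio is $G_{q}(a,b)^{\tau}e^{O(\Delta)}$.

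For \eqref{eq:intermediate-conditioned-ratio}, write $E = E_{q,n}$ and $F = E_{r,n}$, and put $E' = E\setminus\{n\}$ and $F' = F\setminus\{n\}$. By \eqref{eq:group-intersection-at-candidate}, $E \cup F$ is the disjoint union of $\{n\}$, $E'$ and $F'$, and has $k_{1}+k_{2}-1$ members. Let $P = \prod_{m\in E'}m$ and $R = \prod_{m\in F'}m$. Within each group the members are pairwise coprime, so $nP$ and $nR$ are squarefree with $(n,P) = (n,R) = 1$. Hence
\[
\operatorname{rad}(nPR) = n\operatorname{rad}(PR) = nPR/(P,R) = nPR/G_{n}(q,r).
\]
We also use $q_{n} = An^{-\tau}$ from Lemma~\ref{lem:intermediate-one-point-law}. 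In the numerator the powers of $A$ total $A^{1+k_{1}+k_{2}-1}$, which matches the $A^{k_{1}}A^{k_{2}}$ in the denominator. The powers of $n$ give $n^{-\tau}n^{-\tau}$ in the numerator and $(nP)^{-\tau}(nR)^{-\tau}$ in the denominator, so they cancel. The ratio is therefore $G_{n}(q,r)^{\tau}e^{O(\Delta)}$.

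There is no real obstacle. The only care needed is bookkeeping. First, we must check that the exponent $k^{2}\log H/(w\log w)$ is $O(\Delta)$ for $k \le 2U$; it is, at most $4\Delta$. Second, we must confirm the coprimality of $n$ with the other members of each group, which is what makes the radicals come out correctly. Lemma~\ref{lem:intermediate-group-geometry} supplies that coprimality.
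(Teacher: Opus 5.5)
Your proposal is correct and follows the paper's proof essentially step by step. Both arguments apply Lemma~\ref{lem:intermediate-joint-law} to each group and to their union, and compute the radicals $N_{q,a}N_{q,b}/G_{q}(a,b)$ and $nN_{1}N_{2}/G_{n}(q,r)$ from squarefreeness and the coprimality in Lemma~\ref{lem:intermediate-group-geometry}; the powers of $A$ then cancel, using that the union in the second case has one fewer member, and the second ratio is finished by multiplying by $q_{n} = An^{-\tau}$.
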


\begin{proof}
Each group has at most $U$ members, so Lemma~\ref{lem:intermediate-joint-law} applies to the two groups separately and to their union. For the first assertion, both group products are squarefree and the groups are disjoint. Thus
\begin{equation*}
\operatorname{rad}\left(\prod_{m \, \in \, E_{q,a} \cup E_{q,b}}m\right) = \frac{N_{q,a}N_{q,b}}{G_{q}(a, b)}.
\end{equation*}
Substitution in \eqref{eq:intermediate-joint-radical-law} cancels the powers of $A$ and leaves the factor $G_{q}(a, b)^{\tau}$. The errors in the three applications are all $O(\Delta)$, proving \eqref{eq:intermediate-unconditioned-ratio}.

For the second assertion, write $N_{q,n} = nN_{1}$ and $N_{r,n} = nN_{2}$. The group properties imply that $n$ is coprime to $N_{1}N_{2}$ and that both $N_{1}$ and $N_{2}$ are squarefree. Since the groups intersect only at $n$,
\begin{equation*}
\operatorname{rad}\left(\prod_{m \, \in \, E_{q,n} \cup E_{r,n}}m\right) = \frac{nN_{1}N_{2}}{G_{n}(q, r)}.
\end{equation*}
The union has one fewer member than the sum of the two group sizes. Hence the unconditioned ratio is $A^{-1}n^{\tau}G_{n}(q, r)^{\tau}\exp(O(\Delta))$. Multiplication by $q_{n} = An^{-\tau}$ gives \eqref{eq:intermediate-conditioned-ratio}.
\end{proof}

To see the conditional meaning of the second formula, note that any group $E$ containing $n$ satisfies
\begin{equation*}
\mathbb{P}(E \subseteq \mathcal{N} \mid n \in \mathcal{N}) = \frac{\Pi(E)}{q_{n}}.
\end{equation*}
Dividing the conditional joint probability by the product of the two conditional probabilities therefore gives exactly the left-hand side of \eqref{eq:intermediate-conditioned-ratio}.

The ratios need not be close to $1$ for every pair: a large common factor can increase the chance of joint survival. We next show that the factors $G^{\tau}$ are close to $1$ on average, for each of the two averages needed in the selection argument.

\subsection{Averaging the common factors}
\label{subsec:intermediate-common-factor-average}

We use two auxiliary probability spaces, solely to express finite averages over the groups. In the first, fix $q \in \mathcal{R}$ and choose an ordered pair of distinct labels $a, b$ uniformly modulo $q$; each pair has probability $1/(q(q - 1))$. Set $G := G_{q}(a, b)$. In the second, fix $n \in \mathcal{C}$ and choose an ordered pair of distinct primes $q, r \in \mathcal{R}$ with probability proportional to $1/(qr)$; set $G := G_{n}(q, r)$. Write $\mathbb{P}_{*}$ and $\mathbb{E}_{*}$ for probability and expectation in whichever of these two spaces is being used. These averages do not involve the preliminary random choices.

The weights $1/(qr)$ in the second average arise because the later selection rule assigns weights containing $1/q$ and $1/r$ to the two reserve primes. Their normalizing sum is
\begin{equation}
\label{eq:intermediate-prime-pair-normalizer}
\sum_{\substack{q, r \, \in \, \mathcal{R} \\ q \, \ne \, r}}\frac{1}{qr} = M^{2} - \sum_{q \, \in \, \mathcal{R}}\frac{1}{q^{2}} \asymp \frac{1}{(\log x)^{2}}.
\end{equation}
Here we used \eqref{eq:intermediate-reserve-reciprocals} and the bound $\sum_{q \, \in \, \mathcal{R}}q^{-2} = O(1/x) = o(M^{2})$. Thus the probability assigned to an individual ordered pair $(q, r)$ is
\begin{equation*}
\frac{1/(qr)}{\displaystyle\sum_{\substack{q', r' \, \in \, \mathcal{R} \\ q' \, \ne \, r'}}1/(q'r')}
\asymp \frac{(\log x)^{2}}{x^{2}},
\end{equation*}
since both primes lie in $(x/2, 3x/4]$.

We first bound the probability that a prescribed integer $d$ divides $G$. The integer $d$ and its prime factors remain fixed: we count the pairs of labels or reserve primes for which the divisibility holds. Here $\omega(d)$ denotes the number of distinct prime divisors of $d$.

\begin{lemma}[Counting common divisors]
\label{lem:intermediate-divisor-count}
For either auxiliary probability space and every squarefree $w$-rough integer $1 < d \le x$,
\begin{equation}
\label{eq:intermediate-divisor-count}
\mathbb{P}_{*}(d \mid G) \ll (\log x)^{2}\frac{(4U^{2})^{\omega(d)}}{d^{2}}.
\end{equation}
\end{lemma}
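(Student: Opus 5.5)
The plan is a direct counting argument. The event $d \mid G$ forces each prime $p \mid d$ to divide some member of each of the two groups. We take a union bound over which member each prime divides, which costs a factor at most $(2U)^{\omega(d)}$ per group. For each such assignment, the Chinese remainder theorem then puts the relevant label or reserve prime in a single residue class modulo $d$. Throughout we use that every prime $p \mid d$ is a preliminary prime: a prime dividing a candidate lies in $(w, v]$ by Lemma~\ref{lem:intermediate-one-point-law}. Hence $w < p \le x/2 < q$, so $p \nmid q$, and also $p \nmid h$ whenever $0 < |h| < U < w$.

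\emph{First space.} Fix $q$ and write the integers of a class $a \bmod q$ in $(T, H]$ as $n_{0} + hq$ with $0 \le h < U$, where $n_{0} \in (T, T + q]$ is the least one. If $d \mid N_{q,a}$, choose for each $p \mid d$ an index $h_{p}$ with $p \mid n_{0} + h_{p}q$. There are at most $U^{\omega(d)}$ maps $p \mapsto h_{p}$. For a fixed map, each condition determines $n_{0} \bmod p$, since $q$ is invertible modulo $p$. By the Chinese remainder theorem, $n_{0}$ then lies in one class modulo $d$. Because $n_{0}$ ranges over an interval of length $q$, there are at most $q/d + 1 \le 3q/d$ admissible values, using $d \le x < 2q$. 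Hence at most $3U^{\omega(d)}q/d$ labels $a$ satisfy $d \mid N_{q,a}$. Since $d \mid G_{q}(a, b)$ requires this for both $a$ and $b$, the number of ordered pairs is at most $9U^{2\omega(d)}q^{2}/d^{2}$. Dividing by $q(q - 1)$ gives $\mathbb{P}_{*}(d \mid G) \ll U^{2\omega(d)}/d^{2}$, which is stronger than \eqref{eq:intermediate-divisor-count}.

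\emph{Second space.} Fix $n$. The members of $E_{q,n}\setminus\{n\}$ have the form $n + hq$ with $0 < |h| < U$. If $d$ divides their product, choose for each $p \mid d$ such an $h_{p}$ with $p \mid n + h_{p}q$; there are at most $(2U)^{\omega(d)}$ choices. Note that $p \nmid n$, since otherwise $p \mid h_{p}q$, which is impossible. Because $p \nmid h_{p}$, the condition reads $q \equiv -n\overline{h_{p}} \bmod p$. By the Chinese remainder theorem, $q$ then lies in one class modulo $d$. Counting all integers in $(x/2, 3x/4]$ in that class, without using primality, gives at most $x/(4d) + 1 \ll x/d$ choices of $q$. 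The same bound holds for $r$. The number of ordered pairs $(q, r)$ with $d \mid G_{n}(q, r)$ is therefore $\ll (2U)^{2\omega(d)}x^{2}/d^{2}$. Each pair carries probability $\asymp (\log x)^{2}/x^{2}$ by the computation following \eqref{eq:intermediate-prime-pair-normalizer}. Multiplying gives exactly $(\log x)^{2}(4U^{2})^{\omega(d)}/d^{2}$.

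The only delicate point is checking the invertibility facts that make each divisibility condition a single congruence: $p \nmid q$, $p \nmid h$, and $p \nmid n$. Also necessary is $d \le x$, which keeps the $+1$ in the interval counts harmless. Both are settled by the facts listed in the first paragraph.
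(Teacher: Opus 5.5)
Your proof is correct and is essentially the paper's argument: a union bound over offset vectors, the Chinese remainder theorem to confine the representative in $(T, T + q]$ (or the reserve prime $q$) to one class modulo $d$, and in the second space the pair weight $\asymp (\log x)^{2}/x^{2}$. The differences are cosmetic: you bound the set of good labels or good primes for a single group and square it, where the paper counts joint offset vectors; and your opening claim that every $p \mid d$ lies in $(w, v]$ holds only on the event $d \mid G$, which is all you use (otherwise the probability is zero, as the paper notes).
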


\begin{proof}
Write $d = p_{1}\cdots p_{j}$, where $j := \omega(d)$ and the primes $p_{i}$ are distinct. Every prime factor of $G$ lies in $(w, v]$. Thus the probability is zero if $d$ has a prime factor greater than $v$, and we may suppose that all $p_{i}$ lie in $(w, v]$.

For the first average, let $b_{a}$ be the unique representative of the class $a \bmod q$ in $(T, T + q]$. Every member of $E_{q,a}$ can be written as $b_{a} + hq$, where $0 \le h < U$ is an integer. If $d \mid G_{q}(a, b)$, each prime $p_{i}$ divides one member of each group. Hence there are integer offsets satisfying
\begin{equation*}
0 \le h_{i}, \ell_{i} < U, \qquad
p_{i} \mid b_{a} + h_{i}q, \qquad
p_{i} \mid b_{b} + \ell_{i}q
\qquad (1 \le i \le j).
\end{equation*}
There are at most $U^{2j}$ possible vectors $(h_{1}, \ldots, h_{j}, \ell_{1}, \ldots, \ell_{j})$.

For each fixed vector, the congruences
\begin{equation*}
b_{a} \equiv -h_{i}q \bmod p_{i}, \qquad
b_{b} \equiv -\ell_{i}q \bmod p_{i}
\qquad (1 \le i \le j)
\end{equation*}
determine one class modulo $d$ for each of $b_{a}$ and $b_{b}$ by the Chinese remainder theorem. Each representative lies in an interval of length $q$, so the number of possible pairs is at most
\begin{equation*}
\left(\frac{q}{d} + 1\right)^{2} \le \frac{4x^{2}}{d^{2}}.
\end{equation*}
This also bounds the number of pairs of distinct labels satisfying the conditions for that vector. There are $q(q - 1) \gg x^{2}$ equally likely ordered pairs of distinct labels in total. Summing the counts over all offset vectors therefore gives
\begin{equation*}
\mathbb{P}_{*}(d \mid G)
\le U^{2j}\frac{4x^{2}/d^{2}}{q(q - 1)}
\ll \frac{U^{2j}}{d^{2}}.
\end{equation*}
This is stronger than \eqref{eq:intermediate-divisor-count}. Any overcounting in the sum over offset vectors is harmless for an upper bound.

For the second average, a member of $E_{q,n} \setminus \{n\}$ has the form $n + hq$ with $0 < |h| < U$. Thus $d \mid G_{n}(q, r)$ implies the existence of integer offsets satisfying
\begin{equation*}
0 < |h_{i}|, |\ell_{i}| < U, \qquad
p_{i} \mid n + h_{i}q, \qquad
p_{i} \mid n + \ell_{i}r
\qquad (1 \le i \le j).
\end{equation*}
There are fewer than $2U$ choices for each offset, and hence at most $(2U)^{2j} = (4U^{2})^{j}$ possible offset vectors.

If these divisibility conditions hold, then $p_{i} \nmid n$ by pairwise coprimality within each group. Also $h_{i}$ and $\ell_{i}$ are invertible modulo $p_{i}$, since their absolute values are positive and less than $U < w < p_{i}$. For a fixed offset vector, the conditions therefore give
\begin{equation*}
q \equiv -nh_{i}^{-1} \bmod p_{i}, \qquad
r \equiv -n\ell_{i}^{-1} \bmod p_{i}
\qquad (1 \le i \le j).
\end{equation*}
The Chinese remainder theorem determines one class modulo $d$ for each of $q$ and $r$. Ignoring the requirements that they be distinct primes in $\mathcal{R}$, and counting integers in $(0, x]$ instead, gives at most
\begin{equation*}
\left(\frac{x}{d} + 1\right)^{2} \le \frac{4x^{2}}{d^{2}}
\end{equation*}
possible pairs for each vector.

Each admissible prime pair has probability $O((\log x)^{2}/x^{2})$ by \eqref{eq:intermediate-prime-pair-normalizer}. Multiplying the number of offset vectors by the bound for the number of pairs per vector and the bound for the probability of each pair gives
\begin{equation*}
\mathbb{P}_{*}(d \mid G)
\ll (4U^{2})^{j}\frac{x^{2}}{d^{2}}\frac{(\log x)^{2}}{x^{2}}
= (\log x)^{2}\frac{(4U^{2})^{j}}{d^{2}},
\end{equation*}
as required.
\end{proof}

We now use the common-divisor estimate to show that the mean contribution of $G^{\tau}$ is close to $1$.

\begin{lemma}[Mean common-factor contribution] \label{lem:intermediate-gcd-average} 
In either auxiliary probability space, 
\begin{equation} 
\label{eq:intermediate-gcd-moment} 
\mathbb{E}_{*}G^{\tau} = 1 + O\left((\log x)^{-90}\right). 
\end{equation} 
The bound is uniform in the fixed reserve prime $q$ in the first case and in the fixed candidate $n$ in the second. 
\end{lemma}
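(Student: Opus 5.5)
We write $G^{\tau}$ as a sum over its divisors and then use Lemma~\ref{lem:intermediate-divisor-count}. Since $G$ is squarefree, define the multiplicative function $g(d) := \prod_{p \mid d}(p^{\tau} - 1)$ on squarefree $d$. Then
\begin{equation*}
G^{\tau} = \prod_{p \mid G}\bigl(1 + (p^{\tau} - 1)\bigr) = \sum_{d \mid G} g(d),
\end{equation*}
and taking expectations gives
\begin{equation*}
\mathbb{E}_{*}G^{\tau} = 1 + \sum_{d > 1} g(d)\,\mathbb{P}_{*}(d \mid G).
\end{equation*}
Here $d$ runs over squarefree integers whose prime factors all lie in $(w, v]$, since these are the only primes dividing $G$. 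Every term is nonnegative, so the lower bound $\mathbb{E}_{*}G^{\tau} \ge 1$ is immediate. The whole task is to show that the sum over $d > 1$ is $O((\log x)^{-90})$.

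Lemma~\ref{lem:intermediate-divisor-count} is stated only for $d \le x$, so we split at $d = x$.

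\emph{Small divisors, $d \le x$.} For $p \le v$ we have $p^{\tau} \le v^{\tau} \le e^{t} \le (\log_{2} x)^{3}$, so $p^{\tau} - 1 \le p^{\tau}$. The lemma then bounds this range by
\begin{equation*}
(\log x)^{2}\sum_{1 < d \le x} \frac{\prod_{p \mid d}\bigl(4U^{2}p^{\tau}\bigr)}{d^{2}}
\le (\log x)^{2}\Biggl(\prod_{w < p \le v}\Bigl(1 + \frac{4U^{2}p^{\tau}}{p^{2}}\Bigr) - 1\Biggr).
\end{equation*}
To bound $\sum_{p > w} p^{\tau - 2}$, use $\tau \le 3\log_{3} x/\log x$, so that $p^{\tau} \le v^{\tau} \ll (\log_{2} x)^{3}$. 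This gives
\begin{equation*}
\sum_{p > w} \frac{4U^{2}p^{\tau}}{p^{2}} \ll \frac{U^{2}(\log_{2} x)^{3}}{w} \ll \frac{(\log x)^{2}(\log_{2} x)^{3}}{(\log x)^{100}}.
\end{equation*}
The product minus $1$ is therefore $O((\log x)^{-97})$. After the factor $(\log x)^{2}$ the contribution is $O((\log x)^{-95})$, which is within the required bound. The prefactor $(\log x)^{2}$ costs nothing here because $w = (\log x)^{100}$ is so large.

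\emph{Large divisors, $d > x$.} I expect this range to be the main obstacle, because Lemma~\ref{lem:intermediate-divisor-count} does not apply. The plan is to use size constraints together with Rankin's trick.

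First, $G$ divides a product of at most $U$ candidates, each at most $H$, so $G \le H^{U}$. Hence only $\omega(d) \le U$ and $d \le H^{U}$ occur. I would check whether the counting argument of Lemma~\ref{lem:intermediate-divisor-count} goes through unchanged for larger $d$. In the first average, the count of CRT pairs per offset vector is $(q/d + 1)^{2}$, which is at most $4$ once $d > q$. This gives a bound of order
\begin{equation*}
\frac{U^{2j}}{x^{2}} \qquad \text{instead of} \qquad \frac{U^{2j}}{d^{2}}.
\end{equation*}
That alone is too weak, since summing over $d$ up to $H^{U}$ loses badly.

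The better route is to avoid large $d$ altogether by reducing to the small range. If $d \mid G$ with $d > x$, then $d$ has a divisor $d' \mid d$ with $x^{1/2} < d' \le x$: all prime factors of $d$ are at most $v < x$, so we can add primes one at a time until the product first exceeds $x^{1/2}$. Moreover, $g(d) \le G^{\tau} \le H^{U\tau}$, and
\begin{equation*}
H^{U\tau} = e^{O(Ut)} = x^{o(1)} \qquad \text{by } Ut \ll \frac{\log x}{\log_{2} x}.
\end{equation*}
So the large-divisor contribution is at most $x^{o(1)}\,\mathbb{P}_{*}(G > x)$. We then bound
\begin{equation*}
\mathbb{P}_{*}(G > x) \le \sum_{x^{1/2} < d' \le x} \mathbb{P}_{*}(d' \mid G) \ll (\log x)^{2}\sum_{d' > x^{1/2}} \frac{(4U^{2})^{\omega(d')}}{d'^{2}}.
\end{equation*}
This sum is at most $x^{-1/2 + o(1)}$, because $\sum_{d'} (4U^{2})^{\omega(d')} d'^{-3/2}$ converges to a bounded product over $p > w$. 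Hence the large range contributes $x^{-1/2 + o(1)}$, which is negligible.

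Combining the two ranges gives $\mathbb{E}_{*}G^{\tau} = 1 + O((\log x)^{-90})$. The bound is uniform in $q$ and in $n$, because Lemma~\ref{lem:intermediate-divisor-count} holds uniformly in both.
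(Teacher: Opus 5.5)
Your plan is the paper's. You expand $G^{\tau}=\sum_{d\mid G}g(d)$ and bound the divisors $d\le x$ using Lemma~\ref{lem:intermediate-divisor-count} and an Euler product. You bound the rest by $\max G^{\tau}=x^{o(1)}$ times $\mathbb{P}_{*}(G>x)$, controlling the latter through an intermediate divisor and the comparison $d^{-2}\le x^{-c}d^{-3/2}$. Cutting at $d=x$ rather than at $G=x$ is an inessential difference.

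\textbf{The gap.} The intermediate-divisor step is not justified as written. Adjoining the prime factors one at a time until the product first exceeds $x^{1/2}$ only gives $d'\le x^{1/2}\cdot v$, because the last prime adjoined can be as large as $v=x/2$. For instance, a partial product just below $x^{1/2}$ followed by a prime near $x^{0.9}$ lands well above $x$. That is outside the range $d\le x$ where Lemma~\ref{lem:intermediate-divisor-count} is available, so the bound $p\le v<x$ on the prime factors is not the relevant fact. The claim is nevertheless true, by a case split:
\begin{itemize}
\item if $G$ has a prime factor $p>x^{1/2}$, take $d'=p\in(x^{1/2},v]$;
\item otherwise every prime factor is at most $x^{1/2}$, and the first partial product exceeding $x^{1/2}$ is at most $x^{1/2}\cdot x^{1/2}=x$.
\end{itemize}
Equivalently, adjoin the primes in decreasing order. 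This is exactly why the paper first disposes of prime factors exceeding $x^{1/3}$ by a union bound over single primes, and only then builds a divisor in $(x^{1/3},x^{2/3}]$. Your subsequent union bound over $d'\in(x^{1/2},x]$ already covers both cases, so nothing else needs to change.

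\textbf{Two smaller points.} First, the per-term inequality $g(d)\le G^{\tau}$ does not by itself make the large-divisor contribution at most $x^{o(1)}\mathbb{P}_{*}(G>x)$. Summing it over the divisors $d>x$ of $G$ brings in their number, which nothing in your argument controls. Use instead the nonnegativity of $g$ and the identity you started from:
\begin{equation*}
\sum_{d>x}g(d)\,\mathbb{P}_{*}(d\mid G)=\mathbb{E}_{*}\sum_{\substack{d\mid G\\ d>x}}g(d)\le\mathbb{E}_{*}\bigl(G^{\tau}\mathbf{1}_{\{G>x\}}\bigr).
\end{equation*}
Second, the inequality $d'^{-2}\le x^{-1/4}d'^{-3/2}$ for $d'>x^{1/2}$ gives a tail of size $O(x^{-1/4})$ rather than $x^{-1/2+o(1)}$. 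This is still ample for the conclusion.
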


\begin{proof}
The divisor estimate is available only for $d \le x$, so we first bound the contribution from $G > x$. Every prime factor of $G$ is less than or equal to $v < x$. By \eqref{eq:intermediate-divisor-count} and the union bound, the probability that $G$ has a prime factor greater than $x^{1/3}$ is at most
\begin{equation*}
O\left((\log x)^{2}U^{2}\sum_{p \, > \, x^{1/3}}\frac{1}{p^{2}}\right) \ll (\log x)^{2}U^{2}x^{-1/3}.
\end{equation*}
If $G > x$ and all its prime factors are less than or equal to $x^{1/3}$, multiply distinct prime factors until their product first exceeds $x^{1/3}$. The resulting divisor $d$ satisfies $x^{1/3} < d \le x^{2/3}$. Applying \eqref{eq:intermediate-divisor-count} to these possible divisors gives
\begin{equation*}
\mathbb{P}_{*}(G > x) \ll (\log x)^{2}U^{2}x^{-1/3} + (\log x)^{2}\sum_{\substack{x^{1/3} \, < \, d \, \le \, x^{2/3} \\ d \text{ squarefree and } w\text{-rough}}}\frac{(4U^{2})^{\omega(d)}}{d^{2}}.
\end{equation*}
For $d > x^{1/3}$, we have $d^{-2} \le x^{-1/6}d^{-3/2}$. Extending the sum to all squarefree $w$-rough positive integers therefore bounds its contribution by
\begin{equation*}
(\log x)^{2}x^{-1/6}\prod_{p \, > \, w}\left(1 + \frac{4U^{2}}{p^{3/2}}\right).
\end{equation*}
The product is $1 + o(1)$, because its logarithm is $O(U^{2}/\sqrt{w}) = o(1)$. Since $U \ll \log x$, we obtain
\begin{equation}
\label{eq:large-common-factor-probability-bound}
\mathbb{P}_{*}(G > x) \le x^{-1/6 + o(1)}.
\end{equation}

In either average, $G \le H^{U}$. Also $\tau\log H = O(\log_{3} x)$, so \eqref{eq:intermediate-group-size-bound} gives
\begin{equation*}
G^{\tau} \le \exp(\tau U\log H) \le \exp\left(O\left(\frac{\log x}{\log_{2} x}\right)\right) = x^{o(1)}.
\end{equation*}
Multiplying this bound by \eqref{eq:large-common-factor-probability-bound} yields
\begin{equation}
\label{eq:intermediate-large-gcd-moment}
\mathbb{E}_{*}\left(G^{\tau}\mathbf{1}_{\{G > x\}}\right) \ll x^{-1/8}.
\end{equation}
Here $\mathbf{1}_{\{G > x\}}$ is the indicator of the event $G > x$.

We now bound the contribution from $G \le x$. Since $G$ is squarefree, expanding
\begin{equation*}
G^{\tau} = \prod_{p \, \mid \, G}\left(1 + (p^{\tau} - 1)\right)
\end{equation*}
gives
\begin{equation*}
G^{\tau} - 1 = \sum_{\substack{d \, \mid \, G \\ d \, > \, 1}}\ \prod_{p \, \mid \, d}(p^{\tau} - 1).
\end{equation*}
On the event $G \le x$, every divisor in this sum is less than or equal to $x$. Each is also squarefree and $w$-rough. Interchanging the finite sum with expectation therefore gives
\begin{equation*}
\mathbb{E}_{*}\left((G^{\tau} - 1)\mathbf{1}_{\{G \le x\}}\right) = \sum_{\substack{1 \, < \, d \, \le \, x \\ d \text{ squarefree and } w\text{-rough}}}
\mathbb{P}_{*}(d \mid G,\ G \le x)\prod_{p \, \mid \, d}(p^{\tau} - 1).
\end{equation*}
All coefficients are nonnegative, since $\tau > 0$. We may thus replace $\mathbb{P}_{*}(d \mid G,\ G \le x)$ by the larger probability $\mathbb{P}_{*}(d \mid G)$ and apply \eqref{eq:intermediate-divisor-count} to obtain
\begin{equation*}
\mathbb{E}_{*}\left((G^{\tau} - 1)\mathbf{1}_{\{G \le x\}}\right) \ll (\log x)^{2}
\sum_{\substack{1 \, < \, d \, \le \, x \\ d \text{ squarefree and } w\text{-rough}}}
\frac{(4U^{2})^{\omega(d)}}{d^{2}}\prod_{p \, \mid \, d}(p^{\tau} - 1).
\end{equation*}

Squarefreeness allows us to write each summand as
\begin{equation*}
\frac{(4U^{2})^{\omega(d)}}{d^{2}}\prod_{p \, \mid \, d}(p^{\tau} - 1)
= \prod_{p \, \mid \, d}\frac{4U^{2}(p^{\tau} - 1)}{p^{2}}.
\end{equation*}
Removing the restriction $d \le x$ adds only nonnegative terms. The resulting sum over all squarefree $w$-rough integers greater than $1$ is an Euler product with its constant term removed:
\begin{equation*}
\sum_{\substack{d \, > \, 1 \\ d \text{ squarefree and } w\text{-rough}}}
\prod_{p \, \mid \, d}\frac{4U^{2}(p^{\tau} - 1)}{p^{2}}
=
\prod_{p \, > \, w}\left(1 + \frac{4U^{2}(p^{\tau} - 1)}{p^{2}}\right) - 1.
\end{equation*}
The divisor estimate was used only for $d \le x$; extending the numerical sum afterward does not require that estimate for larger $d$.

To bound this product, use $\tau = o(1)$ and $w^{\tau} = 1 + o(1)$ to obtain
\begin{equation*}
\sum_{p \, > \, w}\frac{p^{\tau} - 1}{p^{2}}
\le \sum_{m \, > \, w}m^{-2 + \tau}
\ll w^{-1 + \tau}
\ll \frac{1}{w}.
\end{equation*}
In particular, the product converges, and its logarithm is $O(U^{2}/w) = o(1)$. Its value minus $1$ is therefore also $O(U^{2}/w)$. Consequently,
\begin{equation}
\label{eq:intermediate-small-gcd-moment}
\mathbb{E}_{*}\left((G^{\tau} - 1)\mathbf{1}_{\{G \le x\}}\right)
\ll \frac{(\log x)^{2}U^{2}}{w}
\ll (\log x)^{-90}.
\end{equation}

Finally, $G^{\tau} \ge 1$, so
\begin{equation*}
\begin{split}
0 \le \mathbb{E}_{*}G^{\tau} - 1
&= \mathbb{E}_{*}\left((G^{\tau} - 1)\mathbf{1}_{\{G \le x\}}\right)
 + \mathbb{E}_{*}\left((G^{\tau} - 1)\mathbf{1}_{\{G > x\}}\right) \\
&\le \mathbb{E}_{*}\left((G^{\tau} - 1)\mathbf{1}_{\{G \le x\}}\right)
 + \mathbb{E}_{*}\left(G^{\tau}\mathbf{1}_{\{G > x\}}\right).
\end{split}
\end{equation*}
The bounds \eqref{eq:intermediate-small-gcd-moment} and \eqref{eq:intermediate-large-gcd-moment} give $O((\log x)^{-90} + x^{-1/8}) = O((\log x)^{-90})$, proving \eqref{eq:intermediate-gcd-moment}.
\end{proof}

Together with \eqref{eq:joint-correlation-error-bound}, Lemma~\ref{lem:intermediate-gcd-average} shows that the ratio in \eqref{eq:intermediate-unconditioned-ratio} has mean $1 + O((\log x)^{-90})$ over distinct labels for each fixed reserve prime. The conditional ratio in \eqref{eq:intermediate-conditioned-ratio} has the same mean under the weighted average over distinct reserve primes for each fixed candidate. These are the two estimates needed to control the selection weights: one for the total weight at a reserve prime, and one for the total weight available to cover a surviving candidate.

\section{Covering composite survivors}
\label{sec:intermediate-composite-covering}

We now choose the classes modulo the primes in $\mathcal{R}$. Following \cite[Sections~4--5]{GPT2026}, we give a group weight zero unless all its members survive the preliminary sieve, and otherwise give it the reciprocal of its survival probability. The estimates of the preceding section will show that these weights can be used to cover almost all the composite survivors in expectation. The two quantities to control are the total weight at a reserve prime and the total weight available to cover a surviving candidate. The first must be small enough to define a probability distribution; the second must be large enough to make noncoverage unlikely.

Throughout this section, $t$ remains in the range \eqref{eq:intermediate-tilt-range}, and all estimates are uniform in this range. Recall that $E_{q,a}$ consists of the members of $\mathcal{C}$ congruent to $a$ modulo $q$, and that $\Pi(E_{q,a})$ is the probability that every member of this group survives. We will also use
\begin{equation*}
q_{n} = \mathbb{P}(n \in \mathcal{N}), \qquad q_{n} \le Q = AT^{-\tau}, \qquad M = \sum_{q \, \in \, \mathcal{R}}\frac{1}{q}.
\end{equation*}
The bound for $q_{n}$ holds for every $n \in \mathcal{C}$ by \eqref{eq:intermediate-probability-comparison}. Until the residue classes in $\mathcal{R}$ are chosen, all probabilities and expectations refer to the preliminary sieve. In particular, the auxiliary probability spaces used to average common factors in the preceding section have served their purpose; their estimates will enter here when we sum over pairs of groups.

\subsection{Inverse-probability weights}
\label{subsec:intermediate-inverse-probability-weights}

For $q \in \mathcal{R}$ and $0 \le a < q$, put
\begin{equation}
\label{eq:intermediate-selection-weights}
X_{q,a} := \frac{\mathbf{1}_{\{E_{q,a} \subseteq \mathcal{N}\}}}{\Pi(E_{q,a})}.
\end{equation}
Here the numerator is $1$ when the entire group survives and $0$ otherwise. The denominator is positive by Lemma~\ref{lem:intermediate-joint-law}. Thus a group that survives with small probability receives a large weight when it does survive, and the two factors cancel in expectation:
\begin{equation*}
\mathbb{E}X_{q,a} = \frac{\mathbb{P}(E_{q,a} \subseteq \mathcal{N})}{\Pi(E_{q,a})} = 1.
\end{equation*}
This also holds for an empty group, whose survival probability and weight are both $1$. Empty groups will cover no candidates, but retaining their labels makes the averages over all residue classes exact.

To measure the total weight at a reserve prime, define
\begin{equation*}
Z_{q} := \frac{1}{q}\sum_{a \, = \, 0}^{q - 1}X_{q,a}, \qquad \mathbb{E}Z_{q} = 1.
\end{equation*}
The factor $1/q$ averages over the $q$ available classes. We will use the individual quantities $X_{q,a}/q$ to assign selection probabilities, so $Z_{q}$ is their total mass.

For $n \in \mathcal{C}$, write $X_{q,n}$ for the weight of its group $E_{q,n}$ and put
\begin{equation}
\label{eq:intermediate-total-weight}
W_{n} := \sum_{q \, \in \, \mathcal{R}}\frac{X_{q,n}}{q}.
\end{equation}
This sums the weights available to cover $n$ over all reserve primes. If $n$ does not survive, none of its groups survives in full, so $W_{n} = 0$. We are therefore interested in its size conditional on $n \in \mathcal{N}$. Since $n \in E_{q,n}$,
\begin{equation*}
\mathbb{E}(X_{q,n} \mid n \in \mathcal{N}) = \frac{\mathbb{P}(E_{q,n} \subseteq \mathcal{N} \mid n \in \mathcal{N})}{\Pi(E_{q,n})} = \frac{1}{q_{n}}.
\end{equation*}
Indeed, the conditional probability in the numerator is $\Pi(E_{q,n})/q_{n}$. Summing over $q$ gives
\begin{equation}
\label{eq:composite-weight-conditional-mean}
\mathbb{E}(W_{n} \mid n \in \mathcal{N}) = \frac{M}{q_{n}}, \qquad \mathbb{E}\left(\left.\frac{q_{n}W_{n}}{M}\,\right|\,n \in \mathcal{N}\right) = 1.
\end{equation}
The reciprocal weighting has therefore given us two exact means: $1$ for the total mass at a prime, and $M/q_{n}$ for the weight available to a surviving candidate. We next show that both quantities are usually close to their means.

\subsection{Second-moment estimates}
\label{subsec:intermediate-second-moments}

An expectation alone does not provide this assurance: a nonnegative random variable may usually be zero and occasionally take a large value. The variance measures its mean squared deviation from its expectation. For a random variable $Y$ with $\mathbb{E}Y = 1$,
\begin{equation*}
\operatorname{Var}(Y) = \mathbb{E}(Y - 1)^{2} = \mathbb{E}Y^{2} - 1.
\end{equation*}
Thus it suffices to bound the second moment close to $1$. Expanding the square of a sum brings in products of two weights, whose expectations are precisely the correlation ratios estimated in the preceding section.

\begin{lemma}[Two second moments]
\label{lem:intermediate-second-moments}
Uniformly for $q \in \mathcal{R}$ and $n \in \mathcal{C}$,
\begin{equation}
\label{eq:intermediate-second-moment-bounds}
\begin{split}
\mathbb{E}(Z_{q} - 1)^{2} &\ll (\log x)^{-90}, \\
\mathbb{E}\left(\left.\left(\frac{q_{n}W_{n}}{M} - 1\right)^{2}\,\right|\,n \in \mathcal{N}\right) &\ll (\log x)^{-90}.
\end{split}
\end{equation}
\end{lemma}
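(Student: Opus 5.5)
Since both quantities have mean $1$ by the identities preceding the statement, it suffices in each case to show that the second moment is $1 + O((\log x)^{-90})$. The plan is to expand the square of the defining sum, evaluate each product of two weights through the correlation ratios of Lemma~\ref{lem:intermediate-correlation-ratios}, and then average the resulting common-factor terms $G^{\tau}$ with Lemma~\ref{lem:intermediate-gcd-average}.

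\proofstep{The total mass at a reserve prime}
Fix $q\in\mathcal{R}$ and write
\begin{equation*}
\mathbb{E}Z_{q}^{2} = q^{-2}\sum_{a}\mathbb{E}X_{q,a}^{2} + q^{-2}\sum_{a\ne b}\mathbb{E}X_{q,a}X_{q,b}.
\end{equation*}
For the diagonal terms, $\mathbb{E}X_{q,a}^{2} = \Pi(E_{q,a})^{-1} = x^{o(1)}$ by \eqref{eq:intermediate-inverse-group-probability}. Their total contribution is therefore $q^{-1}x^{o(1)} \ll x^{-1/2}$.

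For the off-diagonal terms, the groups $E_{q,a}$ and $E_{q,b}$ are disjoint, and
\begin{equation*}
\mathbb{E}X_{q,a}X_{q,b} = \frac{\Pi(E_{q,a}\cup E_{q,b})}{\Pi(E_{q,a})\Pi(E_{q,b})} = G_{q}(a,b)^{\tau}e^{O(\Delta)}
\end{equation*}
by \eqref{eq:intermediate-unconditioned-ratio}. Normalizing the double sum by $q(q-1)$ turns it into the first auxiliary average. Lemma~\ref{lem:intermediate-gcd-average} and \eqref{eq:joint-correlation-error-bound} then give the value $(1-1/q)(1+O((\log x)^{-90}))$. Adding the diagonal contribution, we get $\mathbb{E}Z_{q}^{2} = 1+O((\log x)^{-90})$.

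\proofstep{The weight available to a surviving candidate}
Fix $n\in\mathcal{C}$ and condition on $n\in\mathcal{N}$. Then
\begin{equation*}
\mathbb{E}(W_{n}^{2}\mid n\in\mathcal{N}) = \sum_{q,r}\frac{1}{qr}\,\mathbb{E}(X_{q,n}X_{r,n}\mid n\in\mathcal{N}).
\end{equation*}
For the diagonal terms $q=r$, the conditional expectation is $\Pi(E_{q,n})^{-1}q_{n}^{-1}$. After multiplying by $q_{n}^{2}/M^{2}$, their contribution is
\begin{equation*}
\frac{q_{n}}{M^{2}}\sum_{q}\frac{x^{o(1)}}{q^{2}} \ll \frac{(\log x)^{2}x^{o(1)}}{x},
\end{equation*}
which is negligible.

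For $q\ne r$, the conditional expectation equals $\Pi(E_{q,n}\cup E_{r,n})/\bigl(q_{n}\Pi(E_{q,n})\Pi(E_{r,n})\bigr)$. Multiplying by $q_{n}^{2}$ gives exactly the left side of \eqref{eq:intermediate-conditioned-ratio}, which is $G_{n}(q,r)^{\tau}e^{O(\Delta)}$. The off-diagonal contribution is therefore
\begin{equation*}
\frac{1}{M^{2}}\sum_{q\ne r}\frac{G_{n}(q,r)^{\tau}e^{O(\Delta)}}{qr}
= \frac{M^{2}-\sum_{q}q^{-2}}{M^{2}}\,\mathbb{E}_{*}G^{\tau}e^{O(\Delta)},
\end{equation*}
where $\mathbb{E}_{*}$ is the second auxiliary average. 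By \eqref{eq:intermediate-prime-pair-normalizer}, the prefactor is $1+O((\log x)^{2}/x)$. Lemma~\ref{lem:intermediate-gcd-average} then gives $1+O((\log x)^{-90})$, which completes the second bound.

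\proofstep{Main obstacle}
The step needing most care is the bookkeeping in the conditional case. One has to check that the normalization of the second auxiliary space matches the weights $1/(qr)$ exactly, and that the factor $q_{n}$ in \eqref{eq:intermediate-conditioned-ratio} arises precisely from the conditioning. One must also confirm that the $e^{O(\Delta)}$ errors can be pulled out uniformly, since $G^{\tau}\ge 1$ and the errors are $1+O(\Delta)$ uniformly in $q$, $r$ and $n$.
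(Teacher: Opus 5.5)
Your proposal is correct and follows the paper's proof essentially step for step. In both arguments you expand the squares, bound the diagonal terms by $x^{-1+o(1)}$ using \eqref{eq:intermediate-inverse-group-probability}, and evaluate the off-diagonal terms through Lemma~\ref{lem:intermediate-correlation-ratios} and the two auxiliary averages of Lemma~\ref{lem:intermediate-gcd-average}; the only cosmetic difference is that you evaluate the prefactor $1 - M^{-2}\sum_{q}q^{-2}$ asymptotically, whereas the paper uses only that it is at most $1$, since an upper bound for the second moment suffices.
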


\begin{proof}
For distinct labels $a$ and $b$, multiplying the indicators in \eqref{eq:intermediate-selection-weights} gives the indicator that both groups survive. Consequently,
\begin{equation*}
\mathbb{E}(X_{q,a}X_{q,b}) = \frac{\Pi(E_{q,a} \cup E_{q,b})}{\Pi(E_{q,a})\Pi(E_{q,b})}.
\end{equation*}
By \eqref{eq:joint-correlation-error-bound}, \eqref{eq:intermediate-unconditioned-ratio} and Lemma~\ref{lem:intermediate-gcd-average}, the average of these terms over the $q(q - 1)$ ordered pairs of distinct labels is $1 + O((\log x)^{-90})$. Their contribution to $\mathbb{E}Z_{q}^{2}$ is therefore
\begin{equation*}
\frac{1}{q^{2}}\sum_{\substack{0 \, \le \, a, b \, < \, q \\ a \, \ne \, b}}\mathbb{E}(X_{q,a}X_{q,b}) = \left(1 - \frac{1}{q}\right)\left(1 + O\left((\log x)^{-90}\right)\right).
\end{equation*}
The terms with $a = b$ require a separate bound. Squaring an indicator leaves it unchanged, so
\begin{equation*}
\frac{1}{q^{2}}\sum_{a \, = \, 0}^{q - 1}\mathbb{E}X_{q,a}^{2} = \frac{1}{q^{2}}\sum_{a \, = \, 0}^{q - 1}\Pi(E_{q,a})^{-1} \le x^{-1 + o(1)}
\end{equation*}
by \eqref{eq:intermediate-inverse-group-probability} and $q > x/2$. This is $o((\log x)^{-90})$. Adding the two contributions gives
\begin{equation*}
\mathbb{E}Z_{q}^{2} \le 1 + O((\log x)^{-90}).
\end{equation*}
Since $\mathbb{E}Z_{q} = 1$, the first assertion follows.

For the second assertion, we condition throughout on $n \in \mathcal{N}$. Every group under consideration contains $n$, so the same indicator calculations give
\begin{equation}
\label{eq:intermediate-conditional-weight-identities}
\begin{split}
q_{n}^{2}\mathbb{E}(X_{q,n}^{2} \mid n \in \mathcal{N}) &= \frac{q_{n}}{\Pi(E_{q,n})}, \\
q_{n}^{2}\mathbb{E}(X_{q,n}X_{r,n} \mid n \in \mathcal{N}) &= \frac{q_{n}\Pi(E_{q,n} \cup E_{r,n})}{\Pi(E_{q,n})\Pi(E_{r,n})} \qquad (q \ne r).
\end{split}
\end{equation}
For example, the second identity follows by dividing the conditional joint-survival probability $\Pi(E_{q,n} \cup E_{r,n})/q_{n}$ by the two denominators in the weights, and then multiplying by $q_{n}^{2}$.

Using \eqref{eq:intermediate-total-weight}, we expand the conditional second moment as
\begin{equation*}
\mathbb{E}\left(\left.\left(\frac{q_{n}W_{n}}{M}\right)^{2}\,\right|\,n \in \mathcal{N}\right) = \frac{1}{M^{2}}\sum_{q, r \, \in \, \mathcal{R}}\frac{q_{n}^{2}\mathbb{E}(X_{q,n}X_{r,n} \mid n \in \mathcal{N})}{qr}.
\end{equation*}
By \eqref{eq:intermediate-conditional-weight-identities}, for $q \ne r$ the numerator inside the sum is the conditional correlation ratio in \eqref{eq:intermediate-conditioned-ratio}. Its average with weights proportional to $1/(qr)$ is $1 + O((\log x)^{-90})$, by the second average in Lemma~\ref{lem:intermediate-gcd-average} and \eqref{eq:joint-correlation-error-bound}. Hence the off-diagonal contribution is
\begin{equation*}
\left(\frac{1}{M^{2}}\sum_{\substack{q, r \, \in \, \mathcal{R} \\ q \, \ne \, r}}\frac{1}{qr}\right)\left(1 + O\left((\log x)^{-90}\right)\right) \le 1 + O\left((\log x)^{-90}\right).
\end{equation*}
The factor in parentheses accounts for the normalization of that auxiliary average; it is less than or equal to $1$ because $M^{2}$ includes the diagonal terms as well.

For the diagonal terms, $q_{n} \le 1$ and \eqref{eq:intermediate-inverse-group-probability} give $q_{n}/\Pi(E_{q,n}) \le x^{o(1)}$. Also $q > x/2$, so
\begin{equation*}
\frac{1}{M^{2}}\sum_{q \, \in \, \mathcal{R}}\frac{1}{q^{2}} \le \frac{2}{xM} \ll \frac{\log x}{x},
\end{equation*}
where we used \eqref{eq:intermediate-reserve-reciprocals}. The diagonal contribution is therefore
\begin{equation*}
\frac{1}{M^{2}}\sum_{q \, \in \, \mathcal{R}}\frac{1}{q^{2}}\frac{q_{n}}{\Pi(E_{q,n})} \le x^{-1 + o(1)} = o\left((\log x)^{-90}\right).
\end{equation*}
The conditional second moment is thus less than or equal to $1 + O((\log x)^{-90})$. Its conditional mean is $1$ by \eqref{eq:composite-weight-conditional-mean}; subtracting the square of that mean proves the second assertion.
\end{proof}

For later use, recall how a squared-deviation bound controls exceptional events. If $\mathbb{E}Y = 1$, then for any $a > 0$,
\begin{equation*}
\mathbb{P}(|Y - 1| \ge a) \le \frac{\mathbb{E}(Y - 1)^{2}}{a^{2}},
\end{equation*}
because $(Y - 1)^{2} \ge a^{2}$ on the indicated event. This is Chebyshev's inequality, and the same argument applies under a conditional probability distribution. The second estimate in \eqref{eq:intermediate-second-moment-bounds} will therefore show that a surviving candidate rarely has much less than its expected weight $M/q_{n}$.

\subsection{Choosing the residue classes}
\label{subsec:intermediate-truncated-selection}

Fix the preliminary choices for the moment, so that all the weights are now known numbers. For each $q \in \mathcal{R}$ with $Z_{q} \le 2$, assign probability $X_{q,a}/(2q)$ to each label $a$, and add all the remaining probability to the zero label. This is possible because
\begin{equation*}
\sum_{a \, = \, 0}^{q - 1}\frac{X_{q,a}}{2q} = \frac{Z_{q}}{2} \le 1.
\end{equation*}
If $Z_{q} > 2$, choose the zero label with probability $1$. Choose independently for different $q$, conditional on the preliminary choices, and take $a_{q}$ to be the chosen class. The constants $2$ and $1/2$ ensure that the assigned probabilities have total mass less than or equal to $1$ whenever we use the group weights.

The zero class contains no candidates: every prime factor of a member of $\mathcal{C}$ is less than or equal to $v$, whereas $q > v$. Thus the extra probability assigned to zero contributes nothing to covering $\mathcal{C}$.

We write $\mathbb{P}(\,\cdot \mid \mathcal{N})$ for conditional probability given the entire survivor set: for each possible value $S$ of $\mathcal{N}$, this means conditioning on the event $\mathcal{N} = S$. Knowing this set determines all the weights $X_{q,a}$ and $Z_{q}$, and hence the distributions for the second-stage choices. For each candidate $n$, the conditional probability that the choice at $q$ covers it is therefore
\begin{equation}
\label{eq:composite-candidate-selection-probability}
\theta_{q,n} := \mathbb{P}(a_{q} \equiv n \bmod q \mid \mathcal{N}) = \frac{X_{q,n}}{2q}\mathbf{1}_{\{Z_{q} \le 2\}}.
\end{equation}
A nonempty group can receive positive selection weight only when all its members survive, in which case choosing its class covers them all.

Discarding the group weights at primes with $Z_{q} > 2$ might reduce the weight available to a candidate. Write
\begin{equation*}
D_{n} := \sum_{q \, \in \, \mathcal{R}}\frac{X_{q,n}}{q}\mathbf{1}_{\{Z_{q} > 2\}}
\end{equation*}
for this discarded weight. Both $W_{n}$ and $D_{n}$ vanish if $n \notin \mathcal{N}$, and \eqref{eq:composite-candidate-selection-probability} gives
\begin{equation}
\label{eq:usable-composite-covering-weight}
\sum_{q \, \in \, \mathcal{R}}\theta_{q,n} = \frac{W_{n} - D_{n}}{2}.
\end{equation}
The sum on the left is the conditional expected number of reserve primes that cover $n$; it can be greater than $1$.

We need to bound how much weight is discarded, not just the probability of discarding. The following estimate controls the total loss over all candidates.

\begin{lemma}[Discarded weight]
\label{lem:intermediate-discarded-weight}
With expectation over the preliminary choices,
\begin{equation}
\label{eq:intermediate-discarded-weight-bound}
\sum_{n \, \in \, \mathcal{C}}\mathbb{E}D_{n} \ll \frac{U\#\mathcal{R}}{(\log x)^{90}}.
\end{equation}
\end{lemma}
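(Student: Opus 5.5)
We bound the total discarded weight by swapping the order of summation, so that we sum over reserve primes $q$ instead of candidates $n$. For fixed $q$, the candidates $n \in \mathcal{C}$ are partitioned by their groups $E_{q,a}$. A group with weight $X_{q,a}$ contributes $X_{q,a}/q$ to $D_{n}$ for each of its at most $U$ members. Hence
\begin{equation*}
\sum_{n \, \in \, \mathcal{C}}D_{n} = \sum_{q \, \in \, \mathcal{R}}\frac{\mathbf{1}_{\{Z_{q} > 2\}}}{q}\sum_{a \, = \, 0}^{q - 1}\#E_{q,a}\,X_{q,a} \le U\sum_{q \, \in \, \mathcal{R}}Z_{q}\mathbf{1}_{\{Z_{q} > 2\}},
\end{equation*}
where we used $X_{q,a} \ge 0$ and the definition $Z_{q} = q^{-1}\sum_{a}X_{q,a}$. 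This reduces the lemma to showing that, uniformly in $q \in \mathcal{R}$,
\begin{equation*}
\mathbb{E}\bigl(Z_{q}\mathbf{1}_{\{Z_{q} > 2\}}\bigr) \ll (\log x)^{-90}.
\end{equation*}
Summing this over the $\#\mathcal{R}$ primes then gives \eqref{eq:intermediate-discarded-weight-bound}.

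For the reduced bound we use the first second-moment estimate in Lemma~\ref{lem:intermediate-second-moments}. On the event $Z_{q} > 2$ we have $Z_{q} - 1 > 1$. Also $Z_{q} < 2(Z_{q} - 1)$ there, and $Z_{q} - 1 \le (Z_{q} - 1)^{2}$. Therefore
\begin{equation*}
Z_{q}\mathbf{1}_{\{Z_{q} > 2\}} \le 2(Z_{q} - 1)^{2}\mathbf{1}_{\{Z_{q} > 2\}} \le 2(Z_{q} - 1)^{2}.
\end{equation*}
Taking expectations gives $\mathbb{E}\bigl(Z_{q}\mathbf{1}_{\{Z_{q}>2\}}\bigr) \le 2\,\mathbb{E}(Z_{q} - 1)^{2} \ll (\log x)^{-90}$, uniformly in $q$, as required.

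The main point needing care is the one that the remark before the lemma emphasises. Chebyshev's inequality alone controls only $\mathbb{P}(Z_{q} > 2)$, but here we need the weighted quantity $\mathbb{E}\bigl(Z_{q}\mathbf{1}_{\{Z_{q}>2\}}\bigr)$. The pointwise inequality $Z_{q} \le 2(Z_{q}-1)^{2}$ on $\{Z_{q}>2\}$ handles this directly, so no higher moments are needed. The only other thing to check is that the regrouping is legitimate. It is, because each candidate lies in exactly one group modulo each $q$, and empty groups contribute nothing to the count of members.
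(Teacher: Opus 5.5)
Your proposal is correct and follows the paper's proof: you regroup the sum over candidates by residue classes modulo each $q \in \mathcal{R}$ to get the bound $U\sum_{q}Z_{q}\mathbf{1}_{\{Z_{q} > 2\}}$, then apply the pointwise inequality $z\mathbf{1}_{\{z > 2\}} \le 2(z - 1)^{2}$ together with the first second-moment estimate. The only difference is cosmetic. You derive the pointwise inequality by chaining $Z_{q} < 2(Z_{q} - 1) \le 2(Z_{q} - 1)^{2}$, while the paper factors $2(z - 1)^{2} - z = (2z - 1)(z - 2)$.
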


\begin{proof}
For $z \ge 0$,
\begin{equation*}
z\mathbf{1}_{\{z > 2\}} \le 2(z - 1)^{2}.
\end{equation*}
For $z \le 2$ this is immediate, and for $z > 2$ it follows from $2(z - 1)^{2} - z = (2z - 1)(z - 2) > 0$. The first estimate in \eqref{eq:intermediate-second-moment-bounds} therefore gives
\begin{equation*}
\mathbb{E}\left(Z_{q}\mathbf{1}_{\{Z_{q} > 2\}}\right) \le 2\mathbb{E}(Z_{q} - 1)^{2} \ll (\log x)^{-90}.
\end{equation*}
For a fixed $q$, the same label weight $X_{q,a}$ occurs in $D_{n}$ once for every member $n$ of $E_{q,a}$. Each group has size less than or equal to $U$ by Lemma~\ref{lem:intermediate-group-geometry}. Interchanging the sums and grouping candidates by their classes modulo $q$ gives
\begin{equation*}
\sum_{n \, \in \, \mathcal{C}}D_{n} = \sum_{q \, \in \, \mathcal{R}}\frac{\mathbf{1}_{\{Z_{q} > 2\}}}{q}\sum_{a \, = \, 0}^{q - 1}\#E_{q,a}\,X_{q,a} 
\le U\sum_{q \, \in \, \mathcal{R}}Z_{q}\mathbf{1}_{\{Z_{q} > 2\}}.
\end{equation*}
Taking expectations and using the preceding estimate proves \eqref{eq:intermediate-discarded-weight-bound}.
\end{proof}

\subsection{The remaining composites}
\label{subsec:intermediate-uncovered-composites}

We can now estimate the number of composite candidates missed by both stages. There are three contributions: survivors with too little initial weight, survivors losing too much weight through discarding, and survivors with enough usable weight that nevertheless escape every second-stage choice. The first two will be bounded by the moment estimates; the third will be bounded using the conditional independence of the choices at the reserve primes.

\begin{proposition}[Composite remainder]
\label{prop:intermediate-composite-covering}
Let
\begin{equation*}
R_{\mathcal{C}} := \#\left(\mathcal{C} \cap \mathcal{N} \setminus \bigcup_{q \, \in \, \mathcal{R}}(a_{q} + q\mathbb{Z})\right).
\end{equation*}
Then, uniformly for $t$ in \eqref{eq:intermediate-tilt-range},
\begin{equation}
\label{eq:intermediate-composite-remainder}
\mathbb{E}R_{\mathcal{C}} \ll \frac{HQ}{(\log x)^{90}} + HQ\exp\left(-\frac{M}{8Q}\right).
\end{equation}
The expectation includes both the preliminary choices and the subsequent choices at primes in $\mathcal{R}$.
\end{proposition}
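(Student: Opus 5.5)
We condition on the survivor set $\mathcal{N}$ and write $\mathbb{E}R_{\mathcal{C}}$ as a sum over candidates $n \in \mathcal{C}$ of $\mathbb{P}(n \in \mathcal{N},\ n \text{ uncovered by every } a_{q})$. Given $\mathcal{N}$, the second-stage choices at different $q \in \mathcal{R}$ are independent, and $n$ is covered at $q$ with probability $\theta_{q,n}$ by \eqref{eq:composite-candidate-selection-probability}. Hence, on $n \in \mathcal{N}$,
\begin{equation*}
\mathbb{P}(n \text{ uncovered} \mid \mathcal{N}) = \prod_{q \, \in \, \mathcal{R}}(1 - \theta_{q,n}) \le \exp\Bigl(-\sum_{q}\theta_{q,n}\Bigr) = \exp\Bigl(-\frac{W_{n} - D_{n}}{2}\Bigr),
\end{equation*}
using \eqref{eq:usable-composite-covering-weight}. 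We split survivors $n$ into three classes. \emph{Class I:} $W_{n} < M/(2q_{n})$, i.e.\ $q_{n}W_{n}/M - 1 \le -1/2$. \emph{Class II:} not class I, but $D_{n} \ge M/(4q_{n})$. \emph{Class III:} the rest, where $W_{n} - D_{n} \ge M/(4q_{n}) \ge M/(4Q)$ since $q_{n} \le Q$. Each $n$ in class III is then uncovered with conditional probability at most $\exp(-M/(8Q))$.

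For class I, conditional Chebyshev with the second estimate in \eqref{eq:intermediate-second-moment-bounds} gives
\begin{equation*}
\mathbb{P}(\text{class I},\ n \in \mathcal{N}) \le 4q_{n}(\log x)^{-90} \le 4Q(\log x)^{-90}.
\end{equation*}
Summing over at most $H$ candidates gives the first term $HQ(\log x)^{-90}$. For class II, Markov's inequality gives
\begin{equation*}
\mathbb{P}(D_{n} \ge M/(4q_{n})) \le 4q_{n}\mathbb{E}D_{n}/M \le 4Q\,\mathbb{E}D_{n}/M.
\end{equation*}
Summing over $n$ and applying Lemma~\ref{lem:intermediate-discarded-weight} bounds the class II contribution by $Q\,U\#\mathcal{R}/(M(\log x)^{90})$. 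By \eqref{eq:intermediate-prime-budgets} and \eqref{eq:intermediate-reserve-reciprocals}, $\#\mathcal{R}/M \asymp x$, and since $U \ll H/x$ (as $H/x \to \infty$), this is $\ll HQ(\log x)^{-90}$. For class III, summing $q_{n}\exp(-M/(8Q)) \le Q\exp(-M/(8Q))$ over at most $H$ candidates gives the second term.

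The main point needing care is class II. Discarding may remove weight unevenly, so we must work with the total discarded weight in expectation rather than with probabilities of discard. This is why Lemma~\ref{lem:intermediate-discarded-weight} is phrased as a sum over $n$, and the Markov step must carry the factor $q_{n}$ so that the result scales with $Q$. We also check that $U\#\mathcal{R}/M \ll H$ using $U \asymp H/x$. All bounds are uniform in $t$ because the inputs are.
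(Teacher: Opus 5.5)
Your proposal is correct and follows the paper's proof essentially step for step: the same three-way split, with insufficient weight handled by conditional Chebyshev, excessive discarded weight by Markov together with Lemma~\ref{lem:intermediate-discarded-weight}, and the remaining survivors by conditional independence giving the factor $\exp(-M/(8Q))$. The only cosmetic difference is in showing $QU\#\mathcal{R}/M \ll QH$: the paper uses the exact inequalities $M \ge \#\mathcal{R}/x$ and $Ux < 5H$, whereas you use the asymptotics $\#\mathcal{R}/M \asymp x$ and $U \ll H/x$.
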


\begin{proof}
We first count survivors for which $W_{n} < M/(2q_{n})$. Conditional on $n \in \mathcal{N}$, this implies that $q_{n}W_{n}/M$ differs from its mean $1$ by more than $1/2$. Chebyshev's inequality and \eqref{eq:intermediate-second-moment-bounds} give
\begin{equation*}
\mathbb{P}\left(\left.W_{n} < \frac{M}{2q_{n}}\,\right|\,n \in \mathcal{N}\right) \ll (\log x)^{-90}.
\end{equation*}
Multiplying by the survival probability $q_{n}$ and summing over candidates therefore gives
\begin{equation*}
\sum_{n \, \in \, \mathcal{C}}\mathbb{P}\left(n \in \mathcal{N},\ W_{n} < \frac{M}{2q_{n}}\right) \ll \frac{1}{(\log x)^{90}}\sum_{n \, \in \, \mathcal{C}}q_{n} \le \frac{HQ}{(\log x)^{90}}.
\end{equation*}
We used $\#\mathcal{C} \le H$ and $q_{n} \le Q$ (see \eqref{eq:intermediate-probability-comparison}). These exceptional survivors can all be counted as uncovered, regardless of the second-stage choices.

Next consider survivors for which $D_{n} > M/(4q_{n})$. For any nonnegative random variable $Y$ and $a > 0$, the inequality $Y \ge a\mathbf{1}_{\{Y > a\}}$ gives Markov's inequality $\mathbb{P}(Y > a) \le \mathbb{E}Y/a$. Apply it to $D_{n}$, noting that $D_{n} = 0$ when $n$ does not survive. By \eqref{eq:intermediate-discarded-weight-bound},
\begin{equation*}
\sum_{n \, \in \, \mathcal{C}}\mathbb{P}\left(n \in \mathcal{N},\ D_{n} > \frac{M}{4q_{n}}\right) \le \frac{4}{M}\sum_{n \, \in \, \mathcal{C}}q_{n}\mathbb{E}D_{n} \\
\le \frac{4Q}{M}\sum_{n \, \in \, \mathcal{C}}\mathbb{E}D_{n} \ll \frac{QU\#\mathcal{R}}{M(\log x)^{90}}.
\end{equation*}
Since $q \le x$ for every $q \in \mathcal{R}$, we have $M \ge \#\mathcal{R}/x$. Also the definition $U = \lceil 2H/x\rceil + 2$ and $H > x$ give $Ux \le 2H + 3x < 5H$. Hence
\begin{equation*}
\frac{QU\#\mathcal{R}}{M} \le QUx \ll QH,
\end{equation*}
so the expected number of these exceptional survivors is also $O(HQ/(\log x)^{90})$.

It remains to consider a preliminary outcome and a surviving $n$ satisfying both
\begin{equation*}
W_{n} \ge \frac{M}{2q_{n}}, \qquad D_{n} \le \frac{M}{4q_{n}}.
\end{equation*}
For this outcome, \eqref{eq:usable-composite-covering-weight} gives
\begin{equation*}
\sum_{q \, \in \, \mathcal{R}}\theta_{q,n} = \frac{W_{n} - D_{n}}{2} \ge \frac{M}{8q_{n}} \ge \frac{M}{8Q}.
\end{equation*}
Conditional on $\mathcal{N}$, the choices at different reserve primes are independent. Thus the probability that all of them miss this $n$ is
\begin{equation*}
\prod_{q \, \in \, \mathcal{R}}(1 - \theta_{q,n}) \le \exp\left(-\sum_{q \, \in \, \mathcal{R}}\theta_{q,n}\right) \le \exp\left(-\frac{M}{8Q}\right),
\end{equation*}
where we used $1 - u \le e^{-u}$ for $u \ge 0$. This is a bound for each preliminary outcome satisfying the two inequalities above. Averaging over those outcomes, whose total probability is less than or equal to $q_{n}$, bounds the probability that $n$ survives, satisfies both inequalities and remains uncovered by $q_{n}\exp(-M/(8Q))$. Summing over candidates gives
\begin{equation*}
\sum_{n \, \in \, \mathcal{C}}q_{n}\exp\left(-\frac{M}{8Q}\right) \le HQ\exp\left(-\frac{M}{8Q}\right).
\end{equation*}
Adding this contribution to the two exceptional contributions proves \eqref{eq:intermediate-composite-remainder}.
\end{proof}

The factor $HQ$ bounds the expected number of preliminary survivors in $\mathcal{C}$. The first term in \eqref{eq:intermediate-composite-remainder} accounts for inadequate or discarded weight, and the second for survivors missed despite having sufficient usable weight. Independence was needed only for the second-stage choices at different reserve primes, conditional on the preliminary sieve; no independence between the coverage events for different integers was assumed. In the final section we choose $t$ so that both terms are negligible compared with the number of mopping primes, and count the survivors outside $\mathcal{C}$ as well.

\section{Completing the covering}
\label{sec:intermediate-completion}

We now choose the tilt so that the expected number of composite candidates left uncovered is negligible compared with the number of mopping primes. We then count the survivors outside the candidate set. The prime survivors give the main contribution, and choosing the constant $c$ sufficiently small makes this contribution less than the number of primes in $\mathcal{R}'$.

\subsection{Choosing the tilt}
\label{subsec:intermediate-final-parameters}

Recall that $R_{\mathcal{C}}$ counts the members of $\mathcal{C}$ left uncovered after both stages of random choices. Proposition~\ref{prop:intermediate-composite-covering} gives
\begin{equation*}
\mathbb{E}R_{\mathcal{C}} \ll \frac{HQ}{(\log x)^{90}} + HQ\exp\left(-\frac{M}{8Q}\right).
\end{equation*}
The first term is already negligible throughout the working range of $t$. Indeed, $Q = AT^{-\tau} \le 1$ and $H \ll x\log x$, so
\begin{equation*}
\frac{HQ}{(\log x)^{90}} \ll \frac{x}{(\log x)^{89}} = o\left(\frac{x}{\log x}\right).
\end{equation*}
We choose $t$ to make the second term negligible as well.

Equations~\eqref{eq:intermediate-reserve-reciprocals} and \eqref{eq:intermediate-common-probability-size} give
\begin{equation}
\label{eq:final-covering-intensity-size}
\frac{M}{Q} \asymp \frac{e^{t}}{t\log_{2} x}.
\end{equation}
Also, the definition of $H$ in \eqref{eq:target-interval-length} and the estimate for $Q$ give $HQ \asymp xe^{-t}$, with $c$ fixed and $t$ in \eqref{eq:intermediate-tilt-range}. Consequently,
\begin{equation}
\label{eq:final-composite-prefactor-size}
\log\left(\frac{HQ\log x}{x}\right) = \log_{2} x - t + O(1) = \log_{2} x + O(\log_{3} x).
\end{equation}
Thus it suffices to make $M/Q$ grow faster than $\log_{2} x$: the negative exponent in the remainder bound will then dominate the logarithm of the prefactor. By \eqref{eq:final-covering-intensity-size}, a sufficient condition is
\begin{equation*}
\frac{e^{t}}{t(\log_{2} x)^{2}} \to \infty.
\end{equation*}
We therefore choose
\begin{equation}
\label{eq:intermediate-final-tilt}
t := 2\log_{3} x + 2\log_{4} x, \qquad \tau := \frac{t}{\log x}.
\end{equation}
This lies in the range \eqref{eq:intermediate-tilt-range} for sufficiently large $x$, since $\log_{4} x = o(\log_{3} x)$. Moreover,
\begin{equation*}
e^{t} = (\log_{2} x)^{2}(\log_{3} x)^{2}, \qquad \frac{e^{t}}{t(\log_{2} x)^{2}} = \frac{(\log_{3} x)^{2}}{t} \sim \frac{\log_{3} x}{2}.
\end{equation*}
In particular, \eqref{eq:final-covering-intensity-size} now gives $M/Q \asymp (\log_{2} x)\log_{3} x$. Using \eqref{eq:final-composite-prefactor-size}, we obtain
\begin{equation*}
\log\left(\frac{HQ\log x}{x}\exp\left(-\frac{M}{8Q}\right)\right) = \log_{2} x + O(\log_{3} x) - \frac{M}{8Q} \to -\infty.
\end{equation*}
Both terms in \eqref{eq:intermediate-composite-remainder} are therefore negligible on the required scale:
\begin{equation}
\label{eq:final-composite-remainder-small}
\mathbb{E}R_{\mathcal{C}} = o\left(\frac{x}{\log x}\right).
\end{equation}

\subsection{Counting the remaining integers}
\label{subsec:intermediate-remaining-integers}

Every preliminary survivor is $w$-rough. Thus a survivor greater than $T$ is either a prime, a member of $\mathcal{C}$, or a nonsquarefree $w$-rough composite. We have counted the uncovered members of $\mathcal{C}$. The initial interval contributes less than or equal to
\begin{equation*}
T = \left\lfloor\frac{x}{(\log x)^{2}}\right\rfloor = o\left(\frac{x}{\log x}\right)
\end{equation*}
uncovered integers, regardless of the residue choices. It remains to count the primes and the nonsquarefree exceptions.

\begin{lemma}[Survivors outside the candidate set]
\label{lem:intermediate-final-counts}
With the tilt \eqref{eq:intermediate-final-tilt},
\begin{equation}
\label{eq:intermediate-prime-expectation}
\begin{split}
\mathbb{E}\,\#\{\ell \in \mathcal{N} : \ell > T,\ \ell \text{ is prime}\} &= (1 + o(1))\frac{AH}{\log H} \\
&= (200e^{\gamma}c + o(1))\frac{x}{\log x}.
\end{split}
\end{equation}
The number of nonsquarefree $w$-rough integers less than or equal to $H$ is $o(x/\log x)$.
\end{lemma}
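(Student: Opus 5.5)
By Lemma~\ref{lem:intermediate-one-point-law}, a prime $\ell \le H$ survives with probability $0$ if $\ell \le w$, with probability $A\ell^{-\tau}$ if $w < \ell \le v$, and with probability $A$ if $v < \ell \le H$. Since $T > w$, linearity of expectation gives
\begin{equation*}
\mathbb{E}\,\#\{\ell \in \mathcal{N} : \ell > T,\ \ell \text{ prime}\} = A\sum_{T < \ell \le v}\ell^{-\tau} + A\bigl(\pi(H) - \pi(v)\bigr).
\end{equation*}
For the first sum I would use the trivial bound $\ell^{-\tau} \le 1$, giving at most $A\pi(v) \le A\pi(x)$. For the second, the prime number theorem together with \eqref{eq:intermediate-logarithmic-lengths} gives $\pi(H) = (1+o(1))H/\log H$ and $\pi(v) \ll x/\log x$. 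Since $H/x \asymp \log x/((\log_2 x)\log_3 x) \to \infty$, we have $\pi(x) = o(H/\log H)$. Both the primes up to $v$ and the subtracted $\pi(v)$ are therefore absorbed into the error, and the expectation is $(1+o(1))AH/\log H$.

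For the second line of \eqref{eq:intermediate-prime-expectation}, I would substitute Lemma~\ref{lem:intermediate-normalizing-product}, which gives $A = (100e^\gamma + o(1))\,t\log_2 x/\log x$, together with $t \sim 2\log_3 x$ from \eqref{eq:intermediate-final-tilt}. Using $\log H = (1+o(1))\log x$ and $H = (1+o(1))\,cx\log x/((\log_2 x)\log_3 x)$, the product is
\begin{equation*}
(200e^\gamma + o(1))\,\frac{(\log_3 x)\log_2 x}{\log x}\cdot\frac{cx}{(\log_2 x)\log_3 x} = (200e^\gamma c + o(1))\frac{x}{\log x},
\end{equation*}
as required.

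For the nonsquarefree count, a nonsquarefree $w$-rough integer $n \le H$ is divisible by $p^2$ for some prime $p > w$. The number of such $n$ is therefore at most
\begin{equation*}
\sum_{p > w} \frac{H}{p^2} \ll \frac{H}{w} \ll \frac{x\log x}{(\log x)^{100}} = o\!\left(\frac{x}{\log x}\right).
\end{equation*}
No step is a real obstacle. The only point needing care is checking that the primes in $(T, v]$, weighted by $\ell^{-\tau} \le 1$, contribute only $O(Ax/\log x) = o(AH/\log H)$; this holds because $H/x \to \infty$.
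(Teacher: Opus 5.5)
Your proposal is correct and follows the paper's proof essentially step for step. You write the expectation exactly via Lemma~\ref{lem:intermediate-one-point-law}, absorb the primes up to $v$ as $O(A\pi(v)) = o(AH/\log H)$ using $H/x \to \infty$, substitute Lemma~\ref{lem:intermediate-normalizing-product} with $t \sim 2\log_{3} x$, and bound the nonsquarefree $w$-rough integers by $H\sum_{p > w}p^{-2} \ll H/w$.
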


\begin{proof}
By Lemma~\ref{lem:intermediate-one-point-law}, a prime $\ell$ in $(T, v]$ survives with probability $A\ell^{-\tau}$, while a prime in $(v, H]$ survives with probability $A$. Linearity of expectation therefore gives the exact expression
\begin{equation*}
A\bigl(\pi(H) - \pi(v)\bigr) + A\sum_{T \, < \, \ell \, \le \, v}\ell^{-\tau},
\end{equation*}
where the sum runs over primes. Since $\ell^{-\tau} \le 1$, this differs from $A\pi(H)$ by $O(A\pi(v))$. The prime number theorem and the relations $H/x \to \infty$ and $\log H \sim \log x$ give
\begin{equation*}
\pi(v) = o\left(\frac{H}{\log H}\right), \qquad \pi(H) \sim \frac{H}{\log H}.
\end{equation*}
This proves the first equality in \eqref{eq:intermediate-prime-expectation}. Substituting \eqref{eq:target-interval-length} and \eqref{eq:intermediate-product-estimate}, and using $t \sim 2\log_{3} x$, gives the second. The subsequent choices at primes in $\mathcal{R}$ can only remove survivors, so the same expectation bounds the expected number of primes still requiring mopping up.

Every nonsquarefree $w$-rough integer is divisible by $p^{2}$ for some prime $p > w$. Their number is therefore less than or equal to
\begin{equation*}
\sum_{w \, < \, p \, \le \, \sqrt{H}}\left\lfloor\frac{H}{p^{2}}\right\rfloor \le H\sum_{p \, > \, w}\frac{1}{p^{2}} \ll \frac{H}{w} = o\left(\frac{x}{\log x}\right).
\end{equation*}
This is a bound for all such integers, so it also bounds those remaining after either stage of the sieve.
\end{proof}

\subsection{Mopping up and the main theorem}
\label{subsec:intermediate-deterministic-covering}

We now combine these counts and apply the expectation criterion from Subsection~\ref{subsec:survivors-to-complete-covering} to complete the covering with the primes in $\mathcal{R}'$.

\begin{proof}[Proof of Theorem~\ref{thm:tilted-covering-bound}]
Fix $c > 0$ sufficiently small that $200e^{\gamma}c < 1/4$, and use the tilt \eqref{eq:intermediate-final-tilt}. Recall that
\begin{equation*}
\mathcal{M} = \mathcal{N} \setminus \bigcup_{q \, \in \, \mathcal{R}}(a_{q} + q\mathbb{Z})
\end{equation*}
is the set of integers still uncovered after both stages of random choices. The initial interval and the nonsquarefree exceptions together contribute $o(x/\log x)$ integers. By \eqref{eq:final-composite-remainder-small}, the expected contribution from the remaining composite candidates is also $o(x/\log x)$. Adding the prime expectation from \eqref{eq:intermediate-prime-expectation} gives
\begin{equation*}
\mathbb{E}\,\#\mathcal{M} \le (200e^{\gamma}c + o(1))\frac{x}{\log x} < \#\mathcal{R}'
\end{equation*}
for all sufficiently large $x$, since \eqref{eq:intermediate-prime-budgets} gives $\#\mathcal{R}' \sim x/(4\log x)$. The expectation includes both stages of random choices.

We can now apply the expectation criterion from Subsection~\ref{subsec:survivors-to-complete-covering}. Some outcome therefore has $\#\mathcal{M} < \#\mathcal{R}'$. Fix such an outcome, assign a distinct prime $p \in \mathcal{R}'$ to each $n \in \mathcal{M}$, and choose $a_{p} \equiv n \bmod p$. Give arbitrary classes to any unused primes. These choices complete the covering:
\begin{equation*}
\{1, \ldots, H\} \subseteq \bigcup_{p \, \le \, x}(a_{p} + p\mathbb{Z}).
\end{equation*}
Thus $Y(x) \ge H$, and the definition \eqref{eq:target-interval-length} gives the asserted bound.
\end{proof}

\subsection{From coverings to prime gaps}
\label{subsec:tilted-covering-prime-gaps}

It remains to convert the covering into a gap between consecutive primes. We also check that the larger prime is less than or equal to $X$, as required by the definition of $G(X)$.

\begin{proof}[Proof of Corollary~\ref{cor:tilted-prime-gap-bound}]
For sufficiently large $X$, put $x := (\log X)/2$ and write
\begin{equation*}
P(x) := \prod_{p \, \le \, x}p.
\end{equation*}
The prime number theorem gives
\begin{equation}
\label{eq:final-primorial-size}
\log P(x) = \sum_{p \, \le \, x}\log p = (1 + o(1))x, \qquad P(x) = X^{1/2 + o(1)}.
\end{equation}
Take the residue classes just constructed. By the Chinese remainder theorem, there is an integer $m$ satisfying
\begin{equation*}
P(x) \le m < 2P(x), \qquad m \equiv -a_{p} \bmod p \quad (p \le x).
\end{equation*}
For each $1 \le n \le H$, the covering supplies a prime $p \le x$ with $n \equiv a_{p} \bmod p$. Hence $p \mid m + n$. Since $m + n > P(x) > x \ge p$, this divisor is proper. All the integers $m + 1, \ldots, m + H$ are therefore composite.

Let $p_{-}$ be the largest prime less than or equal to $m$, and $p_{+}$ the smallest prime greater than $m + H$. These primes are consecutive, and $p_{+} - p_{-} > H$. The prime number theorem guarantees a prime in $(u, 2u]$ for all sufficiently large $u$. Since $H = o(P(x))$, it follows from \eqref{eq:final-primorial-size} that
\begin{equation*}
p_{+} \le 2(m + H) < 6P(x) = X^{1/2 + o(1)} < X.
\end{equation*}
This gap is therefore counted by $G(X)$. Substituting $x = (\log X)/2$ into the definition of $H$ now gives
\begin{equation*}
G(X) \ge H \gg \frac{x\log x}{(\log_{2} x)\log_{3} x} \asymp \frac{(\log X)\log_{2} X}{(\log_{3} X)\log_{4} X},
\end{equation*}
as required.
\end{proof}

\clearpage

\appendix

\section{The classical Erd\H{o}s--Rankin construction}
\label{app:traditional-erdos-rankin}

We give a detailed proof of the classical Erd\H{o}s--Rankin bound, using an elementary smooth-number estimate and a greedy choice of residue classes. We first state the covering bound and deduce its consequence for prime gaps, then establish the estimate needed for the construction. We also explain how the parameter choices arise.

Recall that $Y(x)$ is the largest nonnegative integer $y$ for which one can choose a residue class $a_p \bmod p$ for each prime $p \le x$ so that these classes \emph{cover} $\{1,\ldots,y\}$: every integer in this interval belongs to at least one of the chosen classes. We call such a family of residue classes a \emph{covering} of the interval. Writing $p_n$ for the $n$th prime, we also put
\begin{equation*}
G(X) := \max_{p_{n + 1} \, \le \, X}(p_{n + 1} - p_n),
\end{equation*}
the largest gap between consecutive primes both at most $X$.

\subsection{The covering bound and its consequence for prime gaps}
\label{subsec:classical-covering-and-prime-gaps}

The construction gives the following lower bound for the length of an interval that can be covered by one residue class modulo each prime up to $x$.

\begin{theorem}
\label{thm:classical-erdos-rankin-bound}
For all sufficiently large $x$,
\begin{equation*}
Y(x) \gg \frac{x(\log x)(\log_{3} x)}{(\log_{2} x)^2}.
\end{equation*}
\end{theorem}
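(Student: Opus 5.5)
We follow the classical Erdős--Rankin construction in the framework of Section~\ref{sec:covering-framework-and-strategy}, with no tilt. Take
\begin{equation*}
H := \left\lfloor\frac{cx(\log x)\log_{3} x}{(\log_{2} x)^{2}}\right\rfloor
\end{equation*}
with a small fixed $c > 0$. Set the parameters
\begin{equation*}
z := x^{\log_{3} x/(4\log_{2} x)}, \qquad w := \log x, \qquad v := \frac{x}{2}.
\end{equation*}
Then $H < wv$ for large $x$. We clamp every prime $p \le w$ and every prime in $(z, v]$, so that $a_{p} = 0$ for these. By the discussion in Subsection~\ref{subsec:covering-problem-and-terminology}, every survivor greater than $1$ is then either a prime greater than $v$, or a $z$-smooth $w$-rough integer all of whose prime factors lie in $(w, z]$. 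The primes $p \in (w, z]$ receive uniformly random (equivalently, greedy) classes. The primes in $(x/2, x]$ are held in reserve for mopping up, as in Subsection~\ref{subsec:survivors-to-complete-covering}.

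The first step counts the smooth survivors. We need the number of $z$-smooth integers up to $H$ to be $o(x/\log x)$. We will prove an elementary Rankin-type bound: for $0 < \sigma < 1$,
\begin{equation*}
\Psi(H, z) \le H^{\sigma}\prod_{p \le z}(1 - p^{-\sigma})^{-1}.
\end{equation*}
We apply it with $\sigma = 1 - \eta$, where $\eta \asymp (\log_{2} x)/\log z$. This gives $\Psi(H, z) \le H\exp(-(1 + o(1))u\log u)$ with $u = \log H/\log z$, which is essentially $H(\log x)^{-4 + o(1)}$ for our choice of $z$. Since $H \le x(\log x)^{1 + o(1)}$, this is $o(x/\log x)$. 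This estimate is the main obstacle. The exponent of $z$ must be tuned so that $u\log u$ exceeds $(2 + \epsilon)\log_{2} x$, and the Euler product must be controlled by Mertens-type bounds, $\sum_{p \le z} p^{-\sigma} \le \log_{2} z + O(e^{\eta\log z}/(\eta\log z))$. The constants are tight here, so the exponent $\log_{3} x/(4\log_{2} x)$ may need adjustment.

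The second step counts the prime survivors. Primes in $(v, H]$ survive the uniform choices with probability $\prod_{w < p \le z}(1 - 1/p) = \log w/\log z\,(1 + o(1))$ by Mertens. Primes in $(w, z]$ and the integers up to $w$ contribute negligibly. Hence the expected number of prime survivors is
\begin{equation*}
\sim \frac{H}{\log x}\cdot\frac{\log_{2} x}{\log z} = \frac{4H(\log_{2} x)^{2}}{(\log x)^{2}\log_{3} x} \asymp \frac{4c\,x}{\log x}.
\end{equation*}
For $c$ small this is below $\tfrac12\pi(x) - \pi(x/2) \sim x/(2\log x)$.

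Finally, adding the two counts gives $\mathbb{E}\,\#\mathcal{N} < \pi(x) - \pi(v)$. Some outcome therefore leaves at most this many survivors, and mopping up completes the covering. This yields $Y(x) \ge H$, which is the bound in Theorem~\ref{thm:classical-erdos-rankin-bound}.
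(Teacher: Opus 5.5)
Your construction is the paper's: clamp $p\le w$ and $z<p\le v$, use uniform (or greedy) classes on $(w,z]$, and mop up with $(x/2,x]$. Your prime count $(4c+o(1))x/\log x$ is correct. It should be compared with $\pi(x)-\pi(x/2)\sim x/(2\log x)$, not with $\tfrac12\pi(x)-\pi(x/2)$, so any $c<1/8$ works.

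The gap is in the smooth-number step, which is the only real estimate: the Rankin exponent you propose fails. Suppose $\sigma=1-\eta$ with $\eta\log z=\kappa\log_{2}x$ for a fixed $\kappa>0$. Then $H^{\sigma}=H\exp(-\kappa u\log_{2}x)$ saves only a factor $\exp\bigl(O((\log_{2}x)^{2}/\log_{3}x)\bigr)$. On the other hand,
$\log\prod_{p\le z}(1-p^{-\sigma})^{-1}\ge z^{\eta/2}\sum_{\sqrt{z}<p\le z}p^{-1}=(\log 2+o(1))(\log x)^{\kappa/2}$,
so the resulting bound exceeds $H$. Your own error term $e^{\eta\log z}/(\eta\log z)$ is likewise a power of $\log x$, which shows the same failure. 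The exponent must instead satisfy $z^{\eta}=u$, that is, $\eta=\log u/\log z\asymp\log_{3}x/\log z$, as in Lemma~\ref{lem:rankin-smooth-numbers}. Then $H^{\sigma}=He^{-u\log u}$ and $\sum_{p\le z}p^{-\sigma}\le\log_{2}z+u+O(1)$, giving $\Psi(H,z)\ll H(\log z)e^{u-u\log u}$.

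Even with this fix, Rankin's trick cannot give $H\exp(-(1+o(1))u\log u)$. For every $\sigma<1$ the Euler product is at least $\prod_{p\le z}(1-1/p)^{-1}\asymp\log z$. With your $z$ this factor is $\log z=\exp\bigl((\tfrac14+o(1))u\log u\bigr)$, which is not negligible. Since $u\log u=(4+o(1))\log_{2}x$ and $e^{u}=(\log x)^{o(1)}$, the elementary bound is
$\Psi(H,z)\le H(\log x)^{-3+o(1)}=x(\log x)^{-2+o(1)}=o(x/\log x)$.

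So your exponent $1/4$ works without adjustment, but for a different reason than you give. Write $\log z=\theta(\log x)(\log_{3}x)/\log_{2}x$. The elementary threshold is then $u\log u>(3+\epsilon)\log_{2}x$, i.e.\ $\theta<1/3$. The threshold $(2+\epsilon)\log_{2}x$ you mention requires de Bruijn's asymptotic $\Psi(H,z)\sim H\rho(u)$.

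This matches the paper's second proof, which likewise leaves the smooth survivors unsieved and takes $\theta<1/3$. The first and third proofs also multiply $\Psi(H,z)$ by the Mertens factor $\log w/\log z$; this cancels the $\log z$ and allows any $\theta<1/2$. Your $w=\log x$ in place of $(\log x)(\log_{3}x)/(\log_{2}x)^{2}$ is harmless: $\log w\sim\log_{2}x$ in both cases, and $H<wv$ holds with room to spare.
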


The Chinese remainder theorem converts such a covering into an interval of composite integers. Before constructing the covering, we make this passage to prime gaps explicit.

\begin{corollary}
\label{cor:classical-erdos-rankin-prime-gaps}
For all sufficiently large $X$,
\begin{equation*}
G(X) \gg \frac{(\log X)(\log_{2} X)(\log_{4} X)}{(\log_{3} X)^2}.
\end{equation*}
Moreover, for infinitely many $n$,
\begin{equation*}
p_{n + 1} - p_n \gg \frac{(\log p_n)(\log_{2} p_n)(\log_{4} p_n)}{(\log_{3} p_n)^2}.
\end{equation*}
\end{corollary}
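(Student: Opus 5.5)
The corollary follows from Theorem~\ref{thm:classical-erdos-rankin-bound} by the same Chinese remainder argument used for Corollary~\ref{cor:tilted-prime-gap-bound}. For large $X$, put $x := (\log X)/2$ and $P(x) := \prod_{p \le x} p$. The prime number theorem gives $\log P(x) = (1+o(1))x$, and hence $P(x) = X^{1/2+o(1)}$. Let $H := Y(x)$, and fix a covering of $\{1,\ldots,H\}$ by classes $a_p \bmod p$ for $p \le x$.

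By the Chinese remainder theorem there is an integer $m$ with
\begin{equation*}
P(x) \le m < 2P(x), \qquad m \equiv -a_p \bmod p \quad (p \le x).
\end{equation*}
For each $1 \le n \le H$, the covering gives a prime $p \le x$ with $p \mid m+n$. Since $m+n > x \ge p$, this divisor is proper, so $m+1,\ldots,m+H$ are all composite. Let $p_- = p_k$ be the largest prime $\le m$, and $p_+ = p_{k+1}$ the smallest prime $> m+H$. Then $p_{k+1}-p_k > H$.

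To show the gap is counted by $G(X)$, I will use the trivial bound $H \ll x^2$ (or the theorem's own order $x\log x$). This gives $H = o(P(x))$. Bertrand's postulate, via the prime number theorem, then gives
\begin{equation*}
p_+ \le 2(m+H) < 6P(x) = X^{1/2+o(1)} < X.
\end{equation*}
Hence $G(X) \ge H \gg x(\log x)(\log_3 x)/(\log_2 x)^2$. Substituting $x = (\log X)/2$, we have $\log x = (1+o(1))\log_2 X$, $\log_2 x = (1+o(1))\log_3 X$ and $\log_3 x = (1+o(1))\log_4 X$. This gives the first assertion, with only the implied constant changing.

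For the second assertion I will work directly with $x \to \infty$ rather than with $X$. The gap above starts at $p_k \le m < 2P(x)$, so $\log p_k \le (1+o(1))x$. The function
\begin{equation*}
f(u) := \frac{(\log u)(\log_2 u)(\log_4 u)}{(\log_3 u)^2}
\end{equation*}
is eventually increasing. Therefore $f(p_k) \le f(2P(x)) \ll x(\log x)(\log_3 x)/(\log_2 x)^2 \ll H < p_{k+1}-p_k$, by the same iterated-logarithm substitution with $\log(2P(x)) \sim x$.

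The only point needing care is that $f$ is increasing only for large arguments, so $p_k$ must tend to infinity. This holds because $p_{k+1}-p_k > H \to \infty$, while consecutive primes below any fixed bound have bounded gaps. Hence $p_k \to \infty$ as $x \to \infty$. This yields infinitely many distinct $k$, and for large $p_k$ the monotonicity of $f$ applies. I expect no real obstacle: the work lies entirely in Theorem~\ref{thm:classical-erdos-rankin-bound}, and the corollary is bookkeeping with iterated logarithms.
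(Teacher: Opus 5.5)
Your argument is essentially the paper's. The paper deduces the corollary from Proposition~\ref{prop:covering-to-prime-gaps}, whose proof is the same Chinese remainder and Bertrand construction. It takes $x=(1-\epsilon)\log X$ and uses regular variation of the lower bound, where you fix $x=(\log X)/2$ and substitute iterated logarithms by hand. The only effect is that the paper keeps the leading constant, while you lose a harmless factor $2$.

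One justification needs correcting. With $H:=Y(x)$, the bound $Y(x)\ll x^{2}$ is not trivial: it amounts to Iwaniec's upper bound for Jacobsthal's function. Also, Theorem~\ref{thm:classical-erdos-rankin-bound} bounds $Y(x)$ only from below. So neither of your suggested reasons gives $H=o(P(x))$.

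You do not need that bound. Choose $b\equiv b_{p}\bmod p$ with $b_{p}\not\equiv a_{p}$ for every $p\le x$; this gives an uncovered integer $1\le b\le P(x)$. Hence $Y(x)<P(x)$, and so $p_{+}\le 2(m+H)<2\bigl(2P(x)+P(x)\bigr)=6P(x)$, which is exactly how the paper argues. Alternatively, take $H:=\lfloor cx(\log x)(\log_{3}x)/(\log_{2}x)^{2}\rfloor\le Y(x)$, which is $\ll x\log x$. This is what the proof of Corollary~\ref{cor:tilted-prime-gap-bound} does implicitly.

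The rest is fine, including the argument for infinitely many individual gaps, which needs only $p_k<2P(x)$, $p_k\to\infty$ and the eventual monotonicity of $f$.
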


We deduce the corollary from a general transfer principle, which also preserves the leading constant in more precise covering bounds.

\begin{proposition}[From coverings to prime gaps]
\label{prop:covering-to-prime-gaps}
Let $F$ be an eventually positive, nondecreasing function. Suppose that, for some $\alpha \ge 0$ and every fixed $a > 0$,
\begin{equation*}
\frac{F(at)}{F(t)} \to a^{\alpha} \qquad (t \to \infty).
\end{equation*}
If $Y(x) \ge F(x)$ for all sufficiently large $x$, then
\begin{equation*}
G(X) \ge (1 - o(1))F(\log X) \qquad (X \to \infty).
\end{equation*}
Moreover, there is a sequence of indices $n_j \to \infty$ such that
\begin{equation*}
p_{n_j + 1} - p_{n_j} \ge (1 - o(1))F(\log p_{n_j}) \qquad (j \to \infty).
\end{equation*}
For this latter conclusion, it suffices that $Y(x) \ge F(x)$ for arbitrarily large $x$.
\end{proposition}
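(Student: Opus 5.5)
The plan is to imitate the proof of Corollary~\ref{cor:tilted-prime-gap-bound}, but to keep track of the constant and to choose the parameter $x$ as large as the size constraint allows. The Chinese remainder theorem applies as before. Given $X$, let $\varepsilon > 0$ be small and put $x := (1 - \varepsilon)\log X$, so that $P(x) = \prod_{p \le x}p = e^{(1 + o(1))x} \le X^{1 - \varepsilon/2}$ by the prime number theorem. Take a covering of $\{1, \ldots, y\}$ with $y = Y(x) \ge F(x)$ and choose $m \in [P(x), 2P(x))$ with $m \equiv -a_{p} \bmod p$ for every $p \le x$. Then $m + 1, \ldots, m + y$ are all composite, since each is divisible by some $p \le x$ and exceeds it.

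The next step is to check that the resulting gap lies below $X$. We have $y \le P(x)$ trivially, because one class modulo each prime covers at most $P(x)(1 - \prod_{p \le x}(1 - 1/p)) < P(x)$ consecutive integers. Bertrand's postulate then gives a prime $p_{+} \le 2(m + y) \le 6P(x) < X$. The consecutive primes around the block therefore differ by more than $y$, and so $G(X) > F((1 - \varepsilon)\log X)$. Monotonicity of $F$ and regular variation now give $F((1 - \varepsilon)\log X) \ge ((1 - \varepsilon)^{\alpha} - \varepsilon)F(\log X)$ for large $X$. Letting $\varepsilon \to 0$ slowly, by a standard diagonal argument, yields $G(X) \ge (1 - o(1))F(\log X)$.

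For the infinitely-many-$n$ statement we need the gap to start near $\log p_{n}$ rather than at some point below $X$. Take $x$ with $Y(x) \ge F(x)$ and $x$ large, and let $p_{n}$ be the largest prime $\le m$. Bertrand's postulate gives $p_{n} \ge m/2 \ge P(x)/2$. On the other side, $p_{n} \le m < 2P(x)$, so $\log p_{n} = (1 + o(1))x$. Hence $p_{n+1} - p_{n} > F(x) = F((1 + o(1))\log p_{n})$. The uniform convergence theorem for regularly varying functions, or simply monotonicity combined with the fixed-$a$ limit applied at $a = 1 \pm \varepsilon$, turns this into $(1 - o(1))F(\log p_{n})$. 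Because arbitrarily large such $x$ exist, we obtain indices $n_{j} \to \infty$.

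The main obstacle is only the bookkeeping needed to pass from a fixed-$a$ limit to a $1 + o(1)$ factor. I would handle it with the sandwich $F((1 - \varepsilon)t) \le F(s) \le F((1 + \varepsilon)t)$ for $s/t \in [1 - \varepsilon, 1 + \varepsilon]$, which holds by monotonicity, followed by $\varepsilon \to 0$. The remaining steps are routine.
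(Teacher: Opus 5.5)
Your proposal is correct and follows the same route as the paper. You use a CRT shift $m \in [P(x), 2P(x))$ with $x = (1 - \varepsilon)\log X$, then $Y(x) < P(x)$ and Bertrand's postulate to place the gap below $X$, and for the individual gaps the monotonicity sandwich $F((1 - \varepsilon)t) \le F(s) \le F((1 + \varepsilon)t)$ together with regular variation at $a = 1 \pm \varepsilon$. The only difference is cosmetic: you justify $Y(x) < P(x)$ by counting the $P(x)\prod_{p \le x}(1 - 1/p) \ge 1$ uncovered residues modulo $P(x)$, whereas the paper exhibits an uncovered $b \le P(x)$ directly by the Chinese remainder theorem.
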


The condition on $F$ is called \emph{regular variation of index $\alpha$}. For example, an eventually increasing function of the form $c t^{\alpha}\prod_{j = 1}^{k}(\log_j t)^{\beta_j}$, with fixed $c > 0$, $k$ and real exponents $\beta_j$, satisfies this condition.

\begin{proof}[Proof of Proposition \ref{prop:covering-to-prime-gaps}]
Write
\begin{equation*}
P(x) := \prod_{p \, \le \, x}p.
\end{equation*}
We first record the elementary relation between a covering and a prime gap. Put $y = Y(x)$, and choose residue classes $a_p \bmod p$, one for each prime $p \le x$, covering $\{1,\ldots,y\}$.

For each $p \le x$, choose $b_p \not\equiv a_p \bmod p$. The Chinese remainder theorem gives an integer $b \in \{1,\ldots,P(x)\}$ with $b \equiv b_p \bmod p$ for every $p \le x$. Since $b$ belongs to none of the covering classes, we have $y < b \le P(x)$.

Another application of the Chinese remainder theorem gives an integer $M$ satisfying
\begin{equation*}
P(x) \le M < 2P(x), \qquad M \equiv -a_p \bmod p \quad (p \le x).
\end{equation*}
For sufficiently large $x$, we have $M > x$, so every integer $M + 1,\ldots,M + y$ is composite. Let $p_n$ be the largest prime not exceeding $M$. Then $p_{n + 1} > M + y$, and hence
\begin{equation*}
p_{n + 1} - p_n > y.
\end{equation*}
Since $y < P(x)$, Bertrand's postulate gives
\begin{equation*}
\frac{P(x)}{2} < p_n < p_{n + 1} < 6P(x).
\end{equation*}
Consequently,
\begin{equation}
\label{eq:transfer-crt-bound}
G(6P(x)) > Y(x),
\end{equation}
and the prime number theorem gives $\log p_n \sim \log P(x) \sim x$.

Now fix $0 < \epsilon < 1$ and put $x = (1 - \epsilon)\log X$. The prime number theorem gives
\begin{equation*}
6P(x) = X^{1 - \epsilon + o(1)} < X
\end{equation*}
for sufficiently large $X$. Since $G$ is nondecreasing, \eqref{eq:transfer-crt-bound} and the assumed covering bound give
\begin{equation*}
G(X) \ge G(6P(x)) > Y(x) \ge F((1 - \epsilon)\log X).
\end{equation*}
Regular variation therefore implies
\begin{equation*}
\liminf_{X \to \infty}\frac{G(X)}{F(\log X)} \ge (1 - \epsilon)^{\alpha}.
\end{equation*}
Letting $\epsilon$ tend to zero proves the assertion about $G(X)$.

For the individual gaps, let $x$ tend to infinity through values for which $Y(x) \ge F(x)$, and use the gaps constructed above. Their left endpoints tend to infinity and satisfy $\log p_n \sim x$. Monotonicity and regular variation imply $F(\log p_n) \sim F(x)$: for every fixed $0 < \epsilon < 1$ and sufficiently large $x$,
\begin{equation*}
F((1 - \epsilon)x) \le F(\log p_n) \le F((1 + \epsilon)x),
\end{equation*}
and division by $F(x)$ followed by passage to the limit proves the claim. Thus
\begin{equation*}
p_{n + 1} - p_n > Y(x) \ge F(x) = (1 + o(1))F(\log p_n).
\end{equation*}
Passing to a subsequence with strictly increasing indices completes the proof.
\end{proof}

\begin{proof}[Proof of Corollary~\ref{cor:classical-erdos-rankin-prime-gaps}]
Theorem~\ref{thm:classical-erdos-rankin-bound} gives $Y(x) \ge F(x)$ for all sufficiently large $x$, where
\begin{equation*}
F(x) := \frac{cx(\log x)(\log_{3} x)}{(\log_{2} x)^2}
\end{equation*}
for some fixed $c > 0$. This function is eventually increasing and regularly varying of index $1$. Proposition~\ref{prop:covering-to-prime-gaps} gives both conclusions.
\end{proof}

\begin{remark}
\label{rem:transfer-weaker-hypothesis}
Under the stated positivity and monotonicity assumptions, regular variation can be replaced by the weaker condition
\begin{equation*}
\lim_{\epsilon \downarrow 0}\liminf_{t \to \infty}\frac{F((1 - \epsilon)t)}{F(t)} = 1.
\end{equation*}
This is sufficient both for the passage to every sufficiently large height $X$ and for replacing $F(x)$ by $F(\log p_n)$ when $\log p_n \sim x$.
\end{remark}

\subsection{An elementary smooth-number bound}
\label{subsec:classical-smooth-number-bound}

The construction will leave two kinds of integers to be covered: primes and smooth numbers. For the latter, we need an upper bound that remains uniform as $u = \log H/\log z$ grows. The following estimate suffices.

The proof follows Constantinescu \cite[Lemma~2.3]{CON2018}, combining Rankin's method \cite{RAN1938} with Mertens' estimates and convexity. The same choice of exponent, $\sigma = 1 - \log u/\log z$, appears in Pomerance's exposition \cite[p.~139]{POM1989}, where the prime number theorem gives a sharper estimate for the Euler product. In the range needed below, the bound also follows from \cite[Lemma~7.5]{MV2006}.

Write $\Psi(H,z)$ for the number of $z$-smooth positive integers less than or equal to $H$.

\begin{lemma}
\label{lem:rankin-smooth-numbers}
Let $H \ge z \ge 2$, put
\begin{equation*}
u := \frac{\log H}{\log z},
\end{equation*}
and suppose that $u \ge 2$, $u \le \log z$ and $3\log u \le \log z$. Then
\begin{equation*}
\Psi(H,z) \ll H(\log z)\exp(-u\log u + u).
\end{equation*}
The implied constant is absolute.
\end{lemma}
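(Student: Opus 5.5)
We use Rankin's trick. For any $0 < \sigma \le 1$, every $z$-smooth $n \le H$ satisfies $(H/n)^{\sigma} \ge 1$. Hence
\begin{equation*}
\Psi(H,z) \le H^{\sigma}\sum_{\substack{n \ge 1 \\ n \text{ is } z\text{-smooth}}} n^{-\sigma} = H^{\sigma}\prod_{p \le z}\left(1 - p^{-\sigma}\right)^{-1}.
\end{equation*}
We take $\sigma := 1 - \eta$ with $\eta := \log u/\log z$. The hypothesis $3\log u \le \log z$ gives $\eta \le 1/3$, so $\sigma \ge 2/3$. The prefactor is then $H^{\sigma} = H\exp(-\eta\log H) = H\exp(-u\log u)$, which produces the main saving. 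It remains to show that the Euler product is at most $O((\log z)e^{u})$.

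For the product, we write
\begin{equation*}
\log\prod_{p \le z}\left(1 - p^{-\sigma}\right)^{-1} = \sum_{p \le z}p^{-\sigma} + O(1).
\end{equation*}
The error term is absolute because $\sigma \ge 2/3$, so $\sum_{p}\sum_{k \ge 2}p^{-k\sigma}$ converges uniformly. Next we split $p^{-\sigma} = p^{-1}p^{\eta} = p^{-1} + p^{-1}(p^{\eta} - 1)$. By Mertens, $\sum_{p \le z}1/p = \log_{2} z + O(1)$, and this term contributes the factor $\log z$.

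The remaining term is handled by convexity, which is the main step. Since $s \mapsto e^{s} - 1$ is convex and vanishes at $0$, for $0 \le s \le \eta\log z = \log u$ we have
\begin{equation*}
e^{s} - 1 \le \frac{s}{\log u}(u - 1).
\end{equation*}
Taking $s = \eta\log p$ gives
\begin{equation*}
p^{\eta} - 1 \le \frac{(u - 1)\log p}{\log z} \qquad (p \le z).
\end{equation*}
Mertens' first estimate, $\sum_{p \le z}(\log p)/p = \log z + O(1)$, then yields
\begin{equation*}
\sum_{p \le z}\frac{p^{\eta} - 1}{p} \le (u - 1)\left(1 + \frac{O(1)}{\log z}\right) = u - 1 + O\!\left(\frac{u}{\log z}\right).
\end{equation*}
The hypothesis $u \le \log z$ makes the final error $O(1)$. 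This is the point where both side conditions are used. The condition $u \ge 2$ only ensures $\log u > 0$, so that $\sigma < 1$ and the convexity chord is nondegenerate.

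Combining these estimates, the logarithm of the Euler product is at most $\log_{2} z + u + O(1)$. Multiplying by $H\exp(-u\log u)$ gives
\begin{equation*}
\Psi(H,z) \ll H(\log z)\exp(-u\log u + u),
\end{equation*}
with an absolute implied constant. The only delicate step is the convexity bound for $p^{\eta} - 1$. It must be uniform over all $p \le z$ and must be checked to give exactly $e^{u}$, rather than $e^{Cu}$, without invoking the prime number theorem. We expect the chord argument above to accomplish this.
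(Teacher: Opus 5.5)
Your proposal is correct and matches the paper's proof step for step. Both use Rankin's trick with $\sigma = 1 - \log u/\log z$, the bound $\sigma \ge 2/3$ to make the prime-power terms $O(1)$, and Mertens' estimates for $\sum_{p \le z} 1/p$ and $\sum_{p \le z} (\log p)/p$; both then apply the convexity chord $e^{ct} - 1 \le (e^{c} - 1)t$ with $c = \log u$ and $t = \log p/\log z$, using $u \le \log z$ to absorb the resulting error into $O(1)$.
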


\begin{proof}
For $0 < \sigma < 1$, Rankin's trick gives
\begin{equation*}
\Psi(H,z) \le \sum_{\substack{n \, \ge \, 1 \\ p \mid n \,\Rightarrow\, p \, \le \, z}}\left(\frac{H}{n}\right)^{\sigma} = H^{\sigma}\prod_{p \, \le \, z}\left(1 - p^{-\sigma}\right)^{-1}.
\end{equation*}
Choose
\begin{equation*}
\sigma := 1 - \frac{\log u}{\log z}.
\end{equation*}

The assumption $3\log u \le \log z$ gives $\sigma \ge 2/3$. Consequently,
\begin{equation*}
\log\prod_{p \, \le \, z}\left(1 - p^{-\sigma}\right)^{-1} = \sum_{p \, \le \, z}\frac{1}{p^{\sigma}} + O(1),
\end{equation*}
since the contribution from the second and higher powers is bounded uniformly.

We write
\begin{equation*}
\sum_{p \, \le \, z}\frac{1}{p^{\sigma}} = \sum_{p \, \le \, z}\frac{1}{p} + \sum_{p \, \le \, z}\frac{p^{1 - \sigma} - 1}{p}.
\end{equation*}
Mertens' estimates give
\begin{equation*}
\sum_{p \, \le \, z}\frac{1}{p} = \log_{2} z + O(1), \qquad \sum_{p \, \le \, z}\frac{\log p}{p} = \log z + O(1).
\end{equation*}
For $0 \le t \le 1$ and $c > 0$, convexity gives
\begin{equation*}
e^{ct} - 1 \le (e^{c} - 1)t.
\end{equation*}
Taking $c = \log u$ and $t = \log p/\log z$, we obtain
\begin{equation*}
\sum_{p \, \le \, z}\frac{p^{1 - \sigma} - 1}{p} \le \frac{u - 1}{\log z}\sum_{p \, \le \, z}\frac{\log p}{p} = u + O(1),
\end{equation*}
where the last error is bounded because $u \le \log z$. It follows that
\begin{equation*}
\prod_{p \, \le \, z}\left(1 - p^{-\sigma}\right)^{-1} \ll e^{u}\log z.
\end{equation*}
Finally, since $\log H = u\log z$,
\begin{equation*}
H^{\sigma} = H\exp\left(-\frac{\log u}{\log z}\log H\right) = H\exp(-u\log u).
\end{equation*}
Combining the last two estimates with Rankin's trick proves the result.
\end{proof}

\subsection{Construction of the covering}
\label{subsec:classical-covering-construction}

We divide the primes up to $x$ into four ranges,
\begin{equation*}
[2,w], \qquad (w,z], \qquad (z,v], \qquad (v,x],
\end{equation*}
and use them in three stages. An integer is \emph{uncovered} after a stage if none of the residue classes chosen so far covers it. The primes $p \le v$ are the \emph{preliminary primes}, their chosen classes are the \emph{preliminary classes}, and removing the integers they cover from $\{1,\ldots,H\}$ is the \emph{preliminary sieve}. Its \emph{survivors} are the integers left uncovered after all preliminary classes have been chosen; they retain this designation even if covered later. The primes in $(v,x]$ are the \emph{reserve primes}, whose classes are chosen after the preliminary sieve.

A preliminary prime for which we prescribe $a_p = 0$ deterministically is called a \emph{clamping prime}, and its prescribed zero class a \emph{clamping class}. Here we clamp the primes in the first and third ranges, restricting the possible prime factors of the integers left uncovered. The condition $H \le wv$ will ensure that every such integer is either $z$-smooth or a prime greater than $v$.

The second range is used \emph{greedily}: for each prime in turn, we choose a residue class containing as many currently uncovered integers as possible. Since these integers are partitioned among $p$ residue classes, such a choice covers at least a proportion $1/p$ of them. We then repeat this procedure with the integers still uncovered. This completes the preliminary sieve.

Finally, we assign a distinct reserve prime to each survivor and choose its class to cover that survivor. We call this operation \emph{mopping up}. In the classical construction all reserve primes are held for this purpose, and we also call them \emph{mopping primes}. The estimates below show that there are enough of them.

\begin{proof}[Proof of Theorem \ref{thm:classical-erdos-rankin-bound}]
Fix a constant $0 < \theta < 1/2$. Throughout this proof, asymptotic statements refer to $x \to \infty$, with $\theta$ and the constant $c > 0$ chosen below held fixed. All other parameters are functions of $x$. 

\proofstep{Choice of parameters} We shall prove the result with
\begin{align*}
w & := \frac{(\log x)(\log_{3} x)}{(\log_{2} x)^2}, \\
z & := \exp\left(\frac{\theta(\log x)(\log_{3} x)}{\log_{2} x}\right), \\
v & := \frac{x}{2}, \\
H & := \left\lfloor \frac{cx(\log x)(\log_{3} x)}{(\log_{2} x)^2}\right\rfloor,
\end{align*}
where $c > 0$ is a sufficiently small constant depending only on $\theta$. For sufficiently large $x$,
\begin{equation*}
2 < w < z < v < x.
\end{equation*}
Moreover, if $c \le 1/2$, then
\begin{equation*}
H \le wv.
\end{equation*}

\proofstep{Clamping stage}
Choose $a_p = 0$ for $p \le w$ and $z < p \le v$, and let $\mathcal{N}_{0}$ denote the integers in $\{1,\ldots,H\}$ left uncovered after these choices.

Every prime factor of an integer in $\mathcal{N}_{0}$ lies in $(w,z]$ or exceeds $v$. If $n \in \mathcal{N}_{0}$ is not $z$-smooth, it therefore has a prime factor $q > v$. Write $n = qm$. If $m > 1$, then $m$ has a prime factor exceeding $w$, so
\begin{equation*}
n = qm > vw \ge H,
\end{equation*}
a contradiction. Thus $m = 1$ and $n = q$ is prime. Every member of $\mathcal{N}_{0}$ is therefore either $z$-smooth or a prime in $(v,H]$, giving
\begin{equation*}
\#\mathcal{N}_{0} \le \Psi(H,z) + \pi(H) - \pi(v) \le \Psi(H,z) + \pi(H).
\end{equation*}

\proofstep{Greedy stage}
For any finite set $\mathcal{S}$ of integers and any prime $p$,
\begin{equation*}
\#\mathcal{S} = \sum_{a \bmod p}\#\{n \in \mathcal{S} : n \equiv a \bmod p\}.
\end{equation*}
There are $p$ summands, so at least one is at least their average, $\#\mathcal{S}/p$. A residue class containing as many elements of $\mathcal{S}$ as possible therefore leaves at most $(1 - 1/p)\#\mathcal{S}$ uncovered.

Starting with $\mathcal{N}_{0}$, we make such a choice successively for each prime $w < p \le z$, applying it each time to the set of integers still uncovered. This completes the preliminary sieve, whose survivor set $\mathcal{N}$ thus satisfies
\begin{equation*}
\#\mathcal{N} \le \#\mathcal{N}_{0}\prod_{w \, < \, p \, \le \, z}\left(1 - \frac{1}{p}\right) \le \left(\Psi(H,z) + \pi(H)\right)\prod_{w \, < \, p \, \le \, z}\left(1 - \frac{1}{p}\right).
\end{equation*}
By Mertens' theorem,
\begin{equation*}
\prod_{w \, < \, p \, \le \, z}\left(1 - \frac{1}{p}\right) \sim \frac{\log w}{\log z} \sim \frac{(\log_{2} x)^2}{\theta(\log x)(\log_{3} x)}.
\end{equation*}

\proofstep{Counting the survivors}
We first estimate the contribution from the primes. Since $\log H \sim \log x$,
\begin{equation}
\label{eq:classical-prime-contribution}
\pi(H)\prod_{w \, < \, p \, \le \, z}\left(1 - \frac{1}{p}\right) \ll \frac{H}{\log H}\frac{\log w}{\log z} \ll \frac{cx}{\log x}.
\end{equation}
The implied constant may depend on $\theta$, but can be taken independent of $c$; the threshold for $x$ may depend on both.

It remains to control the smooth integers. Put
\begin{equation*}
u := \frac{\log H}{\log z}.
\end{equation*}
Our choices give
\begin{equation*}
u \sim \frac{\log_{2} x}{\theta\log_{3} x}, \qquad \log u \sim \log_{3} x,
\end{equation*}
and therefore
\begin{equation*}
u\log u = \left(\frac{1}{\theta} + o(1)\right)\log_{2} x.
\end{equation*}
The hypotheses of Lemma~\ref{lem:rankin-smooth-numbers} hold for sufficiently large $x$, so
\begin{equation*}
\Psi(H,z) \ll H(\log z)(\log x)^{-1/\theta + o(1)}.
\end{equation*}
After the greedy stage, the contribution of these integers is at most
\begin{equation}
\label{eq:classical-smooth-contribution}
\begin{aligned}
\Psi(H,z)\prod_{w \, < \, p \, \le \, z}\left(1 - \frac{1}{p}\right) 
& \ll H(\log w)(\log x)^{-1/\theta + o(1)} \\
& \ll \frac{x\log_{3} x}{\log_{2} x}(\log x)^{1 - 1/\theta + o(1)} \\
& = o\left(\frac{x}{\log x}\right),
\end{aligned}
\end{equation}
because $\theta < 1/2$.

\proofstep{Mopping up}
By the prime number theorem, the number of mopping primes satisfies
\begin{equation*}
\pi(x) - \pi(v) \sim \frac{x}{2\log x}.
\end{equation*}
Choose $c$ sufficiently small in terms of $\theta$. Equations~\eqref{eq:classical-prime-contribution} and \eqref{eq:classical-smooth-contribution} then give
\begin{equation*}
\#\mathcal{N} \le \pi(x) - \pi(v)
\end{equation*}
for all sufficiently large $x$. Assign a distinct prime $p \in (v,x]$ to each $n \in \mathcal{N}$ and choose $a_p \equiv n \bmod p$. Give any unused primes in this range arbitrary residue classes. The resulting classes cover $\{1,\ldots,H\}$, and hence $Y(x) \ge H$.
\end{proof}

\subsection{Choosing the parameters}
\label{subsec:classical-parameter-balance}

The proof specifies the parameters at the outset. We now explain how their sizes arise from balancing the prime and smooth contributions against the supply of mopping primes. Write
\begin{equation*}
L_1 := \log x, \qquad L_2 := \log L_1, \qquad L_3 := \log L_2.
\end{equation*}
Put $H = xh$, where the enlargement factor $h$ is to be chosen. We seek an enlargement factor $h \to \infty$ in the range $h = x^{o(1)}$, so that $\log H \sim L_1$. Taking $v = x/2$ and $w = 2h$ ensures that $H = wv$ and leaves $\asymp x/L_1$ mopping primes. With
\begin{equation*}
u := \frac{\log H}{\log z},
\end{equation*}
we have $\log z \sim L_1/u$ and $\log w \sim \log h$.

For parameters satisfying the hypotheses of Lemma~\ref{lem:rankin-smooth-numbers}, the estimates in the proof give
\begin{align*}
\frac{\pi(H)\prod_{w \, < \, p \, \le \, z}\,(1 - 1/p)}{x/L_1} & \ll \frac{hu\log h}{L_1}, \\
\frac{\Psi(H,z)\prod_{w \, < \, p \, \le \, z}\,(1 - 1/p)}{x/L_1} & \ll hL_1(\log h)\exp(-u\log u + u).
\end{align*}
To leave enough mopping primes for a distinct assignment to every survivor, we keep the prime contribution below a sufficiently small fixed proportion of $x/L_1$. For a given $u$, this permits us to choose $h$ by
\begin{equation*}
h\log h = \frac{\delta L_1}{u},
\end{equation*}
where $\delta > 0$ is a sufficiently small fixed constant. Substituting this choice into the second bound gives
\begin{equation*}
\frac{\Psi(H,z)\prod_{w \, < \, p \, \le \, z}\,(1 - 1/p)}{x/L_1} \ll \delta\frac{L_1^{2}}{u}\exp(-u\log u + u).
\end{equation*}
Apart from the fixed factor $\delta$, the logarithm of the expression on the right is
\begin{equation*}
2L_2 - u\log u + u - \log u.
\end{equation*}
We therefore want $u\log u$ to exceed $2L_2$ by a sufficient margin, while keeping $u$ small enough to allow a large value of $h$.

For fixed $\delta$, the relation $h\log h = \delta L_1/u$ makes $h$ decrease as $u$ increases. We therefore seek the smallest $u$ for which the smooth contribution is controlled. The threshold suggested by our bound is
\begin{equation*}
u\log u - u + \log u = 2L_2,
\end{equation*}
whose solution satisfies $u \sim 2L_2/L_3$. Thus the scale of $u$ is determined by the estimates. To stay above this threshold by a fixed margin, we take
\begin{equation*}
u = \frac{L_2}{\theta L_3}, \qquad 0 < \theta < \frac{1}{2},
\end{equation*}
with $\theta$ fixed. Then $\log u \sim L_3$, and the logarithmic expression above is $(2 - 1/\theta + o(1))L_2$, which tends to $-\infty$. The smooth contribution is therefore negligible compared with the number of mopping primes.

The equation defining $h$ now becomes
\begin{equation*}
h\log h = \delta\theta\frac{L_1L_3}{L_2}.
\end{equation*}
Taking logarithms gives $\log h + \log\log h = L_2 - L_3 + \log L_3 + O(1)$, and hence $\log h \sim L_2$. Consequently,
\begin{equation*}
H = xh \sim \delta\theta\frac{xL_1L_3}{L_2^{2}}, \qquad \log z \sim \frac{\theta L_1L_3}{L_2}.
\end{equation*}
These choices satisfy the assumed parameter restrictions for sufficiently large $x$. They recover the scale used in the proof: one factor $L_2$ in the denominator comes from $\log h \sim \log w$, while the other, together with $L_3$ in the numerator, comes from the size of $u$ required to control the smooth contribution.

This calculation explains the scale obtained from the estimates used here. Sharper smooth-number estimates improve the permissible constants without changing that scale. It does not establish an upper bound for what every construction using these four prime ranges and one mopping prime per survivor can achieve.

\subsection{A fixed choice of residue classes}
\label{subsec:fixed-residue-classes}

The second stage of the construction need not be greedy. Following Erd\H{o}s \cite{ERD1935}, we may choose $a_p = -1$ for every prime $w < p \le z$. The primes that survive then avoid one prescribed nonzero residue class modulo each of these primes. An upper-bound sieve shows that there are sufficiently few such integers.

Erd\H{o}s states the required estimate in \cite[Lemma~1]{ERD1935}, as a consequence of Brun's method \cite{BRU1920}. We derive the version needed here from the following standard consequence of Selberg's sieve, recorded in \cite[Theorem~3.13]{MV2006}. This separates the choice of residue classes from the estimate that justifies it.

\begin{lemma}[Upper-bound sieve]
\label{lem:fixed-upper-bound-sieve}
Let $T \ge 4$. For each prime $p \le \sqrt{T}$, let $\Omega_p$ be a set of residue classes modulo $p$, and write $\omega(p) := \#\Omega_p$. Suppose that $\omega(2) \le 1$ and $\omega(p) \le 2$ for $p > 2$. Then
\begin{equation*}
\#\{n \in \mathbb{N} : n \le T,\ n \bmod p \notin \Omega_p \text{ for every } p \le \sqrt{T}\,\} \ll T\prod_{p \, \le \, \sqrt{T}}\left(1 - \frac{\omega(p)}{p}\right),
\end{equation*}
with an absolute implied constant.
\end{lemma}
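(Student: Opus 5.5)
The plan is to derive the lemma from the Selberg upper-bound sieve, after first shrinking the sifting range so that the sieve level is comfortably below $T$. Write $S$ for the set being counted, and put $y := T^{1/10}$. Discarding the conditions $n \bmod p \notin \Omega_p$ for the primes $y < p \le \sqrt{T}$ can only enlarge the set. Hence
\begin{equation*}
\#S \le \#\{n \le T : n \bmod p \notin \Omega_p \text{ for every } p \le y\}.
\end{equation*}
Afterwards we restore the missing primes in the product by a Mertens-type comparison. The hypotheses $\omega(p) \le 2$ and $\omega(2) \le 1$ give
\begin{equation*}
\prod_{y < p \le \sqrt{T}}\left(1 - \frac{\omega(p)}{p}\right)^{-1} \le \prod_{y < p \le \sqrt{T}}\left(1 - \frac{2}{p}\right)^{-1} \ll \left(\frac{\log\sqrt{T}}{\log y}\right)^{2} \ll 1.
\end{equation*}
It therefore suffices to prove the bound with $\sqrt{T}$ replaced by $y$ in both the sifting condition and the product.

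For the reduced problem, apply Selberg's sieve at level $D = y^{2}$, with density $g(p) = \omega(p)/p$ and $h(p) = \omega(p)/(p - \omega(p))$. This is well defined because $\omega(p) < p$ for every $p$; this is exactly where $\omega(2) \le 1$ is used. The standard form of the sieve gives
\begin{equation*}
\#S \le \frac{T}{G(y)} + \sum_{\substack{d \le y^{2} \\ d \mid P(y)}} 3^{\omega(d)}|r_d|, \qquad G(y) := \sum_{\substack{d \le y \\ d \mid P(y)}} \prod_{p \mid d} h(p),
\end{equation*}
where $P(y)$ is the product of the primes $p \le y$. By the Chinese remainder theorem, the set $\{n \le T\}$ meets exactly $\omega(d) := \prod_{p \mid d}\omega(p)$ classes modulo $d$ in which some member is excluded, so $|r_d| \le \prod_{p \mid d}\omega(p) \le 2^{\omega(d)}$. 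The remainder is therefore at most $\sum_{d \le y^{2}} 6^{\omega(d)} \ll y^{2}(\log y)^{5} = T^{1/5 + o(1)}$. This is negligible compared with the main term, since $T\prod_{p \le \sqrt{T}}(1 - \omega(p)/p) \gg T/(\log T)^{2}$ by Mertens.

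The main obstacle is the lower bound
\begin{equation*}
G(y) \gg \prod_{p \le y}\bigl(1 + h(p)\bigr) = \prod_{p \le y}\left(1 - \frac{\omega(p)}{p}\right)^{-1},
\end{equation*}
with an absolute constant. In this sum the truncation is $d \le y$, whereas the full Euler product runs over all $d \mid P(y)$. I would handle it as follows.
\begin{itemize}
\item First try a Rankin-type tail estimate. The discarded part of the Euler product satisfies
\begin{equation*}
\sum_{\substack{d \mid P(y) \\ d > y}} h(d) \le y^{-\epsilon}\prod_{p \le y}\bigl(1 + h(p)p^{\epsilon}\bigr).
\end{equation*}
Taking $\epsilon = c/\log y$, the ratio of this product to the untruncated one is $\exp\bigl(O(\sum_{p \le y} 2(p^{\epsilon} - 1)/p)\bigr) = e^{O(c)}$. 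Against the factor $y^{-\epsilon} = e^{-c}$ this is only a constant-factor saving, so the parameters need tuning to show that at least a fixed proportion of the full product survives the truncation.
\item If that balancing becomes awkward, fall back on the standard dimension-$\kappa$ lower bound for $G$, for instance the lemma in Halberstam--Richert for $\kappa \le 2$. Alternatively, note that this is exactly the content packaged in \cite[Theorem~3.13]{MV2006}, and cite it directly once the hypotheses $\omega(p) \le 2$, $\omega(2) \le 1$ and the level $\sqrt{T}$ have been checked against its statement.
\end{itemize}
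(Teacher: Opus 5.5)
Your fallback is what the paper does. Its proof quotes \cite[Theorem~3.13]{MV2006}, which bounds the count by a quantity of order $T(\log T)^{-2}\prod_{p\le\sqrt T}(1-\omega(p)/p)(1-1/p)^{-2}$. It then uses Mertens' theorem to replace $\prod_{p\le\sqrt T}(1-1/p)^{-2}$ by $\asymp(\log T)^{2}$. Your reduction to $y=T^{1/10}$, the Selberg setup and the remainder estimate are sound, though $\omega(d)$ is doing double duty, as $\prod_{p\mid d}\omega(p)$ and as the exponent in $3^{\omega(d)}$. They are only needed if you want to reprove that theorem.

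If you want a self-contained proof, the first bullet fails as stated, and no choice of $\epsilon$ can rescue it.
\begin{itemize}
\item \emph{Why it fails.} Let $\mu$ be the probability measure on the divisors $d\mid P(y)$ proportional to $h(d)$. Under $\mu$ the events $p\mid d$ are independent with probabilities $h(p)/(1+h(p))=\omega(p)/p$. Your Rankin bound, divided by the full product $\prod_{p\le y}(1+h(p))$, equals $\mathbb{E}_{\mu}(d/y)^{\epsilon}$. Jensen's inequality gives $\mathbb{E}_{\mu}(d/y)^{\epsilon}\ge\exp\bigl(\epsilon(\mathbb{E}_{\mu}\log d-\log y)\bigr)$, where $\mathbb{E}_{\mu}\log d=\sum_{p\le y}\omega(p)(\log p)/p$. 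In the admissible extreme case $\omega(p)=2$ for all $p>2$, this mean is $2\log y+O(1)$. So for large $y$ the bound exceeds the full product for every $\epsilon>0$. In dimension $2$ the cut $d\le y$ lies below the mean of $\log d$, while Rankin's trick only controls deviations above the mean. There is no constant-factor saving to tune.
\item \emph{The repair.} Separate the sifting primes from the cut. Since $h\ge0$, you have $G(y)\ge\sum_{d\mid P(w),\,d\le y}h(d)$ with $w=y^{\delta}$; now the mean of $\log d$ is about $2\delta\log y$, well below the cut. Take $\epsilon=1/\log w$. The convexity bound $p^{\epsilon}-1\le(e-1)(\log p)/\log w$ and Mertens show that the discarded part is at most $e^{O(1)-1/\delta}\prod_{p\le w}(1+h(p))$. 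This is at most half of that product once $\delta$ is a small absolute constant.
\item \emph{Restoring the primes.} The primes in $(w,y]$ cost only $\prod_{w<p\le y}(1-\omega(p)/p)^{-1}\ll\delta^{-2}$, by the same comparison as in your first step.
\end{itemize}
This gives $G(y)\gg\prod_{p\le y}(1-\omega(p)/p)^{-1}$ with an absolute constant, and completes your argument without appeal to the literature.
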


Indeed, Theorem~3.13 of \cite{MV2006} gives an upper bound of order
\begin{equation*}
\frac{T}{(\log T)^2}\prod_{p \, \le \, \sqrt{T}}\left(1 - \frac{\omega(p)}{p}\right)\left(1 - \frac{1}{p}\right)^{-2},
\end{equation*}
and Mertens' theorem gives the stated form.

For the fixed choice $a_p = -1$, the relevant consequence is the following.

\begin{lemma}[Primes avoiding the fixed classes]
\label{lem:fixed-prime-survivors}
Let $H \ge 16$ and $2 \le w < z \le \sqrt{H}$. Define
\begin{equation*}
\mathcal{Q}(H; w, z) := \{q \le H : q \text{ is prime},\ q \not\equiv -1 \bmod p \text{ for every prime } w < p \le z\}.
\end{equation*}
Then
\begin{equation*}
\#\mathcal{Q}(H;w,z) \ll \frac{H}{\log H}\prod_{w \, < \, p \, \le \, z}\left(1 - \frac{1}{p - 1}\right) \ll \frac{H\log w}{(\log H)\log z},
\end{equation*}
with absolute implied constants.
\end{lemma}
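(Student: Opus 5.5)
We deduce Lemma~\ref{lem:fixed-prime-survivors} from the upper-bound sieve of Lemma~\ref{lem:fixed-upper-bound-sieve}, applied with $T = H$. The idea is to sieve out two classes at each prime in $(w,z]$: zero (to exclude primes $q \le z$ implicitly) and $-1$. For primes $p \le w$ and $z < p \le \sqrt{H}$ we sieve out only zero. Splitting $\mathcal{Q}(H;w,z)$ by size, the primes $q \le \sqrt{H}$ number at most $\sqrt{H} \ll H/(\log H)^{2}$. This is acceptable, since $\log w/\log z \ge \log 2/\log\sqrt{H} \gg 1/\log H$. Any prime $q > \sqrt{H}$ in $\mathcal{Q}(H;w,z)$ has $q \not\equiv 0 \bmod p$ for all $p \le \sqrt{H}$, and $q \not\equiv -1 \bmod p$ for $w < p \le z$. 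So it is counted by the sieve with
\begin{equation*}
\Omega_{p} = \{0, -1\} \ (w < p \le z), \qquad \Omega_{p} = \{0\} \ (\text{otherwise}).
\end{equation*}
For $p = 2$ we need $\omega(2) \le 1$. If $2 \in (w,z]$, the classes $0$ and $-1$ exhaust all residues mod $2$, so we instead take $\Omega_{2} = \{0\}$ and drop the condition at $2$. This costs only a bounded factor $(1 - 1/(2-1))$, which we must handle separately. The cleanest route is to restrict to $w \ge 2$ with $p = 2$ treated via $\Omega_2=\{0\}$, absorbing the omitted factor into the constant, since the product over $p>2$ is unaffected.

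The sieve then gives
\begin{equation*}
\#\mathcal{Q} \ll \sqrt{H} + H\prod_{p \le \sqrt{H}}\Bigl(1 - \frac{1}{p}\Bigr)\prod_{\substack{w < p \le z\\ p>2}}\frac{1 - 2/p}{1 - 1/p}.
\end{equation*}
Mertens' theorem gives $\prod_{p \le \sqrt{H}}(1 - 1/p) \ll 1/\log H$. The identity
\begin{equation*}
\frac{1 - 2/p}{1 - 1/p} = 1 - \frac{1}{p - 1}
\end{equation*}
yields the first stated bound. For the second, we use $1 - 1/(p-1) \le 1 - 1/p$, and Mertens' theorem again gives $\prod_{w < p \le z}(1 - 1/p) \ll \log w/\log z$, uniformly with absolute constants.

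The only delicate points are bookkeeping. The first is the prime $p = 2$ when $w < 2$; this cannot occur here since $w \ge 2$ and the range is $w < p$. So $2 \notin (w,z]$ automatically, and the condition $\omega(2) \le 1$ holds with $\Omega_{2} = \{0\}$. The second is checking that $z \le \sqrt{H}$, so every sieving prime lies in the range allowed by Lemma~\ref{lem:fixed-upper-bound-sieve}. The third is absorbing the small primes $q \le \sqrt{H}$. I expect no genuine obstacle beyond ensuring that all implied constants are absolute, which follows from the absolute constants in Lemma~\ref{lem:fixed-upper-bound-sieve} and in Mertens' estimates.
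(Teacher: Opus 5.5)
Your route is the paper's: the same sets $\Omega_p$, the sieve lemma with $T = H$, the identity $(1-2/p)/(1-1/p) = 1 - 1/(p-1)$, and Mertens' theorem. There is, however, a gap in the first (sharper) inequality.

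Write $P := \prod_{w<p\le z}\bigl(1 - \frac{1}{p-1}\bigr)$. The sieve gives $\#\mathcal{Q}(H;w,z) \ll \sqrt{H} + \frac{H}{\log H}P$. To remove the $\sqrt{H}$ term you need $\sqrt{H} \ll \frac{H}{\log H}P$, which is a \emph{lower} bound for $P$. What you actually proved is $\sqrt{H} \ll \frac{H\log w}{(\log H)\log z}$. That absorbs $\sqrt{H}$ only into the weaker right-hand bound. Your only comparison between $P$ and $\log w/\log z$ is $1 - 1/(p-1) \le 1 - 1/p$, and that inequality goes the wrong way for this purpose. So, as written, the middle bound $\#\mathcal{Q} \ll \frac{H}{\log H}P$ is not established; the words ``yields the first stated bound'' skip this step.

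The fix is the one the paper uses. Factor
\begin{equation*}
1 - \frac{1}{p-1} = \Bigl(1 - \frac{1}{p}\Bigr)\Bigl(1 - \frac{1}{(p-1)^{2}}\Bigr).
\end{equation*}
Every prime in the range exceeds $2$, and $\sum_{p}(p-1)^{-2}$ converges, so the second product lies between two positive absolute constants. Combined with the two-sided Mertens estimate $\prod_{w<p\le z}(1-1/p) \asymp \log w/\log z$, this gives $P \asymp \log w/\log z \ge 2\log 2/\log H$. Hence $\sqrt{H} \ll H/(\log H)^{2} \ll \frac{H}{\log H}P$, and both inequalities follow with absolute constants.

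Separately, delete your contingency for $2 \in (w,z]$. The ``bounded factor'' $1 - 1/(2-1)$ is $0$, so that case could never be absorbed into a constant. It is simply excluded by $w \ge 2$, as you observe at the end.
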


\begin{proof}
For primes $p \le \sqrt{H}$, take
\begin{equation*}
\Omega_p := \begin{cases} \{0,-1\} \bmod p, & w < p \le z, \\ \{0\} \bmod p, & \text{otherwise}. \end{cases}
\end{equation*}
Every member of $\mathcal{Q}(H;w,z)$ exceeding $\sqrt{H}$ avoids these classes. Since $w \ge 2$, the two classes are distinct whenever both are prescribed, and $\omega(2) = 1$. Lemma~\ref{lem:fixed-upper-bound-sieve} therefore gives
\begin{equation*}
\#\mathcal{Q}(H;w,z) \ll \sqrt{H} + H\prod_{p \, \le \, \sqrt{H}}\left(1 - \frac{1}{p}\right)\prod_{w \, < \, p \, \le \, z}\frac{1 - 2/p}{1 - 1/p}.
\end{equation*}
The first product is $\asymp 1/\log H$ by Mertens' theorem, while
\begin{equation*}
\frac{1 - 2/p}{1 - 1/p} = 1 - \frac{1}{p - 1}.
\end{equation*}
Moreover,
\begin{equation*}
\prod_{w \, < \, p \, \le \, z}\left(1 - \frac{1}{p - 1}\right) = \prod_{w \, < \, p \, \le \, z}\left(1 - \frac{1}{p}\right)\prod_{w \, < \, p \, \le \, z}\left(1 - \frac{1}{(p - 1)^2}\right) \asymp \frac{\log w}{\log z}.
\end{equation*}
Here the second product is bounded above and below by positive absolute constants, since all its primes exceed $2$ and $\sum_{p \, > \, 2}(p - 1)^{-2}$ converges. Finally, $\log w/\log z \gg 1/\log H$, so the term $\sqrt{H}$ is absorbed into the asserted bound.
\end{proof}

We can now give a second proof of Theorem~\ref{thm:classical-erdos-rankin-bound}. The clamping and mopping stages are unchanged. The fixed second stage is estimated only on the primes; every smooth integer is allowed to remain. We compensate by choosing a slightly smaller smoothness cutoff.

\begin{proof}[Second proof of Theorem~\ref{thm:classical-erdos-rankin-bound}]
Fix a constant $0 < \theta < 1/3$. Throughout this proof, asymptotic statements refer to $x \to \infty$, with $\theta$ and the constant $c > 0$ chosen below held fixed.

\proofstep{Choice of parameters}
Put
\begin{align*}
w & := \frac{(\log x)(\log_{3} x)}{(\log_{2} x)^2}, \\
z & := \exp\left(\frac{\theta(\log x)(\log_{3} x)}{\log_{2} x}\right), \\
v & := \frac{x}{2}, \\
H & := \left\lfloor \frac{cx(\log x)(\log_{3} x)}{(\log_{2} x)^2}\right\rfloor,
\end{align*}
where $0 < c \le 1/2$ will be chosen sufficiently small in terms of $\theta$. For sufficiently large $x$,
\begin{equation*}
2 < w < z < v < H, \qquad z \le \sqrt{H}, \qquad H \le wv.
\end{equation*}

\proofstep{Clamping stage}
Choose $a_p = 0$ for primes $p \le w$ and $z < p \le v$, and let $\mathcal{N}_{0}$ be the set of integers in $\{1,\ldots,H\}$ left uncovered after these choices. Every prime factor of a member of $\mathcal{N}_{0}$ lies in $(w,z]$ or exceeds $v$. If $n \in \mathcal{N}_{0}$ has a prime factor $q > v$ and is composite, its complementary factor has a prime factor exceeding $w$, giving
\begin{equation*}
n > vw \ge H.
\end{equation*}
Thus every member of $\mathcal{N}_{0}$ is either $z$-smooth or a prime in $(v,H]$.

\proofstep{Fixed-residue stage}
For every prime $w < p \le z$, choose $a_p = -1$. Write
\begin{equation*}
\mathcal{N} := \{n \in \mathcal{N}_{0} : n \not\equiv -1 \bmod p \text{ for every prime } w < p \le z\}.
\end{equation*}
Every prime member of $\mathcal{N}$ belongs to $\mathcal{Q}(H;w,z)$. Consequently,
\begin{equation*}
\#\mathcal{N} \le \Psi(H,z) + \#\mathcal{Q}(H;w,z).
\end{equation*}

\proofstep{Counting the survivors}
Since $\log H \sim \log x$ and $\log w \sim \log_{2} x$, Lemma~\ref{lem:fixed-prime-survivors} gives
\begin{equation}
\label{eq:fixed-prime-contribution}
\#\mathcal{Q}(H;w,z) \ll \frac{H\log w}{(\log H)\log z} \ll \frac{cx}{\log x}.
\end{equation}
The implied constant may depend on $\theta$, but can be taken independent of $c$; the threshold for $x$ may depend on both.

For the smooth integers, put
\begin{equation*}
u := \frac{\log H}{\log z}.
\end{equation*}
Our choices give
\begin{equation*}
u \sim \frac{\log_{2} x}{\theta\log_{3} x}, \qquad \log u \sim \log_{3} x, \qquad u\log u = \left(\frac{1}{\theta} + o(1)\right)\log_{2} x.
\end{equation*}
The hypotheses of Lemma~\ref{lem:rankin-smooth-numbers} hold for sufficiently large $x$. Since $u = o(\log_{2} x)$, that lemma yields
\begin{equation}
\label{eq:fixed-smooth-contribution}
\begin{aligned}
\Psi(H,z) & \ll H(\log z)\exp(-u\log u + u) \\
& \ll H(\log z)(\log x)^{-1/\theta + o(1)} \\
& \ll x(\log x)^{2 - 1/\theta + o(1)} \\
& = o\left(\frac{x}{\log x}\right),
\end{aligned}
\end{equation}
because $\theta < 1/3$.

\proofstep{Mopping up}
By the prime number theorem, the number of mopping primes satisfies
\begin{equation*}
\pi(x) - \pi(v) \sim \frac{x}{2\log x}.
\end{equation*}
Choose $c$ sufficiently small in terms of $\theta$. Equations~\eqref{eq:fixed-prime-contribution} and \eqref{eq:fixed-smooth-contribution} then give
\begin{equation*}
\#\mathcal{N} \le \pi(x) - \pi(v)
\end{equation*}
for all sufficiently large $x$. Assign a distinct prime $p \in (v,x]$ to each $n \in \mathcal{N}$ and choose $a_p \equiv n \bmod p$. Give any unused primes arbitrary residue classes. These choices cover $\{1,\ldots,H\}$, and hence $Y(x) \ge H$, as required.
\end{proof}

\subsection{Averaging over residue classes}
\label{subsec:averaging-residue-classes}

A third proof replaces the successive greedy choices by a single average over all choices of the second-stage residue classes. This is the argument used by Montgomery and Vaughan in \cite[p.~222, proof of Lemma~7.13]{MV2006}. The average number of survivors is exactly the upper bound obtained by the greedy procedure, so all the subsequent estimates remain unchanged.

\begin{lemma}[Averaging over residue classes]
\label{lem:averaging-residue-classes}
Let $\mathcal{S}$ be a finite set of integers and let $\mathcal{P}$ be a finite set of primes. Put
\begin{equation*}
Q := \prod_{p \, \in \, \mathcal{P}} p.
\end{equation*}
Then
\begin{equation*}
\frac{1}{Q}\sum_{b \bmod Q}\#\{n \in \mathcal{S} : (n - b,Q) = 1\} = \#\mathcal{S}\prod_{p \, \in \, \mathcal{P}}\left(1 - \frac{1}{p}\right).
\end{equation*}
Consequently, there are residue classes $a_p \bmod p$, one for each $p \in \mathcal{P}$, such that
\begin{equation*}
\#\{n \in \mathcal{S} : n \not\equiv a_p \bmod p \text{ for every } p \in \mathcal{P}\} \le \#\mathcal{S}\prod_{p \, \in \, \mathcal{P}}\left(1 - \frac{1}{p}\right).
\end{equation*}
\end{lemma}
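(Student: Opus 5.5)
The plan is to prove the identity by exchanging the order of summation and then to deduce the existence statement by an averaging argument. Writing the left-hand side as
\begin{equation*}
\frac{1}{Q}\sum_{n \, \in \, \mathcal{S}}\#\{b \bmod Q : (n - b, Q) = 1\},
\end{equation*}
it suffices to show that, for each fixed integer $n$, the number of classes $b \bmod Q$ with $(n - b, Q) = 1$ equals $\varphi(Q)$.

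For this count, observe that $b \mapsto n - b$ is a bijection of $\mathbb{Z}/Q\mathbb{Z}$. Hence the number of such $b$ is the number of classes modulo $Q$ that are coprime to $Q$. By the Chinese remainder theorem this number is $\prod_{p \in \mathcal{P}}(p - 1)$, since $Q$ is squarefree. Equivalently, the classes $b \bmod Q$ correspond to vectors $(b \bmod p)_{p \in \mathcal{P}}$, and the condition is that $b \not\equiv n \bmod p$ for each $p$. Dividing by $Q$ gives the factor $\prod_{p \in \mathcal{P}}(1 - 1/p)$, and summing over $n \in \mathcal{S}$ gives the identity.

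For the consequence, note that the left-hand side is an average over $Q$ classes. Some class $b$ therefore satisfies
\begin{equation*}
\#\{n \in \mathcal{S} : (n - b, Q) = 1\} \le \#\mathcal{S}\prod_{p \, \in \, \mathcal{P}}\left(1 - \frac{1}{p}\right).
\end{equation*}
Set $a_p :\equiv b \bmod p$ for each $p \in \mathcal{P}$. Then $n \not\equiv a_p \bmod p$ for every $p \in \mathcal{P}$ holds exactly when no $p \in \mathcal{P}$ divides $n - b$, that is, when $(n - b, Q) = 1$. This gives the second assertion.

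I do not expect any step to be a real obstacle. The only points needing care are that $Q$ is squarefree, because the primes in $\mathcal{P}$ are distinct, so that the Chinese remainder theorem correspondence is exact, and that the trivial case $\mathcal{P} = \varnothing$, where $Q = 1$, is consistent with the convention that an empty product equals $1$.
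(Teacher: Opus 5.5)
Your proof is correct and follows essentially the same route as the paper. You interchange the sums, observe that $n - b$ runs over all classes modulo $Q$ so that exactly $\phi(Q)$ values of $b$ qualify, and then choose a class $b$ that is no worse than the average and set $a_p \equiv b \bmod p$.
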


\begin{proof}
For each fixed $n$, the difference $n - b$ runs through all residue classes modulo $Q$ as $b$ does. Exactly $\phi(Q)$ of these classes are coprime to $Q$. Interchanging the order of summation therefore gives
\begin{equation*}
\sum_{b \bmod Q}\#\{n \in \mathcal{S} : (n - b,Q) = 1\} = \sum_{n \, \in \, \mathcal{S}}\#\{b \bmod Q : (n - b,Q) = 1\} = \#\mathcal{S}\phi(Q).
\end{equation*}
Dividing by $Q$ proves the identity. For at least one residue class $b \bmod Q$, the number of $n \in \mathcal{S}$ satisfying $(n - b,Q) = 1$ is no greater than this average. Choosing $a_p \equiv b \bmod p$ for every $p \in \mathcal{P}\,$ proves the assertion. 
\end{proof}

By the Chinese remainder theorem, letting $b$ run through the classes modulo $Q$ runs through every possible tuple of classes modulo the primes in $\mathcal{P}$, exactly once. Thus the common representative $b$ places no restriction on the choices. The lemma selects all the classes together, without requiring any individual choice to be greedy.

\begin{proof}[Third proof of Theorem~\ref{thm:classical-erdos-rankin-bound}]
Fix a constant $0 < \theta < 1/2$. Throughout this proof, asymptotic statements refer to $x \to \infty$, with $\theta$ and the constant $c > 0$ chosen below held fixed.

\proofstep{Choice of parameters}
Put
\begin{align*}
w & := \frac{(\log x)(\log_{3} x)}{(\log_{2} x)^2}, \\
z & := \exp\left(\frac{\theta(\log x)(\log_{3} x)}{\log_{2} x}\right), \\
v & := \frac{x}{2}, \\
H & := \left\lfloor \frac{cx(\log x)(\log_{3} x)}{(\log_{2} x)^2}\right\rfloor,
\end{align*}
where $0 < c \le 1/2$ will be chosen sufficiently small in terms of $\theta$. For sufficiently large $x$,
\begin{equation*}
2 < w < z < v < H, \qquad H \le wv.
\end{equation*}

\proofstep{Clamping stage}
Choose $a_p = 0$ for primes $p \le w$ and $z < p \le v$, and let $\mathcal{N}_{0}$ be the set of integers in $\{1,\ldots,H\}$ left uncovered after these choices. Every prime factor of a member of $\mathcal{N}_{0}$ lies in $(w,z]$ or exceeds $v$. If $n \in \mathcal{N}_{0}$ has a prime factor $q > v$ and is composite, its complementary factor has a prime factor exceeding $w$, giving
\begin{equation*}
n > vw \ge H.
\end{equation*}
Thus every member of $\mathcal{N}_{0}$ is either $z$-smooth or a prime in $(v,H]$, and
\begin{equation*}
\#\mathcal{N}_{0} \le \Psi(H,z) + \pi(H).
\end{equation*}

\proofstep{Averaging stage}
Apply Lemma~\ref{lem:averaging-residue-classes} to $\mathcal{N}_{0}$ and the primes $w < p \le z$. Choose the residue classes supplied by the lemma, and let $\mathcal{N}$ be the survivor set of the completed preliminary sieve. Then
\begin{equation*}
\#\mathcal{N} \le \left(\Psi(H,z) + \pi(H)\right)\prod_{w < p \le z}\left(1 - \frac{1}{p}\right).
\end{equation*}
By Mertens' theorem,
\begin{equation*}
\prod_{w \, < \, p \, \le \, z}\left(1 - \frac{1}{p}\right) \sim \frac{\log w}{\log z} \sim \frac{(\log_{2} x)^2}{\theta(\log x)(\log_{3} x)}.
\end{equation*}

\proofstep{Counting the survivors}
Since $\log H \sim \log x$, the prime contribution satisfies
\begin{equation}
\label{eq:averaging-prime-contribution}
\pi(H)\prod_{w \, < \, p \, \le \, z}\left(1 - \frac{1}{p}\right) \ll \frac{H}{\log H}\frac{\log w}{\log z} \ll \frac{cx}{\log x}.
\end{equation}
The implied constant may depend on $\theta$, but can be taken independent of $c$; the threshold for $x$ may depend on both.

For the smooth contribution, put
\begin{equation*}
u := \frac{\log H}{\log z}.
\end{equation*}
Our choices give
\begin{equation*}
u \sim \frac{\log_{2} x}{\theta\log_{3} x}, \qquad \log u \sim \log_{3} x, \qquad u\log u = \left(\frac{1}{\theta} + o(1)\right)\log_{2} x.
\end{equation*}
The hypotheses of Lemma~\ref{lem:rankin-smooth-numbers} hold for sufficiently large $x$. Since $u = o(\log_{2} x)$, that lemma gives
\begin{equation*}
\Psi(H,z) \ll H(\log z)(\log x)^{-1/\theta + o(1)}.
\end{equation*}
Consequently,
\begin{equation}
\label{eq:averaging-smooth-contribution}
\begin{aligned}
\Psi(H,z)\prod_{w \, < \, p \, \le \, z}\left(1 - \frac{1}{p}\right) & \ll H(\log w)(\log x)^{-1/\theta + o(1)} \\
& \ll \frac{x\log_{3} x}{\log_{2} x}(\log x)^{1 - 1/\theta + o(1)} \\
& = o\left(\frac{x}{\log x}\right),
\end{aligned}
\end{equation}
because $\theta < 1/2$.

\proofstep{Mopping up}
By the prime number theorem, the number of mopping primes satisfies
\begin{equation*}
\pi(x) - \pi(v) \sim \frac{x}{2\log x}.
\end{equation*}
Choose $c$ sufficiently small in terms of $\theta$. Equations~\eqref{eq:averaging-prime-contribution} and \eqref{eq:averaging-smooth-contribution} then give
\begin{equation*}
\#\mathcal{N} \le \pi(x) - \pi(v)
\end{equation*}
for all sufficiently large $x$. Assign a distinct prime $p \in (v,x]$ to each $n \in \mathcal{N}$ and choose $a_p \equiv n \bmod p$. Give any unused primes arbitrary residue classes. These choices cover $\{1,\ldots,H\}$, and hence $Y(x) \ge H$, as required.
\end{proof}

\subsection{A probabilistic framework for the preliminary sieve}
\label{subsec:probabilistic-preliminary-sieve}

The three proofs above follow the same plan. The clamping primes constrain the factorizations of the integers left uncovered, the second stage reduces their number, and the reserve primes complete the covering by mopping up. The clamping and second stages together form the preliminary sieve, using all primes $p \le v$. In each proof, its survivor set $\mathcal{N}$ satisfies
\begin{equation*}
\#\mathcal{N} \le \pi(x) - \pi(v),
\end{equation*}
so that each survivor can be assigned a distinct reserve prime.

This condition is sufficient, but not necessary: a single residue class modulo a reserve prime may contain several survivors. To exploit this, we need both suitable groups of congruent survivors and a way to select their classes without repeatedly covering the same integers. A larger survivor set may therefore admit an efficient covering even when its cardinality alone gives no such assurance. Using more primes in the preliminary sieve leaves fewer in reserve; changing the preliminary residue classes affects both the number and the arrangement of the survivors. The broader objective is to leave a set that can be covered with the reserve primes.

We first place the preliminary choices in a common probabilistic framework. This recovers the averaging argument and its greedy counterpart, while allowing distributions beyond the fixed and uniform choices used above.

\subsubsection*{Independent residue choices}
\label{subsubsec:independent-residue-classes}

Let $\mathcal{S}$ be a finite set of integers and $\mathcal{P}$ a finite set of primes. For each $p \in \mathcal{P}$, let $\mu_p$ be a probability distribution on $\mathbb{Z}/p\mathbb{Z}$, so that
\begin{equation*}
\mu_p(a) \ge 0, \qquad \sum_{a \bmod p}\mu_p(a) = 1.
\end{equation*}
For an integer $n$, we write $\mu_p(n)$ for the mass assigned to its residue class modulo $p$. Choose the classes $a_p \bmod p$ independently according to these distributions. Thus the probability space and its measure are
\begin{equation*}
\Omega := \prod_{p \, \in \, \mathcal{P}}\mathbb{Z}/p\mathbb{Z}, \qquad \mathbb{P} := \bigotimes_{p \, \in \, \mathcal{P}}\mu_p.
\end{equation*}
For a complete choice $\mathbf{a} = (a_p)_{p \, \in \, \mathcal{P}}\,$, put
\begin{equation*}
\mathcal{N}(\mathbf{a}) := \{n \in \mathcal{S} : n \not\equiv a_p \bmod p \text{ for every } p \in \mathcal{P}\}.
\end{equation*}
We write $\mathcal{N}$ when the dependence on $\mathbf{a}$ is understood.

\begin{lemma}[Expected number of remaining integers]
\label{lem:probabilistic-remaining-count}
With the notation above,
\begin{equation}
\label{eq:probabilistic-expected-count}
\mathbb{E}\,\#\mathcal{N} = \sum_{n \, \in \, \mathcal{S}}\prod_{p \, \in \, \mathcal{P}}\left(1 - \mu_p(n)\right).
\end{equation}
In particular, there is a choice of the residue classes for which $\#\mathcal{N}$ is no greater than the right-hand side.
\end{lemma}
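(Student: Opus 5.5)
The plan is to write $\#\mathcal{N}$ as a sum of indicator variables and use linearity of expectation. For each $n \in \mathcal{S}$, let $\mathbf{1}_{\{n \in \mathcal{N}\}}$ be the indicator that $n$ avoids every chosen class. Then
\begin{equation*}
\#\mathcal{N} = \sum_{n \, \in \, \mathcal{S}}\mathbf{1}_{\{n \in \mathcal{N}\}},
\end{equation*}
and since $\mathcal{S}$ is finite,
\begin{equation*}
\mathbb{E}\,\#\mathcal{N} = \sum_{n \, \in \, \mathcal{S}}\mathbb{P}(n \in \mathcal{N}).
\end{equation*}

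Next I compute each probability. Fix $n$. The event $n \in \mathcal{N}$ is the intersection over $p \in \mathcal{P}$ of the events $a_{p} \not\equiv n \bmod p$. The event for $p$ depends only on the coordinate $a_{p}$, and its probability under $\mu_{p}$ is $1 - \mu_{p}(n)$. The measure $\mathbb{P}$ is the product measure on $\Omega$, so the coordinates are independent. The probability of the intersection is therefore the product of these factors, $\prod_{p \in \mathcal{P}}(1 - \mu_{p}(n))$. Substituting gives \eqref{eq:probabilistic-expected-count}. This is the same computation as Lemma~\ref{lem:probabilistic-joint-survival} in the one-element case.

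For the final assertion, I use the fact that $\Omega$ is finite. If every outcome $\mathbf{a}$ had $\#\mathcal{N}(\mathbf{a})$ strictly greater than the right-hand side, then averaging against the probability weights $\mathbb{P}(\mathbf{a})$, which sum to $1$, would give an expectation strictly greater than the right-hand side. That contradicts the identity just proved. Hence some outcome with positive probability satisfies the bound.

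Each step is routine; the only care needed is to make the independence explicit through the product structure of $\mathbb{P}$.
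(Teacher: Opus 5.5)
Your proof is correct and follows the same route as the paper: linearity of expectation over indicators, then the product formula for each $\mathbb{P}(n \in \mathcal{N})$ from independence of the coordinates. Your averaging over the finite space $\Omega$ simply spells out the paper's closing remark that some outcome has cardinality no greater than its expectation.
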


\begin{proof}
For a fixed $n \in \mathcal{S}$, the probability that $n \not\equiv a_p \bmod p$ is $1 - \mu_p(n)$. Independence of the choices for different primes gives
\begin{equation*}
\mathbb{P}(n \in \mathcal{N}) = \prod_{p \, \in \, \mathcal{P}}\left(1 - \mu_p(n)\right).
\end{equation*}
Summing over $n \in \mathcal{S}$ proves the identity. Some outcome has cardinality no greater than its expectation.
\end{proof}

For the covering construction, take $\mathcal{S} = \{1,\ldots,H\}$ and $\mathcal{P} = \{p : p \le v\}$. The primes in $(v,x]$ remain unassigned. If the expectation in \eqref{eq:probabilistic-expected-count} is at most $\pi(x) - \pi(v)$, the lemma supplies a realization that can be completed by individual mopping.

\subsubsection*{The fixed, uniform and greedy choices}
\label{subsubsec:fixed-uniform-and-greedy-choices}

A deterministic choice is represented by a distribution concentrated on one class. In particular, clamping a preliminary prime $p$ means taking $\mu_p(0) = 1$; in the classical construction we do this for $p \le w$ and $z < p \le v$. Every integer removed by these choices contributes zero to \eqref{eq:probabilistic-expected-count}. The remaining sum is over the set $\mathcal{N}_{0}$ left after clamping.

If the second-stage distributions are uniform, so that $\mu_p(a) = 1/p$ for $w < p \le z$, then
\begin{equation*}
\mathbb{E}\,\#\mathcal{N} = \#\mathcal{N}_{0}\prod_{w \, < \, p \, \le \, z}\left(1 - \frac{1}{p}\right).
\end{equation*}
This is the Montgomery--Vaughan averaging argument \cite[p.~222]{MV2006}, expressed through independent residue choices.

Greedy selection gives a deterministic way to attain this bound. Suppose that some second-stage classes have been fixed, leaving a set $\mathcal{T}$, and let $\mathcal{R}$ be the set of second-stage primes still unassigned. Keep the choices at these primes uniform and independent. For $p \in \mathcal{R}$, fixing $a_p = a$ makes the conditional expected final count
\begin{equation*}
\left(\#\mathcal{T} - \#\{n \in \mathcal{T} : n \equiv a \bmod p\}\right)\prod_{q \, \in \, \mathcal{R}\setminus\{p\}}\left(1 - \frac{1}{q}\right).
\end{equation*}
The product is independent of $a$. Choosing a class containing as many members of $\mathcal{T}$ as possible therefore minimizes this conditional expectation. At least one choice gives a value no greater than the average over $a$. Repeating the procedure fixes all the classes without increasing the conditional expectation, and the final count is no greater than the original expectation. Thus the greedy proof is a derandomization of the uniform model by successive conditional expectations.

The Erd\H{o}s-inspired proof instead takes $\mu_p(-1) = 1$ throughout the second range. In this case every choice is deterministic, and the right-hand side of \eqref{eq:probabilistic-expected-count} is the actual number of remaining integers. Lemma~\ref{lem:fixed-prime-survivors} supplies the arithmetic estimate needed to bound it. The probabilistic notation accommodates this proof, but does not replace its sieve input.

\subsubsection*{Beyond fixed and uniform choices}
\label{subsubsec:beyond-fixed-and-uniform-choices}

The classical constructions assign sharply separated roles to the primes. Some invariably select zero, while the averaging argument selects uniformly in the second range. What if the transition were more gradual? We could favor zero without selecting it invariably, and allow the strength of this preference to depend on the prime.

Such a choice involves a tradeoff. Selecting zero covers every multiple of the prime and no nonmultiple. Increasing the weight assigned to zero therefore makes each multiple less likely to survive, while making each nonmultiple more likely to survive, since the nonzero classes share the remaining weight equally. Decreasing the weight assigned to zero reverses these effects. The resulting set need not be smaller. The question is whether replacing some fixed zero classes by biased random choices can leave a set for which efficient final covering is easier to establish.

Consider the family
\begin{equation*}
\mu_p(0) = 1 - \beta_p, \qquad \mu_p(a) = \frac{\beta_p}{p - 1}\quad(a \ne 0), \qquad 0 \le \beta_p \le 1.
\end{equation*}
Here $\beta_p = 0$ gives clamping, $\beta_p = (p - 1)/p$ gives uniform selection among all classes, and $\beta_p = 1$ gives uniform selection among the nonzero classes. Values between $0$ and $(p - 1)/p$ favor zero relative to the uniform distribution.

For this family, the local survival probabilities are
\begin{equation*}
\mathbb{P}(n \not\equiv a_p \bmod p) = \begin{cases} \beta_p, & p \mid n, \\ 1 - \beta_p/(p - 1), & p \nmid n. \end{cases}
\end{equation*}
When its denominator is nonzero, the ratio
\begin{equation*}
\frac{\beta_p}{1 - \beta_p/(p - 1)}
\end{equation*}
therefore measures the relative survival of multiples and nonmultiples of $p$. Rather than requiring every multiple to disappear, we may prescribe how strongly its survival is suppressed.

\subsubsection*{A multiplicative survival law}
\label{subsubsec:a-multiplicative-survival-law}

A particularly useful prescription is to make the relative survival probability equal to $p^{-\tau}$, where $\tau > 0$. Solving
\begin{equation*}
\frac{\beta_p}{1 - \beta_p/(p - 1)} = p^{-\tau}
\end{equation*}
gives
\begin{equation*}
\beta_p = \frac{(p - 1)p^{-\tau}}{p - 1 + p^{-\tau}}.
\end{equation*}
This is the tilted residue law used in \cite[Section~3]{GPT2026}. Its form ensures that each prime divisor contributes a simple multiplicative factor to the survival probability.

\begin{lemma}[Multiplicative survival law]
\label{lem:app-probabilistic-multiplicative-survival}
Let $\mathcal{P}$ be a finite set of primes, and use the tilted distributions above independently for $p \in \mathcal{P}$. Put
\begin{equation*}
B_p := \frac{p - 1}{p - 1 + p^{-\tau}}, \qquad A := \prod_{p \, \in \, \mathcal{P}} B_p.
\end{equation*}
For every positive integer $n$,
\begin{equation*}
\mathbb{P}(n \not\equiv a_p \bmod p \text{ for every } p \in \mathcal{P}) = A\prod_{\substack{p \, \in \, \mathcal{P} \\ p \mid n}}p^{-\tau}.
\end{equation*}
In particular, if $n$ is squarefree and all its prime factors belong to $\mathcal{P}$, this probability is $An^{-\tau}$.
\end{lemma}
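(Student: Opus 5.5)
The plan is a direct computation from independence, following the proof of Lemma~\ref{lem:probabilistic-multiplicative-survival} in the main text. Fix a positive integer $n$. Since the classes $a_p$ for $p \in \mathcal{P}$ are chosen independently, the probability that $n$ avoids every chosen class is the product over $p \in \mathcal{P}$ of the local probabilities $\mathbb{P}(n \not\equiv a_p \bmod p)$, exactly as in Lemma~\ref{lem:probabilistic-remaining-count}. So it suffices to compute each local factor and show that it equals $B_p$ when $p \nmid n$ and $B_p p^{-\tau}$ when $p \mid n$.

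For the local factors, I would use the case formula displayed just before the lemma. It gives survival probability $\beta_p$ if $p \mid n$, and $1 - \beta_p/(p-1)$ if $p \nmid n$. Substituting $\beta_p = (p-1)p^{-\tau}/(p-1+p^{-\tau})$ gives
\begin{equation*}
1 - \frac{\beta_p}{p-1} = 1 - \frac{p^{-\tau}}{p - 1 + p^{-\tau}} = \frac{p-1}{p-1+p^{-\tau}} = B_p,
\end{equation*}
and directly $\beta_p = B_p p^{-\tau}$. This is just the defining relation $\beta_p/(1-\beta_p/(p-1)) = p^{-\tau}$ solved above. I would also check that these are genuine probabilities. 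Since $0 < \beta_p < 1$ for $\tau > 0$ and $p \ge 2$, the distribution $\mu_p$ is well defined. The denominator $p-1+p^{-\tau}$ is positive, including at $p=2$, where $\mu_2(1) = \beta_2$.

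Multiplying the local factors over $p \in \mathcal{P}$ gives $\prod_{p} B_p \cdot \prod_{p \in \mathcal{P},\, p \mid n} p^{-\tau} = A\prod_{p\in\mathcal{P},\,p\mid n}p^{-\tau}$. Each prime divisor enters once regardless of its multiplicity, since the local event depends only on whether $p \mid n$. For the final assertion, suppose $n$ is squarefree with all prime factors in $\mathcal{P}$. Then the set $\{p \in \mathcal{P} : p \mid n\}$ is exactly the set of prime factors of $n$, each with multiplicity one, so the product is $\prod_{p \mid n} p^{-\tau} = n^{-\tau}$.

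There is no real obstacle here. The only points needing care are the algebraic verification that $1 - \beta_p/(p-1) = B_p$ and remembering that independence is what justifies the product.
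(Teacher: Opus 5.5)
Your proposal is correct and follows the paper's proof essentially step for step: you show that the local survival factor is $B_p$ when $p \nmid n$ and $\beta_p = B_p p^{-\tau}$ when $p \mid n$, multiply over $p \in \mathcal{P}$ using independence, and read off $n^{-\tau}$ in the squarefree case. Your extra check that $0 < \beta_p < 1$, so that each $\mu_p$ is a genuine probability distribution, is a harmless addition that the paper leaves implicit.
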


\begin{proof}
If $p \nmid n$, the local survival probability is
\begin{equation*}
1 - \frac{\beta_p}{p - 1} = 1 - \frac{p^{-\tau}}{p - 1 + p^{-\tau}} = B_p.
\end{equation*}
If $p \mid n$, it is $\beta_p = B_p p^{-\tau}$. Thus every prime contributes a factor $B_p$, with an additional factor $p^{-\tau}$ when it divides $n$. By independence,
\begin{equation*}
\mathbb{P}(n \not\equiv a_p \bmod p \text{ for every } p \in \mathcal{P})
= \prod_{\substack{p \, \in \, \mathcal{P} \\ p \nmid n}} B_p \prod_{\substack{p \, \in \, \mathcal{P} \\ p \mid n}}(B_p p^{-\tau}) \\
= A\prod_{\substack{p \, \in \, \mathcal{P} \\ p \mid n}}p^{-\tau}.
\end{equation*}
If $n$ is squarefree and all its prime factors belong to $\mathcal{P}$, the final product equals $n^{-\tau}$.
\end{proof}

The squarefree hypothesis in the last assertion is essential: the residue condition detects whether $p$ divides $n$, but not its multiplicity. Additional deterministic clamping can be imposed at primes outside $\mathcal{P}\,$; the displayed law then applies to integers that pass those fixed choices.

For fixed $\tau > 0$, the factor $p^{-\tau}$ imposes a stronger relative penalty at larger primes. At $\tau = 0$, the same formulas give uniform selection, while for each fixed prime the distribution approaches clamping as $\tau$ tends to infinity. The parameter $\tau$ may itself depend on the scale of the construction.

The benefit is more than an interpolation between familiar choices. A local preference for zero has produced an explicit weight on the factorizations of the remaining integers. In the relevant squarefree class, integers of comparable size have comparable survival probabilities regardless of how their prime factors are distributed. This gives a tractable starting point for counting the remaining composites. Covering survivors in groups, however, also requires information about their joint survival probabilities.

\subsubsection*{Groups of remaining integers}
\label{subsubsec:groups-of-remaining-integers}

The product model gives an equally direct formula for the probability that several integers all remain. For a finite set $\mathcal{T}$, write $\mathcal{T}\bmod p$ for the set of distinct residue classes represented by its members.

\begin{lemma}[Joint survival]
\label{lem:app-probabilistic-joint-survival}
Under the independent residue distributions $\mu_p$, for every $\mathcal{T} \subseteq \mathcal{S}$,
\begin{equation*}
\mathbb{P}(\mathcal{T} \subseteq \mathcal{N}) = \prod_{p \, \in \, \mathcal{P}}\left(1 - \sum_{a \, \in \, \mathcal{T}\bmod p}\mu_p(a)\right).
\end{equation*}
\end{lemma}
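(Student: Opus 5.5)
The identity follows from independence across primes, after reducing the event at a single prime to a disjoint union over residue classes. I would argue directly in the product space $\Omega = \prod_{p \in \mathcal{P}}\mathbb{Z}/p\mathbb{Z}$ with measure $\mathbb{P} = \bigotimes_{p}\mu_p$, exactly as in the proof of Lemma~\ref{lem:probabilistic-joint-survival}.

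First, fix a prime $p \in \mathcal{P}$ and describe the local event. Every member of $\mathcal{T}$ avoids the class $a_p$ if and only if the residue of $a_p$ modulo $p$ does not lie in the set $\mathcal{T}\bmod p$. Since $\mathcal{T}\bmod p$ is a set of distinct classes, the event $\{a_p \in \mathcal{T}\bmod p\}$ is a disjoint union of the events $\{a_p \equiv a\}$ over $a \in \mathcal{T}\bmod p$. Its probability is therefore $\sum_{a \in \mathcal{T}\bmod p}\mu_p(a)$, and the local survival event has probability $1$ minus this sum. Here each class is counted once, even if several members of $\mathcal{T}$ share it.

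Second, observe that $\{\mathcal{T} \subseteq \mathcal{N}\}$ is the intersection over $p \in \mathcal{P}$ of these local events. Each local event depends only on the coordinate $a_p$, so under the product measure they are independent. The probability of the intersection is then the product of the local probabilities, which is the asserted formula.

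The only point needing care is the first step. One must avoid double counting when several members of $\mathcal{T}$ are congruent modulo $p$, which is why the sum runs over the set of classes rather than over the members of $\mathcal{T}$. When $\mathcal{T}$ is empty, every local factor equals $1$ and both sides equal $1$. No further obstacle arises.
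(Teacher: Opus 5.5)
Your proof is correct and matches the paper's: it too identifies the local event at $p$ as the chosen class lying outside $\mathcal{T}\bmod p$, with probability $1 - \sum_{a \in \mathcal{T}\bmod p}\mu_p(a)$, and then takes the product using independence across primes. Your remarks on counting each class once and on the empty set spell out points the paper leaves implicit.
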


\begin{proof}
Every member of $\mathcal{T}$ escapes the selected class modulo $p$ precisely when that class lies outside $\mathcal{T}\bmod p$. This has probability $1 - \sum_{a \, \in \, \mathcal{T}\bmod p}\mu_p(a)$. The choices for different primes are independent.
\end{proof}

Under uniform selection, this becomes
\begin{equation*}
\mathbb{P}(\mathcal{T} \subseteq \mathcal{N}) = \prod_{p \, \in \, \mathcal{P}}\left(1 - \frac{\#(\mathcal{T}\bmod p)}{p}\right).
\end{equation*}
Thus independence of the residue choices does not mean independence of the survival events for different integers. Their joint survival depends on the residue classes they occupy together.

For a reserve prime $q$, a set $\mathcal{T}$ whose members are all congruent modulo $q$ can be covered by one class if it survives the preliminary sieve. The joint-survival formula allows us to estimate how often such candidate groups remain. A further argument must then show that sufficiently many groups can be selected, with at most one class per reserve prime and with controlled overlap. This is where the arrangement of the survivors enters the covering problem, beyond their number alone.

The preliminary sieve should therefore be judged by whether its remaining set can be covered with the reserve primes. A small cardinality guarantees this in the classical argument, but more efficient covering requires information about the arrangement of the survivors. The tilted distributions provide explicit survival laws with which to study that arrangement; a further covering argument is needed to turn this information into an improvement.

\clearpage

\section{Historical development of the Erdős--Rankin sieve}
\label{app:history}

Let $p_n$ be the $n$th term in the sequence of all prime numbers:
\begin{equation*}
p_{1} = 2 < p_{2} = 3 < p_{3} = 5 < \cdots.
\end{equation*}
How large can a gap $p_{n + 1} - p_{n}$ between consecutive primes be?

\subsection{From factorials to residue-class coverings}
\label{subsec:factorials}
One answer is: arbitrarily large, as can be seen from the run of composite integers
\begin{equation*}
m! + 2,\, m! + 3,\, \ldots,\, m! + m.
\end{equation*}
But this does not yet put the size of the gap in perspective. The prime number theorem states that the number of primes less than or equal to $X$, denoted $\pi(X)$, satisfies
\begin{equation*}
\pi(X) \sim \frac{X}{\log X}.
\end{equation*}
Thus the average spacing between primes up to $X$ is asymptotic to $\log X$. In particular, there are gaps below $X$ of size at least
\begin{equation*}
(1 - o(1))\log X.
\end{equation*}

This already beats the simple factorial construction by a logarithmic factor. Indeed, the construction above gives a gap of length about $m$ at height about $m!$, whereas the prime number theorem guarantees a gap below $m!$ of size at least
\begin{equation*}
(1 - o(1))\log(m!) \sim m\log m,
\end{equation*}
by Stirling's formula.

The factorial construction is wasteful: to make $M + k$ composite, it is enough that $M$ contain one prime divisor of $k$. Accordingly, let
\begin{equation*}
P_m := p_{1} p_{2} \cdots p_{m}.
\end{equation*}
If $2 \le k \le p_{m}$, then $k$ has a prime divisor among $p_{1},\ldots,p_{m}$, and hence
\begin{equation*}
P_{m} + 2,\, P_{m} + 3,\, \ldots,\, P_{m} + p_{m}
\end{equation*}
are all composite. The construction therefore produces a prime gap of length at least $p_{m}$. By the prime number theorem in Chebyshev's form,
\begin{equation*}
\log P_{m} = \sum_{p \, \le \, p_{m}}\log p \sim p_{m}.
\end{equation*}
Hence the primorial construction produces gaps on precisely the scale supplied by the prime number theorem. 

It is not, however, the most efficient way to use the small primes. In the construction above we have made the same choice
\begin{equation*}
M \equiv 0 \bmod p
\end{equation*}
for every prime $p \le p_{m}$. By choosing the residue class modulo each prime separately, one can do better. The following construction goes back to Legendre \cite{LEG1830}. Let $m \ge 3$. By the Chinese remainder theorem, choose
\begin{equation*}
P_{m} \le M< 2P_{m}
\end{equation*}
such that
\begin{alignat*}{3}
M & \equiv -p_{m - 1} &\quad& \bmod p &\qquad& (p < p_{m - 1}), \\
M & \equiv -(p_{m - 1} - 1) &\quad& \bmod p_{m - 1}, \\
M & \equiv -(p_{m - 1} + 1) &\quad& \bmod p_m.
\end{alignat*}

The last two congruences show that $M + p_{m - 1} - 1$ and $M + p_{m - 1} + 1$ are composite, while the first shows that $M + p_{m - 1}$ is divisible by every prime $p < p_{m - 1}$. For every other $k \in [1, 2p_{m - 1} - 1]$, we have
\begin{equation*}
2 \le |k - p_{m - 1}| \le p_{m - 1} - 1.
\end{equation*}
Hence any prime divisor $p$ of $|k - p_{m - 1}|$ satisfies $p < p_{m - 1}$, and
\begin{equation*}
M + k \equiv -p_{m - 1} + k \equiv 0 \bmod p.
\end{equation*}
Thus
\begin{equation*}
M + 1,\, M + 2,\, \ldots,\, M + 2p_{m - 1} - 1
\end{equation*}
is a run of $2p_{m - 1} - 1$ consecutive composite integers, lying at height comparable to $P_{m}$.

Legendre went further and asserted that this construction was essentially optimal \cite[p.~76, no.~410]{LEG1830}. This assertion formed part of a more general claim about arithmetic progressions avoiding prescribed prime divisors, on which Legendre based an attempted proof of the infinitude of primes in every reduced arithmetic progression. Dirichlet \cite{DIR1837} identified a gap in the argument and proved the latter theorem by different means in 1837. The validity of Legendre’s claim became the subject of the Académie des Sciences’ grand prize in mathematics for 1858; Dupré’s counterexamples appeared in 1859 \cite{DUP1859}. Further counterexamples were given independently by Moreau and Piltz; see Ricci \cite[pp.~190--191]{RIC1934} for this early history.

Curiously, Legendre had himself conjectured the asymptotic formula \cite[p.~65, \S VIII, no.~394]{LEG1830}
\begin{equation*}
\pi(X) \sim \frac{X}{\log X - 1.08366},
\end{equation*}
and hence, in particular, the first-order asymptotic $\pi(X) \sim X/\log X$.
Had this conjecture been available as a theorem, his construction would already have implied the existence of prime gaps asymptotically almost twice the average size. This conclusion was first obtained unconditionally by Backlund \cite{BAC1929}, who used Legendre's construction together with the prime number theorem to show that, for every $\epsilon > 0$, there are infinitely many $n$ for which
\begin{equation*}
p_{n + 1} - p_n > (2 - \epsilon)\log p_n.
\end{equation*}
Backlund's result appears to mark the beginning of the modern study of large prime gaps: it was the first known result showing that infinitely many gaps exceed the average spacing at their scale by a fixed factor greater than one.

In 1930, Brauer and Zeitz \cite{BZ1930, ZEI1930} strengthened these earlier counterexamples by proving that, for every $p_m \ge 43$, there are at least
\begin{equation*}
2p_{m - 1} + 1
\end{equation*}
consecutive integers, each divisible by at least one of the primes $p_{1},\ldots,p_{m}$. Thus Legendre's proposed maximum $2p_{m - 1} - 1$ is too small by at least two.

Their argument already contains an idea that became characteristic of later constructions: some primes are set aside to deal with the small exceptional set left by the main construction. In modernized notation, choose an integer $A < p_{m}$, and first arrange that an odd integer $M$ is divisible by every odd prime $p \le A$. Then, for $1 \le a \le A$,
\begin{equation*}
M \pm 2a \equiv 0 \bmod p
\end{equation*}
whenever $a$ has an odd prime divisor $p$. The only integers not thereby forced to be divisible by one of these primes are those for which $a$ is a power of $2$:
\begin{equation*}
M \pm 2,\, M \pm 2^{2},\, \ldots,\, M \pm 2^{k}.
\end{equation*}
There are only $2k = O(\log p_{m})$ such exceptional integers. Brauer and Zeitz reserve sufficiently many primes near the top of the range $p \le p_{m}$ and assign one of them to each exception, choosing the corresponding residue class so that the exceptional integer is divisible by that prime. The Chinese remainder theorem then combines these choices.

For every $\eta > 0$ and all sufficiently large $m$, they thereby obtain
\begin{equation*}
\left\lfloor (2 - \eta)p_m \right\rfloor
\end{equation*}
consecutive odd integers, each having a common prime factor with $P_m$. The even integers between them are divisible by $2$, so this gives a run of
\begin{equation*}
2\left\lfloor (2 - \eta)p_m \right\rfloor - 1
\end{equation*}
consecutive composite integers. It follows, on taking $\eta$ sufficiently small, that Backlund's $2 - \epsilon$ may be replaced by $4 - \epsilon$.

\subsection{The covering problem and the transfer to prime gaps}
\label{subsec:covering-problem}
At this point it is useful to give names to the two quantities that have been implicit in the discussion. Let
\begin{equation*}
G(X) := \max_{p_{n + 1} \, \le \, X} \, (p_{n + 1} - p_{n}),
\end{equation*}
and let $Y(x)$ be the largest integer $y$ for which one can choose a residue class $a_p \bmod p$ for every prime $p \le x$ such that
\begin{equation*}
\{1,\ldots,y\} \subseteq \bigcup_{p \, \le \, x} (a_p + p\mathbb{Z}).
\end{equation*}
Such a choice is a \emph{covering} of the interval. 

In this notation, Legendre's construction gives $Y(p_m) \ge 2p_{m - 1} - 1$, while his asserted extremality would give equality. Brauer and Zeitz proved instead that $Y(p_{m}) \ge 2p_{m - 1} + 1$ for every $m \ge 14$, and more strongly that $Y(p_{m}) \ge (4 - \epsilon)p_{m}$ for all sufficiently large $m$.

The connection between the two functions is supplied by the Chinese remainder theorem. A choice of residue classes as in the definition of $Y(x)$ may be realized simultaneously by choosing an integer $M$ such that, for each $1 \le k \le Y(x)$, the integer $M + k$ is divisible by some prime $p \le x$. Taking $M > x$ makes every integer in this run composite. By the prime number theorem in Chebyshev's form,
\begin{equation*}
\log \left(\prod_{p \, \le \, x} p\right) \sim x.
\end{equation*}
Thus a lower bound for $Y(x)$ at the scale $x$ gives a corresponding lower bound for $G(X)$ after the substitution $x \sim \log X$. We shall make this correspondence, including the relation between bounds for $G(X)$ and the older formulations in terms of infinitely many gaps $p_{n + 1} - p_n$, precise below.

\subsection{Westzynthius and the basic sieve architecture}
\label{subsec:basic-architecture}
Westzynthius \cite{WES1931} then made the decisive qualitative advance. He showed that the constant $4$ in the Brauer--Zeitz bound could be replaced by an arbitrarily large constant. More precisely, for every $\epsilon > 0$ and all sufficiently large $m$,
\begin{equation*}
Y(p_m) \ge (2 - \epsilon)e^{\gamma} \frac{p_m \log_{2} p_m}{\log_{3} p_m},
\end{equation*}
where $\log_{j} = \log(\log_{j - 1})$ denotes the $j$-fold iterated logarithm and $\gamma$ is the Euler--Mascheroni constant, with
\begin{equation*}
\gamma = 0.577215\ldots, \qquad e^\gamma = 1.781072\ldots.
\end{equation*}
The Chinese remainder theorem translates such a choice of residue classes into a run of consecutive composite integers at a height whose logarithm is asymptotic to the prime cutoff. We make this correspondence, including its formulation in terms of the record-gap function $G(X)$ and the relevant quantifiers, precise below. Consequently, for infinitely many $n$,
\begin{equation*}
p_{n + 1} - p_n > (2 - \epsilon)e^{\gamma} \frac{\log p_n \log_{3} p_n}{\log_{4} p_n}.
\end{equation*}

Westzynthius's proof combines restrictions on factorization with successive choices of residue classes. He assigns zero classes to a middle range of primes, leaving only offsets composed of smaller primes and offsets with a larger prime factor. On the latter set he uses the smallest primes in turn, choosing at each step a residue class containing as many of the remaining offsets as possible \cite[pp.~20--21]{WES1931}. Such a choice is called \emph{greedy}: it maximizes the number covered at the current step. Since there are $p$ residue classes modulo $p$, this removes at least a proportion $1/p$ of the offsets then remaining. The largest available primes are reserved to cover the remaining offsets individually. An elementary count bounds the offsets composed entirely of small primes; a sharper version of that count in the final section gives the constant $2e^{\gamma}$ \cite[pp.~33--37]{WES1931}.

The following modern presentation, due to Montgomery and Vaughan \cite[pp.~221--222]{MV2006}, gives a particularly simple proof of the qualitative conclusion. It assigns zero classes also to the smallest primes, so that any integer left uncovered by the zero classes with a large prime factor must itself be prime.

Fix a large constant $L$ and put
\begin{equation*}
H := \left\lfloor \frac{xL}{3} \right\rfloor.
\end{equation*}
Split the primes $p \le x$ into the four ranges
\begin{equation*}
p \le L, \qquad L < p \le L^{L}, \qquad L^{L} < p \le \frac{x}{3}, \qquad \frac{x}{3} < p \le x.
\end{equation*}
To prove a lower bound for $Y(x)$, we seek one residue class $a_p \bmod p$ for each prime $p \le x$ whose union contains $\{1,\ldots,H\}$. An integer is \emph{covered} by a chosen class if it belongs to that class, and is \emph{uncovered} after a stage if none of the classes chosen so far covers it. We use the primes up to $v = x/3$ for the \emph{preliminary sieve}, holding those in $(v,x]$ in reserve until that sieve is complete. We call these the \emph{preliminary primes} and the \emph{reserve primes}, respectively; the classes chosen for the preliminary primes are the \emph{preliminary classes}.

We begin by prescribing the zero class at every prime in the first and third ranges. Prescribing this class deterministically at a preliminary prime is called \emph{clamping}; the prime and its prescribed class are called a \emph{clamping prime} and a \emph{clamping class}.

For the primes in the first and third ranges, take
\begin{equation*}
a_p = 0,
\end{equation*}
and let $\mathcal{N}_{0}$ denote the set of integers $1 \le n \le H$ left uncovered after these choices. Suppose that $n \in \mathcal{N}_{0}$ has a prime divisor $q > x/3$. Since $n$ has no prime divisor at most $L$, either $n = q$, or
\begin{equation*}
n > qL > \frac{xL}{3} \ge H,
\end{equation*}
which is impossible. Hence any member of $\mathcal{N}_{0}$ having a prime factor greater than $x/3$ is itself such a prime, while every other member has all its prime factors in $(L,L^{L}]$.

Writing $\Psi(x,y)$ for the number of positive integers at most $x$ all of whose prime factors are at most $y$, we therefore have
\begin{equation*}
\#\mathcal{N}_{0} \le \Psi(H,L^{L}) + \pi(H) - \pi(x/3).
\end{equation*}
Since $L$ is fixed, an $L^{L}$-smooth integer is determined by finitely many prime exponents, each $O(\log H)$, so $\Psi(H,L^{L})$ grows at most like a fixed power of $\log H$. In particular, its contribution is negligible compared with the prime term:
\begin{equation*}
\Psi(H,L^{L}) = o\left(\frac{H}{\log H}\right).
\end{equation*}
By the prime number theorem,
\begin{equation*}
\#\mathcal{N}_{0} \le (1 + o(1))\frac{x(L - 1)}{3\log x}.
\end{equation*}

For the primes $L < p \le L^{L}$, choose the residue classes independently and uniformly at random. For any fixed $n \in \mathcal{N}_{0}$, the probability of surviving all these choices is
\begin{equation*}
\prod_{L < p \, \le \, L^{L}} \left(1 - \frac{1}{p}\right).
\end{equation*}
Hence, by linearity of expectation, there is a deterministic choice of residue classes leaving at most this proportion of $\mathcal{N}_{0}$. The greedy procedure gives the same upper bound: at each prime $p$, at most a proportion $1 - 1/p$ of the current set remains. 

These choices complete the preliminary sieve. We call the integers left uncovered its \emph{survivors}; they retain this designation even if covered later.

By Mertens' theorem,
\begin{equation*}
\prod_{L < p \, \le \, L^{L}} \left(1 - \frac{1}{p}\right) \sim \frac{1}{L}
\end{equation*}
as $L \to \infty$. Thus, on taking $L$ sufficiently large and then $x$ sufficiently large, the number of survivors of the completed preliminary sieve is less than
\begin{equation*}
\frac{x}{2\log x}.
\end{equation*}

The primes in the fourth range, $x/3 < p \le x$, have meanwhile been left in reserve, just as in the Brauer--Zeitz construction. By the prime number theorem there are asymptotically
\begin{equation*}
\frac{2x}{3\log x}
\end{equation*}
such primes. There are therefore enough reserve primes to assign a distinct one to each survivor and choose its residue class to contain that integer. We call this operation \emph{mopping up}. Thus
\begin{equation*}
Y(x) \ge H = \left\lfloor \frac{xL}{3} \right\rfloor.
\end{equation*}
Since $L$ may be taken arbitrarily large,
\begin{equation*}
\frac{Y(x)}{x} \to \infty.
\end{equation*}

In this modern form, two features of this argument recur through the Erdős--Rankin constructions and their later refinements. The smallest primes are assigned the zero residue class, and the interval length is chosen so that a survivor having a prime factor in the range of reserve primes must itself be prime; the reserve primes can then be used individually on the remaining integers. At the intermediate stage, averaging over residue classes guarantees a good deterministic choice, which may equivalently be realized by a greedy selection.

We can now make precise the correspondence between the covering function $Y(x)$, the record-gap function $G(X)$, and the older formulations in terms of individual prime gaps. Put
\begin{equation*}
P(x) := \prod_{p \, \le \, x} p.
\end{equation*}
Let $y = Y(x)$, and choose residue classes $a_p \bmod p$, for $p \le x$, whose union contains $\{1,\ldots,y\}$. By the Chinese remainder theorem there is an integer $M$ with
\begin{equation*}
x < M \le x + P(x), \qquad M \equiv -a_p \bmod p \quad (p \le x).
\end{equation*}
For each $1 \le k \le y$, some prime $p \le x$ satisfies $k \equiv a_p \bmod p$, and hence
\begin{equation*}
M + k \equiv 0 \bmod p.
\end{equation*}
Since $M > x$, the integers $M + 1,\ldots,M + y$ are therefore all composite.

Let $q$ be the first prime greater than $M + y$, and let $q'$ be the preceding prime. Then $q' \le M$, so
\begin{equation*}
q - q' > y.
\end{equation*}
By Bertrand's postulate,
\begin{equation*}
q < 2(M + y) \le 2\left(x + P(x) + Y(x)\right).
\end{equation*}
With our convention that $G(X)$ counts gaps whose upper prime is at most $X$, this gives
\begin{equation*}
G\left(2\left(x + P(x) + Y(x)\right)\right) > Y(x).
\end{equation*}

We shall also use the elementary bound
\begin{equation*}
Y(x) < P(x).
\end{equation*}
Indeed, consider any choice of residue classes $a_p \bmod p$, one for each prime $p \le x$. For each $p$, choose a residue class $b_p \bmod p$ distinct from $a_p \bmod p$. By the Chinese remainder theorem there is an integer $b$, with $1 \le b \le P(x)$, satisfying
\begin{equation*}
b \equiv b_p \bmod p \qquad (p \le x).
\end{equation*}
This integer belongs to none of the chosen residue classes. Hence no choice of the $a_p$ can contain all of $\{1,\ldots,P(x)\}$.

It follows that
\begin{equation*}
\log\left(2\left(P(x) + x + Y(x)\right)\right) \sim \log P(x) \sim x,
\end{equation*}
where the second asymptotic follows from the prime number theorem in Chebyshev's form. Thus the natural height of the gap produced at scale $x$ has logarithm asymptotic to $x$. To obtain a bound for $G(X)$ for every sufficiently large $X$, rather than only at these particular heights, we interpolate between consecutive prime values of $x$.

There is a small point here concerning the quantifiers. Apply the preceding construction with $x = p_m$, and put
\begin{equation*}
X_m := 2\left(p_m + P(p_m) + Y(p_m)\right).
\end{equation*}
Then
\begin{equation*}
\log X_m \sim p_m.
\end{equation*}
By the prime number theorem, $p_{m + 1} \sim p_m$. Since $\log X_m \sim p_m$ and $\log X_{m + 1} \sim p_{m + 1}$, it follows that, whenever $X_m \le X < X_{m + 1}$, 
\begin{equation*}
\log X \sim p_m.
\end{equation*}
Since $G(X)$ is nondecreasing,
\begin{equation*}
G(X) \ge G(X_m) > Y(p_m).
\end{equation*}
Consequently,
\begin{equation*}
\frac{Y(x)}{x} \to \infty \qquad \implies \qquad \frac{G(X)}{\log X} \to \infty.
\end{equation*}

The older formulation in terms of individual gaps follows from the same construction. This time choose the representative of the Chinese-remainder class so that
\begin{equation*}
P(x) \le M < 2P(x).
\end{equation*}
For sufficiently large $x$ we again have $M > x$, so $M + 1,\ldots,M + Y(x)$ are composite. If $p_n$ is the largest prime not exceeding $M$ and $p_{n + 1}$ is the first prime greater than $M + Y(x)$, then
\begin{equation*}
p_{n + 1} - p_n > Y(x).
\end{equation*}
By Bertrand's postulate and $Y(x) < P(x)$,
\begin{equation*}
\frac{P(x)}{2} < p_n < p_{n + 1} < 6P(x),
\end{equation*}
and hence
\begin{equation*}
\log p_n \sim x.
\end{equation*}
As $x \to \infty$ the resulting gaps become arbitrarily large, so infinitely many distinct gaps occur.

For example, Westzynthius's estimate in its $Y(x)$-form
\begin{equation*}
Y(x) \ge (2 - \epsilon)e^{\gamma} \frac{x\log_{2} x}{\log_{3} x}
\end{equation*}
therefore yields, after an arbitrarily small adjustment of $\epsilon$,
\begin{equation*}
p_{n + 1} - p_n > (2 - \epsilon)e^{\gamma} \frac{\log p_n \log_{3} p_n}{\log_{4} p_n}
\end{equation*}
for infinitely many $n$. The same interpolation at the primorial scales gives the corresponding uniform bound
\begin{equation*}
G(X) \ge (2 - \epsilon)e^{\gamma} \frac{\log X \log_{3} X}{\log_{4} X}
\end{equation*}
for all sufficiently large $X$.

\subsection{The Erdős--Rankin scale and smooth numbers}
\label{subsec:erdos-rankin-scale}
Ricci \cite{RIC1934} next combined Westzynthius's construction with estimates from Brun's sieve in a broader study of polynomial values. He described the sieve method as that of Brun and Rademacher; the estimate used in his argument is quoted from his own preceding memoir \cite[\S 14]{RIC1934}. In the special case relevant here, he claimed the improvement
\begin{equation*}
Y(x) \gg x\log_{2} x,
\end{equation*}
and consequently
\begin{equation*}
G(X) \gg (\log X)\log_{3} X.
\end{equation*}
This removes the factor $\log_{4} X$ from the denominator of Westzynthius's prime-gap bound, although the proof contains a gap in its classification of the surviving integers.

Ricci considers an integer-valued polynomial $F$ with no fixed prime divisor. His argument retains Westzynthius's combination of factorization restrictions, greedy residue selection, and individual covering with reserve primes. The additional ingredients are sieve estimates for arguments at which $F$ has no small prime factor, and estimates for the distribution of primes modulo which $F$ has a root.

The greedy step takes account of the polynomial's roots. Write $h(p)$ for the number of roots of $F$ modulo $p$. The candidate arguments at this stage avoid those roots and therefore occupy only $p - h(p)$ residue classes. For each prime $p$ used in this step, Ricci chooses a class containing as many of the remaining candidates as possible, leaving at most the proportion
\begin{equation*}
1 - \frac{1}{p - h(p)}.
\end{equation*}
A Chinese-remainder translation then moves each selected class onto a root class of $F$ modulo the corresponding prime \cite[pp.~202--203]{RIC1934}.

The difficulty occurs in passing from this selection to a bound for all the survivors \cite[pp.~203--204]{RIC1934}. Ricci distinguishes values composed entirely of primes less than or equal to a lower cutoff from values composed entirely of primes greater than or equal to an upper cutoff. These do not exhaust the values having no prime factor between the two cutoffs: a value may contain both small and large prime factors. Such mixed values are included in Westzynthius's large-prime-factor class, but are absent from Ricci's count. Thus the stated bound for the number of integers requiring individual covering does not follow from the argument as written.\footnote{This omission was identified by GPT-6~Astra during a review of Ricci's argument and subsequently checked by the author.}

Erd\H{o}s \cite{ERD1935} obtained the substantially stronger bound
\begin{equation*}
Y(x) \gg \frac{x\log x}{(\log_{2} x)^2},
\end{equation*}
and hence
\begin{equation*}
G(X) \gg \frac{(\log X)(\log_{2} X)}{(\log_{3} X)^2}.
\end{equation*}
Although Erd\H{o}s described his argument as an increase in the precision of the Brauer--Zeitz method, structurally it already resembles the later Erd\H{o}s--Rankin construction: the primes are divided into ranges with different roles, the surviving integers are separated according to their factorization, and the final exceptional set is made small enough to be covered one integer at a time.

Put
\begin{equation*}
z := x^{1/(20\log_{2} x)}
\end{equation*}
and consider an interval of length
\begin{equation*}
H := \frac{cx\log x}{(\log_{2} x)^2},
\end{equation*}
where $c > 0$ is sufficiently small. Split the primes $p \le x$ into the four ranges
\begin{equation*}
p \le \log x, \qquad
\log x < p \le z, \qquad
z < p \le \frac{x}{2}, \qquad
\frac{x}{2} < p \le x.
\end{equation*}
The resemblance to the preceding qualitative construction is already visible. The fixed cutoff $L$ is replaced by the growing cutoff $\log x$, while the fixed second cutoff $L^{L}$ is replaced by $z$. The four ranges otherwise play essentially the same roles: zero residue classes at the first and third stages, a sieve acting on the primes left uncovered in the second, and a final range of primes held in reserve. The new difficulty is that the first two cutoffs now grow with $x$, so the smooth survivors require a genuine uniform estimate.

For the first and third ranges Erd\H{o}s takes the zero residue class. For the second range he takes
\begin{equation*}
a_p = -1,
\end{equation*}
while the primes in the fourth range are left in reserve for the final exceptional set.

After the zero residue classes have been chosen, an uncovered composite integer $n \le H$ either has all its prime factors in the second range, or else would have a prime factor exceeding $x/2$. The latter is impossible unless $n$ is itself prime: any complementary factor would exceed $\log x$, and hence
\begin{equation*}
n > \frac{x}{2}\log x > H
\end{equation*}
for sufficiently large $x$.

Thus the remaining integers consist essentially of two types: integers composed entirely of primes in the second range, and primes exceeding $x/2$. Erd\H{o}s treats these two sets separately.

The primes left uncovered are treated using the second range. For every prime $\log x < p \le z$, Erd\H{o}s takes $a_p = -1$, so a prime $n$ is covered whenever
\begin{equation*}
n \equiv -1 \bmod p.
\end{equation*}
Thus the primes that survive this stage are precisely those for which $n + 1$ has no prime divisor in $(\log x,z]$. Heuristically, the proportion of primes escaping all these residue classes is of order
\begin{equation*}
\prod_{\log x < p \, \le \, z} \left(1 - \frac{1}{p - 1}\right)
\asymp \frac{\log_{2} x}{\log z}
\asymp \frac{(\log_{2} x)^2}{\log x},
\end{equation*}
where the final comparison uses Erd\H{o}s's choice of $z$. The product has the same order as the reduction obtained by a greedy choice of residue classes; Brun's sieve shows that, for this structured set of primes, the fixed choice $a_p = -1$ performs essentially as well.

The choice of $H$ is dictated by the number of exceptions that must be left for the reserve primes. Brun's sieve gives, schematically, at most
\begin{equation*}
\ll H\frac{(\log_{2} x)^2}{(\log x)^2}
\end{equation*}
prime exceptions. The fourth range contains $\asymp x/\log x$ primes, so in order that one unused prime may be assigned to each remaining integer it is enough to require
\begin{equation*}
H\frac{(\log_{2} x)^2}{(\log x)^2} \ll \frac{x}{\log x}.
\end{equation*}
This leads naturally to
\begin{equation*}
H \asymp \frac{x\log x}{(\log_{2} x)^2}.
\end{equation*}

The smooth-number estimate required here is still elementary. Erd\H{o}s divides the relevant integers according to whether they have at most or more than $10\log_{2} x$ distinct prime factors; in the first case they can be counted directly from their possible prime-power factors, while in the second case their divisor function is so large that a divisor-sum estimate makes them negligible. No uniform theory of smooth numbers is yet needed.

Rankin \cite{RAN1938} sharpened Erd\H{o}s's construction by allowing the smoothness threshold to grow much further. The elementary device he used, now known as Rankin's trick,\footnote{The device predates Rankin's paper: Ramar\'e \cite{RAM2022} points to its use by Heilbronn and Landau \cite[p.~10]{HL1933} and discusses its role in sieve methods.} bounds the number $\Psi(H,z)$ of $z$-smooth integers up to $H$ by
\begin{equation*}
\Psi(H,z) \le H^{\sigma} \prod_{p \, \le \, z} \left(1 - p^{-\sigma}\right)^{-1} \qquad (\sigma > 0).
\end{equation*}
Indeed, if $n \le H$, then $1 \le (H/n)^{\sigma}$; summing this inequality over the $z$-smooth integers and then removing the condition $n \le H$ produces the Euler product.

In the range needed for the construction, estimating this Euler product gives
\begin{equation*}
\Psi(H,z) \ll H(\log z)\exp(-u\log u + u), \qquad u := \frac{\log H}{\log z}.
\end{equation*}
Lemma~\ref{lem:rankin-smooth-numbers} states the precise hypotheses and gives a proof following Constantinescu \cite[Lemma~2.3]{CON2018}, using Mertens' estimates and convexity.

The point of this estimate is its uniformity as the smoothness threshold $z$ grows with $x$. It permits the second cutoff to be pushed to the scale
\begin{equation*}
\log z \asymp \frac{(\log x)(\log_{3} x)}{\log_{2} x},
\end{equation*}
while keeping the smooth-number contribution small enough for the final assignment step. Rankin proved, more precisely, that for every $\epsilon > 0$ and all sufficiently large $n$,
\begin{equation*}
Y(p_n) \ge
\left(\frac{1}{3} - \epsilon\right)
\frac{p_n(\log p_n)(\log_{3} p_n)}
     {(\log_{2} p_n)^2}.
\end{equation*}
Consequently, in the notation used here,
\begin{equation*}
Y(x) \ge
\left(\frac{1}{3} - \epsilon\right)
\frac{x(\log x)(\log_{3} x)}
     {(\log_{2} x)^2}
\end{equation*}
for all sufficiently large $x$, after adjusting $\epsilon$, and hence
\begin{equation*}
G(X) \ge
\left(\frac{1}{3} - \epsilon\right)
\frac{(\log X)(\log_{2} X)(\log_{4} X)}
     {(\log_{3} X)^2}
\end{equation*}
for all sufficiently large $X$.

The next advances left the shape of Rankin's bound unchanged and improved only its constant. Sch\"onhage \cite{SCH1963} replaced Rankin's constant $1/3$ by $e^{\gamma}/2$ through a more efficient selection of the prime moduli. In his construction the target interval has length
\begin{equation*}
H := \frac{c x(\log x)(\log_{3} x)}{(\log_{2} x)^2},
\end{equation*}
while the smoothness threshold is taken to be
\begin{equation*}
z := \exp\left(\frac{\theta(\log x)(\log_{3} x)}{\log_{2} x}\right).
\end{equation*}
The smooth-number estimate then available restricted Sch\"onhage to $\theta < 1/2$.

Rankin \cite{RAN1963} observed that de Bruijn's estimates \cite{DEB1951a, DEB1951b} allow the same argument to be run with any fixed $0 < \theta < 1$. These estimates involve Dickman's function $\rho$, the continuous function on $[0,\infty)$ determined by
\begin{equation*}
\rho(u) = 1 \quad(0 \le u \le 1), \qquad u\rho'(u) = -\rho(u - 1) \quad (u > 1).
\end{equation*}
For fixed $u > 0$, $\rho(u)$ is the limiting proportion of positive integers up to $t$ whose prime factors are all at most $t^{1/u}$, as $t \to \infty$ \cite[Theorem~7.2]{MV2006}. De Bruijn's estimates also apply when the smoothness parameter grows. In the range required here, writing
\begin{equation*}
u := \frac{\log H}{\log z},
\end{equation*}
they give
\begin{equation*}
\Psi(H,z) \sim H\rho(u), \qquad \log\rho(u) \sim -u\log u.
\end{equation*}
For the choices above,
\begin{equation*}
u \sim \frac{\log_{2}x}{\theta\log_{3}x},
\end{equation*}
and hence
\begin{equation*}
\rho(u) = (\log x)^{-1/\theta + o(1)}.
\end{equation*}
Thus
\begin{equation*}
\Psi(H,z) < \frac{H}{\log x}
\end{equation*}
for every fixed $0 < \theta < 1$ and all sufficiently large $x$. The remaining steps of Sch\"onhage's argument are unchanged, and letting $\theta$ approach $1$ raises the constant from $e^{\gamma}/2$ to $e^{\gamma}$.

\subsection{The classical construction in outline}
\label{subsec:classical-construction-outline}

We may now collect the main ingredients into a single construction, leaving the parameters unspecified. Given a prime cutoff $x$, we seek to cover $\{1,\ldots,H\}$ by one residue class modulo each prime $p \le x$, with $H$ as large as possible. A successful choice proves $Y(x) \ge H$. Choose cutoffs
\begin{equation*}
2 \le w < z < v < x,
\end{equation*}
and divide the available primes into four ranges:
\begin{equation*}
p \le w,\qquad
w < p \le z,\qquad
z < p \le v,\qquad
v < p \le x.
\end{equation*}
The primes up to $v$ form the preliminary sieve, while those in $(v,x]$ are held in reserve for mopping up. For the moment, suppose that
\begin{equation*}
H \le wv.
\end{equation*}

Clamp the primes in the first and third ranges by prescribing $a_p = 0$. Every prime factor of an uncovered integer then lies in $(w,z]$ or exceeds $v$. If an uncovered integer $n \le H$ has a prime factor $q > v$, write $n = qm$. Were $m > 1$, its prime factors would exceed $w$, giving $n > vw \ge H$. Thus $n = q$. The uncovered integers therefore consist of those whose prime factors all lie in $(w,z]$, together with primes in $(v,H]$; the first group includes $1$ and contains at most $\Psi(H,z)$ integers.

The second range reduces the number of uncovered integers further. There are two natural ways to make the choices. The first treats all uncovered integers together. For each prime $w < p \le z$, choose a residue class containing as many of the integers still remaining as possible. At least a proportion $1/p$ is removed at each step. Writing $\mathcal{N}$ for the survivor set of the completed preliminary sieve, we obtain, when $H \ge v$,
\begin{equation*}
\#\mathcal{N} \le \left(\Psi(H,z) + \pi(H) - \pi(v)\right) \prod_{w \, < \, p \, \le \, z}\left(1 - \frac1p\right).
\end{equation*}
The same bound follows by choosing the residue classes independently and uniformly at random: each integer left uncovered after clamping remains uncovered with probability equal to the displayed product, so some choice leaves no more than the expected number.

Alternatively, one may direct the second stage at the uncovered primes alone. These are all greater than $v$, and hence occupy only nonzero residue classes modulo each prime $p \le z$. A greedy choice among those classes removes at least a proportion $1/(p - 1)$ of the primes still remaining. Leaving the smooth contribution unreduced in the bound gives instead
\begin{equation*}
\#\mathcal{N} \le \Psi(H,z) + \left(\pi(H) - \pi(v)\right) \prod_{w \, < \, p \, \le \, z}\left(1 - \frac{1}{p - 1}\right).
\end{equation*}
Here the corresponding probabilistic choice is uniform among the nonzero residue classes. The improvement in the reduction of the prime survivors comes at the cost of leaving the smooth contribution unreduced in the bound.

Both versions suffice to obtain the classical Erd\H{o}s--Rankin order, with suitable choices of parameters. As in Erd\H{o}s's argument, a suitable sieve estimate can also justify a fixed choice of nonzero residue classes.

The primes in $(v,x]$ have been held in reserve. If
\begin{equation*}
\#\mathcal{N} \le \pi(x) - \pi(v),
\end{equation*}
assign each remaining integer $n$ a distinct reserve prime $p$ and choose $a_p \equiv n \bmod p$. Every integer is now covered.

The parameters balance the two contributions to the survivor bound. Increasing $z$ supplies more primes for the second stage, but also enlarges the smooth-number contribution. Increasing $w$ permits a larger $H$ under the condition $H \le wv$, but shortens the second range. Increasing $v$ likewise relaxes that condition, while leaving fewer primes in reserve. The condition itself is sufficient to make every nonsmooth survivor prime; it is not intrinsic to the covering problem. Without it, additional composite survivors must be taken into account.

This outline brings together features of the preceding arguments: the separation by factorization and final individual covering in the qualitative construction, the greedy selection used by Ricci, the growing cutoffs and sieve estimate in Erd\H{o}s's argument, and the smooth-number estimates sharpened by Rankin and de Bruijn. The classical final step requires only that the survivors be few enough. The next developments exploit their arrangement to cover more than one with each reserve prime.

\subsection{Short intervals and more efficient use of the reserve primes}
\label{subsec:efficient-use-reserve-primes}
A related development concerns intervals containing unusually few or unusually many primes, rather than none at all. Maier \cite{MAI1985} showed that the prime-number-theorem heuristic cannot hold uniformly even for intervals of polylogarithmic length. More precisely, for every fixed $\lambda > 1$ there is a constant $\eta = \eta(\lambda) > 0$ such that, for arbitrarily large $x$,
\begin{equation*}
\pi\left(x + (\log x)^\lambda\right) - \pi(x)
<
(1 - \eta)(\log x)^{\lambda - 1},
\end{equation*}
while for arbitrarily large $x$ the reverse fluctuation
\begin{equation*}
\pi\left(x + (\log x)^\lambda\right) - \pi(x)
>
(1 + \eta)(\log x)^{\lambda - 1}
\end{equation*}
also occurs.

By this time the shape of Rankin's bound had resisted improvement for nearly half a century. Erd\H{o}s \cite{ERD1986} offered \$10,000 for a proof that the constant in Rankin's bound could be taken arbitrarily large.

Maier and Pomerance \cite{MP1990} introduced a new refinement of the final use of the reserve primes. Their arithmetic input, an averaged form of a generalized twin-prime problem, supplies many pairs of surviving integers congruent modulo a reserve prime $p$. Finding such pairs separately for many primes is not enough: the same integers might occur repeatedly. A graph-theoretic matching argument allows disjoint pairs to be assigned to distinct reserve primes, for a positive proportion of those primes. Each selected prime therefore covers two previously uncovered integers. This raised the constant in Rankin's bound from $e^{\gamma}$ to
\begin{equation*}
c_{0}e^{\gamma},
\end{equation*}
where $c_{0} = 1.31256\ldots$ is the positive solution of
\begin{equation*}
\frac{4}{c_{0}} - e^{-4/c_{0}} = 3.
\end{equation*}

Pintz \cite{PIN1997} strengthened the combinatorial step by a probabilistic argument, obtaining an essentially optimal matching in which almost all of the relevant reserve primes cover two survivors. He thereby raised the constant further to $2e^{\gamma}$.

Maier and Stewart \cite{MS2007} later gave a result interpolating between these short-interval irregularities and the classical construction of a prime-free interval. In particular, for every $\epsilon > 0$ there are arbitrarily large $x$ for which the interval $(x,x + \log x\log_{2} x]$ contains at most
\begin{equation*}
(1 + \epsilon)\frac{(\log_{3}x)^2}{\log_{4}x}
\end{equation*}
primes. Consequently, after an arbitrarily small adjustment of $\epsilon$, the prime gaps meeting such an interval have average length at least
\begin{equation*}
(1 - \epsilon)\frac{(\log x)(\log_{2}x)(\log_{4}x)}{(\log_{3}x)^2}.
\end{equation*}
In particular, at least one of these gaps has Erd\H{o}s--Rankin size. Thus the classical large-gap phenomenon sits inside a broader irregularity: on suitable intervals substantially longer than a single exceptional gap, primes may occur with an average spacing of the same order as the largest gaps produced by the Erd\H{o}s--Rankin construction.

\subsection{The 2014 breakthrough: many survivors per prime}
\label{subsec:many-survivors-per-prime}
In 2014, seventy-six years after Rankin's paper, the order of his bound was finally surpassed. Maynard \cite{MAY2016} and, independently, Ford, Green, Konyagin and Tao \cite{FGKT2016} proved that the constant multiplying the Erd\H{o}s--Rankin scale can be made arbitrarily large. Both extended the principle of Maier and Pomerance beyond pairs, using residue classes containing many surviving primes. Their arithmetic inputs differed: Ford, Green, Konyagin and Tao used results on arithmetic progressions of primes, while Maynard used multidimensional sieve weights to find classes containing many primes.

Later that year, Ford, Green, Konyagin, Maynard and Tao combined these ideas with an efficient hypergraph covering argument \cite{FGKMT2018}. They proved
\begin{equation*}
G(X) \gg
\frac{(\log X)(\log_{2} X)(\log_{4} X)}
     {\log_{3} X}.
\end{equation*}
Their construction gives, at the level of the finite covering problem,
\begin{equation*}
Y(x) \gg
\frac{x(\log x)(\log_{3} x)}
     {\log_{2} x}.
\end{equation*}
Here the combinatorial selection becomes decisive. Regard the survivors as vertices and the sets covered by individual residue classes as edges of a hypergraph. One must choose at most one class for each reserve prime, while keeping overlaps small enough that most of the coverage is useful. The authors achieve this by adapting the R\"odl nibble method underlying a theorem of Pippenger and Spencer \cite{PS1989}: classes are selected in successive small random rounds, with later choices adjusted to account for earlier coverage. Their version accommodates classes containing different numbers of survivors and reduces the overlap losses in the earlier independent random selections.

\subsection{Structured survivors and short translates}
\label{subsec:structured-survivors}
Two recent unrefereed manuscripts, discussed on the Erdős Problems forum \cite{EP4DISC}, claim further improvements, and both depart more sharply from the preceding Erdős--Rankin strategy. Two features had persisted through the earlier constructions. First, the preliminary sieve was designed largely to leave as few survivors as possible, with composite survivors treated as an exceptional set to be made negligible. Second, the construction remained within the finite covering problem $Y(x)$: every divisibility condition ultimately came from one chosen residue class modulo a prime $p \le x$.

The first manuscript, attributed to GPT-5.6~Sol \cite{GPT2026}, relaxes the first of these principles. Its preliminary random sieve gives a bias to the zero residue, chosen so that the survival probabilities of rough composites retain a particularly simple multiplicative structure. This makes it possible to treat the surviving composites as a resource rather than merely as an exceptional set to be eliminated: they can be organized into residue-class blocks and many of them covered at once. The surviving primes are handled separately using the Maynard weight and the hypergraph covering machinery of Ford, Green, Konyagin, Maynard and Tao. In this sense, the number of preliminary survivors is no longer the only quantity to optimize: a somewhat larger but more structured survivor set may be easier to cover than a smaller unstructured one. The manuscript claims
\begin{equation*}
Y(x) \gg \frac{x\log x}{\log_{3}x}, \qquad G(X) \gg \frac{(\log X)(\log_{2}X)}{\log_{4}X}.
\end{equation*}

Theorem~\ref{thm:tilted-covering-bound} isolates the contribution of the tilt and its associated covering of composite survivors. Covering the surviving primes individually already gives
\begin{equation*}
Y(x) \gg \frac{x\log x}{(\log_{2} x)\log_{3} x},
\end{equation*}
which exceeds the classical Erd\H{o}s--Rankin scale by a factor of $(\log_{2} x)/(\log_{3} x)^{2}$, although it remains below the Ford--Green--Konyagin--Maynard--Tao scale. Thus the structured treatment of composite survivors yields an unbounded improvement over the classical bound without Maynard sieve weights or a hypergraph covering theorem. This separates its contribution from that of covering prime survivors in groups.

A second manuscript released by OpenAI \cite{OAI2026}, which attributes the proof to GPT-6~Astra, breaks the second restriction instead. It begins with an essentially classical Erd\H{o}s--Rankin sieve using primes up to $x$, leaving a sparse set $S$ of uncovered integers. After fixing the resulting congruences modulo $P(x)$, it varies the resulting interval through translates by multiples of $P(x)$. These translations preserve all of the divisibility conditions already imposed by the primes $p \le x$, while auxiliary primes larger than $x$ are used to make every remaining integer composite simultaneously. The manuscript thereby claims
\begin{equation*}
G(X) \gg
\frac{(\log X)(\log_{2}X)^2(\log_{4}X)}
     {(\log_{3}X)^2}.
\end{equation*}
This is therefore not an improved lower bound for the classical covering function $Y(x)$. Sol remains within that covering problem but changes the structure of the survivor set; Astra retains a more classical preliminary sieve but escapes the restriction to primes at most $x$ by exploiting an additional translation parameter and auxiliary larger primes.

\subsection{Conjectures, computation, and quantitative summary}
\label{subsec:cramer-numerics}
The results surveyed above give lower bounds for the largest prime gaps. A conjectural benchmark is the quadratic-logarithmic scale: Cram\'er's conjecture is commonly stated as the upper bound
\begin{equation*}
G(X) \ll (\log X)^2,
\end{equation*}
and this scale is widely expected to describe the true order of magnitude of $G(X)$. Even the preceding claimed improvements remain far below it.

The leading constant is less clear. Cram\'er's probabilistic model \cite{CRA1936} suggests the stronger prediction
\begin{equation*}
\limsup_{X \to \infty} \frac{G(X)}{(\log X)^2} = 1.
\end{equation*}
The model selects each integer $n \ge 3$ independently with probability $1/\log n$ to represent a prime. It captures many features of prime spacings remarkably well, but it does not fully reflect the congruence restrictions imposed by the small primes. Granville \cite{GRA1995} showed that taking those restrictions into account changes the prediction for the most extreme gaps: the same heuristic reasoning points instead to infinitely many gaps of size at least
\begin{equation*}
\left(2e^{-\gamma} + o(1)\right)(\log p)^2.
\end{equation*}
Here
\begin{equation*}
e^{-\gamma} = 0.561459\ldots, \qquad 2e^{-\gamma} = 1.122918\ldots.
\end{equation*}
This challenges the constant $1$ in the stronger prediction, while remaining compatible with a quadratic-logarithmic order of magnitude.

A conditional route to gaps of this order is supplied by Banks, Ford and Tao \cite{BFT2023}. They show that a sufficiently strong averaged form of the Hardy--Littlewood prime tuples conjecture implies
\begin{equation*}
G(X) \gg (\log X)^2.
\end{equation*}
Their argument deduces the existence of large gaps from estimates for prime tuples, without first obtaining a corresponding lower bound for $Y(x)$.

Their probabilistic model also relates the predicted size of the largest gaps to an extremal interval-sieve problem. Under a further conjecture about that problem, it predicts
\begin{equation*}
G(X) \sim 2e^{-\gamma}(\log X)^2.
\end{equation*}
The precise asymptotic therefore remains uncertain.

A striking numerical example was reported by Nyman and Nicely \cite{NN2003}: the gap of length $1132$ following the prime $p = 1693182318746371$ gives
\begin{equation*}
\frac{1132}{(\log p)^2}  =0.9206385885\ldots.
\end{equation*}
Even this gap falls short of $(\log p)^2$, and further still of $2e^{-\gamma}(\log p)^2$. Such computations illustrate the scale of the conjectures, though finite data cannot decide between their asymptotic predictions.

The enormous distance between what is proved, what has been observed computationally, and what is conjectured is one reason the subject has remained fertile. The problem begins with the elementary observation that consecutive composite numbers can be manufactured by congruences, but making those congruences efficient enough to produce a record gap has repeatedly required new ideas from sieve theory, the distribution of primes, probabilistic combinatorics, and the arithmetic of smooth numbers.

The progression of the main quantitative bounds is summarized in Table~\ref{tab:large-prime-gap-history}. To keep the formulas compact, put
\begin{equation*}
\mathcal{R}(t) := \frac{t(\log t)(\log_{3} t)}{(\log_{2} t)^2}.
\end{equation*}
(Here $\mathcal{R}$ denotes a function, unrelated to the set of reserve primes denoted by $\mathcal{R}$ in the main text.) Thus $\mathcal{R}(x)$ is the classical Rankin scale for the covering function, while
\begin{equation*}
\mathcal{R}(\log X) = \frac{(\log X)(\log_{2} X)(\log_{4} X)}{(\log_{3} X)^2}
\end{equation*}
is the corresponding Erd\H{o}s--Rankin scale for prime gaps.

\clearpage
\begin{landscape}
\thispagestyle{empty}
\landscapepagenumber

\makeatletter
\setlength{\@fptop}{0pt}
\setlength{\@fpbot}{0pt plus 1fil}
\makeatother

\begin{table}[p]
\vspace*{-15mm}
\centering
\small
\setlength{\tabcolsep}{4pt}
\renewcommand{\arraystretch}{1.2}

\caption{Successive lower bounds for long gaps between consecutive primes and for the associated covering problem.}
\label{tab:large-prime-gap-history}

\begin{tabularx}{\linewidth}{
  @{}
  r
  L{0.185\linewidth}
  L{0.185\linewidth}
  L{0.225\linewidth}
  Z
  @{}
}
\toprule
Year
&
Author(s)
&
$Y(x)$
&
$G(X)$
&
Main advance
\\
\midrule

1830
&
Legendre \cite{LEG1830}
&
$\ge (2-\epsilon)x$
&
---
&
Constructed $Y(p_n)\ge 2p_{n-1}-1$; conjectured this essentially optimal.
\\

1929
&
Backlund \cite{BAC1929}
&
$\ge (2-\epsilon)x$
&
$\ge (2-\epsilon)\log X$
&
Combined Legendre's covering bound with the PNT; first factor $>1$
beyond the average gap.
\\

1930
&
Brauer--Zeitz \cite{BZ1930,ZEI1930}
&
$\ge (4-\epsilon)x$
&
$\ge (4-\epsilon)\log X$
&
Uniform counterexamples to Legendre’s proposed extremality; improved $2$ to $4$.
\\

1931
&
Westzynthius \cite{WES1931}
&
$\ge (2e^{\gamma} - \epsilon)x\log_{2} x / \log_{3} x$
&
$\ge (2e^{\gamma} - \epsilon)(\log X)(\log_{3} X)/\log_{4} X$
&
Greedy sieving with a separate smooth-number estimate.
\\

1934
&
Ricci \cite{RIC1934} (claimed)
&
$\gg x\log_{2} x$
&
$\gg (\log X)(\log_{3} X)$
&
Polynomial extension using Brun's sieve.
\\

1935
&
Erd\H{o}s \cite{ERD1935}
&
$\gg x\log x/(\log_{2} x)^2$
&
$\gg (\log X)(\log_{2} X)/(\log_{3} X)^2$
&
Prime ranges, Brun sieve and sharper control of smooth survivors.
\\

1938
&
Rankin \cite{RAN1938}
&
$\ge \left(\frac{1}{3}-\epsilon\right)\mathcal{R}(x)$
&
$\ge \left(\frac{1}{3}-\epsilon\right)\mathcal{R}(\log X)$
&
Rankin's trick for bounding smooth numbers.
\\

1963
&
Sch\"onhage \cite{SCH1963}
&
$\ge \left(\frac{e^{\gamma}}{2}-\epsilon\right)\mathcal{R}(x)$
&
$\ge \left(\frac{e^{\gamma}}{2}-\epsilon\right)\mathcal{R}(\log X)$
&
More efficient selection of the prime moduli.
\\

1963
&
Rankin \cite{RAN1963}
&
$\ge \left(e^{\gamma}-\epsilon\right)\mathcal{R}(x)$
&
$\ge \left(e^{\gamma}-\epsilon\right)\mathcal{R}(\log X)$
&
de Bruijn's sharper smooth-number estimates.
\\

1990
&
Maier--Pomerance \cite{MP1990}
&
$\ge \left(c_0 e^{\gamma}-\epsilon\right)\mathcal{R}(x)$
&
$\ge \left(c_0 e^{\gamma}-\epsilon\right)\mathcal{R}(\log X)$
&
Pairs of survivors covered by one large prime.
\\

1997
&
Pintz \cite{PIN1997}
&
$\ge \left(2e^{\gamma}-\epsilon\right)\mathcal{R}(x)$
&
$\ge \left(2e^{\gamma}-\epsilon\right)\mathcal{R}(\log X)$
&
Near-perfect matching of survivors.
\\

2016
&
Maynard \cite{MAY2016}
&
$Y(x)/\mathcal{R}(x) \to \infty$
&
$G(X)/\mathcal{R}(\log X) \to \infty$
&
Maynard sieve weights in the final covering stage.
\\

2016
&
Ford--\allowbreak Green--\allowbreak Konyagin--\allowbreak Tao \cite{FGKT2016}
&
$Y(x)/\mathcal{R}(x) \to \infty$
&
$G(X)/\mathcal{R}(\log X) \to \infty$
&
Linear configurations of primes.
\\

2018
&
Ford--\allowbreak Green--\allowbreak Konyagin--\allowbreak
Maynard--\allowbreak Tao \cite{FGKMT2018}
&
$\gg \mathcal{R}(x)\log_{2} x$
&
$\gg \mathcal{R}(\log X)\log_{3} X$
&
Explicit iterated-logarithmic gain; efficient hypergraph covering.
\\

2026
&
GPT-5.6~Sol \cite{GPT2026}\textsuperscript{\(\dagger\)}
&
$\gg
\mathcal{R}(x)
\left(\log_{2} x / \log_{3} x\right)^2$
&
$\gg
\mathcal{R}(\log X)
\left(\log_{3} X / \log_{4} X \right)^2$
&
Tilted preliminary sieve; separate composite and prime layers.
\\

2026
&
OpenAI (GPT-6~Astra)
\cite{OAI2026}\textsuperscript{\(\dagger\)}
&
---
&
$\gg \mathcal{R}(\log X)\log_{2} X$
&
Short-translates theorem replaces the one-prime-per-survivor final stage.
\\

\bottomrule
\end{tabularx}

\medskip

\begin{minipage}{\linewidth}
\footnotesize
The years in the table are publication years; the Maynard and Ford--Green--Konyagin--Tao results, as well as the subsequent five-author work, first appeared as preprints in 2014. All asymptotic bounds in the table are understood to hold for all sufficiently large $x$ or $X$, as appropriate. Here $\epsilon > 0$ is arbitrary, $\gamma = 0.577215\ldots$ is the Euler--Mascheroni constant ($e^{\gamma} = 1.781072\ldots$), and $c_0 = 1.312560\ldots$ is the positive solution of $(4/c_0) - e^{-4/c_0} = 3$. The $Y(x)$ column uses the modern covering-function formulation; this was not necessarily the formulation of the original results. In particular, the 1929 covering bound is Legendre's earlier construction, which Backlund transferred to prime gaps using the prime number theorem. The symbol $\dagger$ marks an unrefereed claim. The 2026 Astra result is left blank in the $Y(x)$ column because its short-translates step uses additional arithmetic beyond the classical problem of choosing one residue class modulo each prime $p \le x$.
\end{minipage}

\end{table}

\end{landscape}

\clearpage 

\section{AI assistance, provenance and verification}
\label{app:ai-provenance}

The theorem, proof and much of the exposition in this paper were generated through an extended dialogue with ChatGPT (OpenAI; model displayed as GPT-6~Astra), drawing on the tilted sieve of GPT-5.6~Sol \cite{GPT2026}. The author posed the questions, supplied source material, worked through the proposed arguments, and shaped the exposition. He takes responsibility for the paper but does not claim the theorem or proof as his own mathematical discovery. The purpose of the paper is to make this part of the tilted sieve accessible to other readers.

\subsection{Mathematical development}
\label{subsec:provenance-of-research-contributions}

This paper grew out of discussions for a related project, still in preparation, concerning the classical Erd\H{o}s--Rankin construction and the use of probability distributions to shape the integers left by a preliminary sieve. The author asked what could be obtained from GPT-5.6~Sol's tilt and covering of composite survivors without using Maynard sieve weights or a hypergraph covering theorem. The request specified the bound stated in Theorem~\ref{thm:tilted-covering-bound} and a proposed strategy developed in the preceding discussions. GPT-6~Astra supplied a detailed proof and adapted the construction to establish that bound.

The underlying power tilt and inverse-probability selection of composite groups come from \cite{GPT2026}. The model adapted these ideas to a shorter interval, chose parameters, and supplied the estimates and the assembly of the covering. At this interval length, whole residue classes of composite candidates can serve as groups, and prime survivors can be covered individually. These contributions are credited to the model as an adaptation of GPT-5.6~Sol's method; no claim of historical novelty rests on the model's description of its output.

The author subsequently worked through the argument in detail. He requested explanations of probability identities, local expansions, divisor counts, correlation estimates, and the choice of parameters, and checked calculations as the exposition developed. He identified ambiguities and missing explanations, revised terminology and notation, and directed the organization of the proof. The model supplied successive drafts and elaborations in response. The finished presentation therefore incorporates many exchanges beyond the initial proposed proof.

\subsection{Exposition and appendices}
\label{subsec:scope-of-assistance}

The same assistance was used in preparing the historical survey, the expositions of the classical Erd\H{o}s--Rankin construction, and the general probabilistic framework. It included locating and comparing references, discussing mathematical arguments, drafting proofs and explanatory passages, suggesting notation and organization, and identifying possible errors or omissions. Its role extended well beyond language editing.

In reviewing the historical sources, GPT-6~Astra also identified the omitted mixed-factor case in Ricci's survivor count \cite[pp.~203--204]{RIC1934}, discussed in the historical appendix. The author subsequently checked this observation against Ricci's argument.

The author supplied sources, questioned proposed accounts and arguments, selected the material to retain, and revised the text to suit the intended audience. Descriptions of established results are attributed to the mathematical sources cited in the paper. This declaration was also drafted with AI assistance.

\subsection{Verification and responsibility}
\label{subsec:verification-and-responsibility}

The author's checking included working through the calculations and deductions in the main proof and requesting intermediate steps where the reasoning was unclear. Model-assisted discussion formed part of that process. Agreement by a model with an argument is not itself evidence of correctness; the justification for the results is the proof presented in the paper.

No language model is listed as an author. The named author takes full responsibility for the mathematical claims, exposition, citations, and other contents of the paper. This responsibility is distinct from the attribution of the mathematical development described above.

\subsection{Supporting records}
\label{subsec:supporting-records}

The ancillary files \texttt{prompt.txt} and \texttt{response.txt} contain plain-text extracts of the initial request for the simplified construction and the model's response. Editorial notes describe the changes made to prepare these extracts, including the omission of formatting instructions and references to unpublished work. The response retains its original mathematical content and presentation, apart from the removal of references to lemmas outside the extract. No corrections or improvements from the subsequent revisions have been incorporated.

The request followed earlier mathematical discussion and already specified the target bound and a proposed proof strategy. These files therefore document a starting point for the present exposition, not the full development of its underlying ideas. The extended discussions, checking, and revisions that followed are not reproduced. Readers may use the prompt to explore similar questions themselves, but the outcome depends on the model, the supplied context, and the particular run; the files do not establish that the same argument can be regenerated from the prompt alone.


\begin{thebibliography}{99}

\bibitem{BAC1929}
Backlund,~R.~J.
``\"Uber die Differenzen zwischen den Zahlen, die zu den $n$ ersten Primzahlen teilerfremd sind.''
{\em Ann.~Acad.~Sci.~Fenn.~Ser.~A}\ 32(2):1--9, 1929.
\href{https://zbmath.org/55.0687.01}{JFM:55.0687.01}

\bibitem{BFT2023}
Banks,~W., K.~Ford and T.~Tao.
``Large prime gaps and probabilistic models.''
{\em Invent.~Math.}\ 233(3):1471--1518, 2023.
\DOI{10.1007/s00222-023-01199-0}

\bibitem{BZ1930}
Brauer,~A. and H.~Zeitz.
``\"Uber eine zahlentheoretische Behauptung von Legendre.''
{\em Sitzungsber.~Berliner Math.~Ges.}\ 29:116--125, 1930.
\href{https://zbmath.org/56.0156.02}{JFM:56.0156.02}

\bibitem{BRU1920}
Brun,~V.
``Le crible d'Eratosthène et le théorème de Goldbach.''
{\em Videnskapsselskapets Skrifter. I. Mat.-naturv. Klasse}, 1920, no.~3, pp.~1--36.
\url{https://archive.org/details/lecriblederatost00brun}

\bibitem{CON2018}
Constantinescu,~P.
``Large gaps between primes.''
First-year project, London School of Geometry and Number Theory, 2018, 28~pp.
\href{https://petruconstantinescu.github.io/large_gaps.pdf}{\nolinkurl{petruconstantinescu.github.io/large_gaps.pdf}}

\bibitem{CRA1936}
Cram\'er,~H.
``On the order of magnitude of the difference between consecutive prime numbers.''
{\em Acta Arith.}\ 2(1):23--46, 1936.
\DOI{10.4064/aa-2-1-23-46}

\bibitem{DEB1951a}
de Bruijn,~N.~G.
``The asymptotic behaviour of a function occurring in the theory of primes.''
{\em J.~Indian Math.~Soc.~\textup{(}N.S.\textup{)}} 15:25--32, 1951.

\bibitem{DEB1951b}
de Bruijn,~N.~G.
``On the number of positive integers $\le x$ and free of prime factors $> y$.''
{\em Proc.~Kon.~Ned.~Akad.~Wetensch.~Ser.~A}\ 54(1):50--60, 1951.
\DOI{10.1016/S1385-7258(51)50008-2}

\bibitem{DIR1837}
Dirichlet,~P.~G.~L.
``Beweis des Satzes, dass jede unbegrenzte arithmetische Progression, deren erstes Glied und Differenz ganze Zahlen ohne gemeinschaftlichen Factor sind, unendlich viele Primzahlen enth\"alt.''
1837. Reprinted in R.~Dedekind (ed.),
{\em Vorlesungen \"uber Zahlentheorie}, pp.~342--359, 1879;
reissued by Cambridge University Press, Cambridge, 2013,
\DOI{10.1017/CBO9781139237321.012}

\bibitem{DUP1859}
Dupr\'e,~A.
{\em Examen d'une proposition de Legendre relative \`a la th\'eorie des nombres}.
Mallet-Bachelier, Paris, 1859.
\href{https://gallica.bnf.fr/ark:/12148/bpt6k939379s}%
     {\nolinkurl{gallica.bnf.fr/ark:/12148/bpt6k939379s}}

\bibitem{EP4DISC}
Erd\H{o}s Problems.
\href{https://www.erdosproblems.com/forum/thread/4/proof-claims}%
   {``Proof claims.''}
Discussion thread for Erd\H{o}s Problem~\#4, maintained by T.~F.~Bloom,
accessed 13~September~2026; archived at
\href{https://web.archive.org/web/20260914015658/https://www.erdosproblems.com/forum/thread/4/proof-claims}%
   {\nolinkurl{web.archive.org}}.

\bibitem{ERD1935}
Erd\H{o}s,~P.
``On the difference of consecutive primes.''
{\em Quart.~J.~Math.~Oxford Ser.}\ os-6(1):124--128, 1935.
\DOI{10.1093/qmath/os-6.1.124}

\bibitem{ERD1986}
Erd\H{o}s,~P.
``Some problems on number theory.''
In {\em Proceedings of the Seventeenth Southeastern International
Conference on Combinatorics, Graph Theory, and Computing}
(Boca Raton, FL, 1986),
{\em Congr.~Numer.}\ 54:225--244, 1986.
\href{https://users.renyi.hu/~p_erdos/1986-16.pdf}%
   {\texttt{users.renyi.hu/\textasciitilde p\_erdos/1986-16.pdf}}

\bibitem{FGKT2016}
Ford,~K., B.~Green, S.~Konyagin and T.~Tao.
``Large gaps between consecutive prime numbers.''
{\em Ann.~of Math.~\textup{(}2\textup{)}} 183(3):935--974, 2016.
\DOI{10.4007/annals.2016.183.3.4}

\bibitem{FGKMT2018}
Ford,~K., B.~Green, S.~Konyagin, J.~Maynard and T.~Tao.
``Long gaps between primes.''
{\em J.~Amer.~Math.~Soc.}\ 31(1):65--105, 2018.
\DOI{10.1090/jams/876}

\bibitem{GPT2026}
GPT-5.6~Sol.
\href{https://github.com/DottedCalculator/ai-math/blob/main/Erdos_4_GPT_5.6_Sol.pdf}%
   {``A tilted residue-class construction for long prime-free
   intervals.''}
Preprint, 48~pp., 25~August~2026; archived at
\href{https://web.archive.org/web/20260914014847/https://github.com/DottedCalculator/ai-math/blob/main/Erdos_4_GPT_5.6_Sol.pdf}%
   {\nolinkurl{web.archive.org}}.
Lean~4 formalization by B.~Alexeev at
\href{https://github.com/plby/lean-proofs/blob/main/src/latest/ErdosProblems/Erdos4Tilted.lean}%
   {\nolinkurl{github.com/plby/lean-proofs}}.

\bibitem{GRA1995}
Granville,~A.
``Harald Cram\'er and the distribution of prime numbers.''
{\em Scand.~Actuar.~J.}\ 1995(1):12--28, 1995.
\DOI{10.1080/03461238.1995.10413946}

\bibitem{HL1933}
Heilbronn,~H. and E.~Landau.
``Bemerkungen zur vorstehenden Arbeit von Herrn Bochner.''
{\em Math.~Z.}\ 37:10--16, 1933.
\DOI{10.1007/BF01474553}

\bibitem{LEG1830}
Legendre,~A.-M.
{\em Th\'eorie des nombres}.
3rd ed., vol.~2, Firmin Didot fr\`eres, Paris, 1830, pp.~71--79.
\DOI{10.3931/e-rara-59737}

\bibitem{MAI1985}
Maier,~H.
``Primes in short intervals.''
{\em Michigan Math.~J.}\ 32(2):221--225, 1985.
\DOI{10.1307/mmj/1029003189}

\bibitem{MP1990}
Maier,~H. and C.~Pomerance.
``Unusually large gaps between consecutive primes.''
{\em Trans.~Amer.~Math.~Soc.}\ 322(1):201--237, 1990.
\DOI{10.1090/S0002-9947-1990-0972703-X}

\bibitem{MS2007}
Maier,~H. and C.~L.~Stewart.
``On intervals with few prime numbers.''
{\em J.~Reine Angew.~Math.}\ 608:183--199, 2007.
\DOI{10.1515/CRELLE.2007.057}

\bibitem{MAY2016}
Maynard,~J.
``Large gaps between primes.''
{\em Ann.~of Math.~\textup{(}2\textup{)}} 183(3):915--933, 2016.
\DOI{10.4007/annals.2016.183.3.3}

\bibitem{MV2006}
Montgomery,~H.~L. and R.~C.~Vaughan.
{\em Multiplicative number theory I: Classical theory}.
Cambridge Studies in Advanced Mathematics, vol.~97,
Cambridge University Press, Cambridge, 2006.
\DOI{10.1017/CBO9780511618314}

\bibitem{NN2003}
Nyman,~B. and T.~R.~Nicely.
``New prime gaps between $10^{15}$ and $5\times10^{16}$.''
{\em J.~Integer Seq.}\ 6(3):Article 03.3.1, 6~pp., 2003.
\href{https://cs.uwaterloo.ca/journals/JIS/VOL6/Nicely/nicely2.html}%
     {\nolinkurl{cs.uwaterloo.ca/journals/JIS/VOL6/Nicely/nicely2.html}}

\bibitem{OAI2026}
OpenAI.
\href{https://cdn.openai.com/pdf/51126fac-1b68-4128-9666-c908bcc16033/long_gaps.pdf}%
   {``Improved long gaps between primes.''}
Preprint, 8~pp., 2026; archived at
\href{https://web.archive.org/web/20260908140915/https://cdn.openai.com/pdf/51126fac-1b68-4128-9666-c908bcc16033/long_gaps.pdf}%
   {\nolinkurl{web.archive.org}}.
Lean~4 formalization at
\href{https://github.com/openai/LongGapsBetweenPrimes}%
   {\nolinkurl{github.com/openai/LongGapsBetweenPrimes}}.
The paper attributes the proof to GPT-6~Astra.

\bibitem{PIN1997}
Pintz,~J.
``Very large gaps between consecutive primes.''
{\em J.~Number Theory}\ 63(2):286--301, 1997.
\DOI{10.1006/jnth.1997.2081}

\bibitem{PS1989}
Pippenger,~N. and J.~Spencer.
``Asymptotic behavior of the chromatic index for hypergraphs.''
{\em Journal of Combinatorial Theory, Series A} 51(1):24--42, 1989.
\DOI{10.1016/0097-3165(89)90074-5}

\bibitem{POM1989}
Pomerance,~C.
``Two methods in elementary analytic number theory.''
In {\em Number theory and applications}, R.~A.~Mollin (ed.), pp.~135--161.
NATO Adv.~Sci.~Inst.~Ser.~C: Math.~Phys.~Sci.~265.
Kluwer Academic Publishers, Dordrecht, 1989.

\bibitem{RAM2022}
Ramar\'e,~O.
``Rankin's trick and Brun's sieve.''
In {\em Excursions in multiplicative number theory}, pp.~251--258.
Birkh\"auser Advanced Texts Basler Lehrb\"ucher, Birkh\"auser, Cham, 2022.
\href{https://doi.org/10.1007/978-3-030-73169-4_25}%
     {doi:10.1007/978-3-030-73169-4\_25}

\bibitem{RAN1938}
Rankin,~R.~A.
``The difference between consecutive prime numbers.''
{\em J.~London Math.~Soc.}\ s1-13(4):242--247, 1938.
\DOI{10.1112/jlms/s1-13.4.242}

\bibitem{RAN1963}
Rankin,~R.~A.
``The difference between consecutive prime numbers~V.''
{\em Proc.~Edinburgh Math.~Soc.~\textup{(}2\textup{)}} 13(4):331--332, 1963.
\DOI{10.1017/S0013091500025633}

\bibitem{RIC1934}
Ricci,~G.
``Ricerche aritmetiche sui polinomi, II. (Intorno a una proposizione non vera di Legendre).''
{\em Rend.~Circ.~Mat.~Palermo}\ 58:190--208, 1934.
\DOI{10.1007/BF03019710}

\bibitem{SCH1963}
Sch\"onhage,~A.
``Eine Bemerkung zur Konstruktion grosser Primzahll\"ucken.''
{\em Arch.~Math.} 14:29--30, 1963.
\DOI{10.1007/BF01234916}

\bibitem{WES1931}
Westzynthius,~E.
``\"Uber die Verteilung der Zahlen, die zu den $n$ ersten Primzahlen teilerfremd sind.''
{\em Comment.~Phys.-Math.}\ 5(25):1--37, 1931.
\href{https://zbmath.org/0003.24601}{Zbl:0003.24601}

\bibitem{ZEI1930}
Zeitz,~H.
{\em Elementare Betrachtung \"uber eine zahlentheoretische Behauptung
von Legendre}.
Privatdruck, Berlin, 1930.

\end{thebibliography}
\end{document}